\theoremstyle{definition}
\newtheorem{definition}{Definition}[section]
\newtheorem{problem}[definition]{Problem}
\newtheorem{question}[definition]{Question}
\theoremstyle{plain}
\newtheorem{corollary}[definition]{Corollary}
\newtheorem{lemma}[definition]{Lemma}
\newtheorem{proposition}[definition]{Proposition}
\newtheorem{theorem}[definition]{Theorem}
\numberwithin{equation}{section}
\tikzset{
	vertex/.style={
		circle,
		minimum size=1.8mm,
		fill,
		inner sep=0,
		outer sep=0,
	},
	edge/.style={
		line width=.25mm,
	}
}
\tikzset{
	x=10mm,
	y=10mm,
	treenode/.style={
		circle,
		minimum size=1mm,
		inner sep=0,
		fill=gray,
	},
	edgelabel/.style={
		node font=\small,
		inner sep =.5mm,
	},
	columnlabel/.style={
		anchor=north,
	}
}
\newcommand*\defterm{\emph}
\DeclareMathOperator{\dom}{Dom}
\DeclareMathOperator{\im}{Im}
\newcommand*\girth[1]{\mathrm{girth}(#1)}
\newcommand*\knitdegree[1]{\mathrm{kd}(#1)}
\newcommand*\cliquenumber[2][]{\omega\parens[#1]{#2}}
\newcommand*\chromaticnumber[2][]{\chi\parens[#1]{#2}}
\newcommand*\commgraph[1]{\mathcal{G}(#1)}
\newcommand*\centre[1]{Z(#1)}
\newcommand*\idemp[1]{E(#1)}
\DeclarePairedDelimiter{\abs}{\lvert}{\rvert}
\DeclarePairedDelimiter{\parens}{\lparen}{\rparen}
\DeclarePairedDelimiter{\bracks}{\lbrack}{\rbrack}
\DeclarePairedDelimiter{\braces}{\{}{\}}
\DeclarePairedDelimiter{\set}{\{}{\}}
\DeclarePairedDelimiterX{\gset}[2]{\{}{\}}{\,#1:#2\,}
\newcommand*\Xn{\set{1,\ldots,n}}
\newcommand*\X[1]{\set{1,\ldots,#1}}
\newcommand*\NC{\mathit{NC}}
\newcommand*{\tr}[1]{\mathcal{T}(#1)}
\newcommand*{\psym}[1]{\mathcal{I}(#1)}
\newcommand*{\Alt}[1]{\mathcal{A}_{#1}}
\newcommand*{\Sym}[1]{\mathcal{S}_{#1}}
\newcommand*\Rees[4]{\mathcal{M}[#1; \allowbreak #2,\allowbreak #3; \allowbreak #4]}
\newcommand*{\sizeddelimiter}[2]{\bBigg@{#1}#2}
\begin{document}

\title{Commuting graphs of inverse semigroups and completely regular semigroups}

\author{Tânia Paulista}
\address[T. Paulista]{%
Center for Mathematics and Applications (NOVA Math) \& Department of Mathematics\\
NOVA School of Science and Technology\\
NOVA University of Lisbon\\
2829--516 Caparica\\
Portugal
}
\email{%
t.paulista@campus.fct.unl.pt
}
\thanks{This work is funded by national funds through the FCT -- Fundação para a Ciência e a Tecnologia, I.P., under the scope of the projects UID/297/2025 and UID/PRR/297/2025 (Center for Mathematics and Applications - NOVA Math). The author is also funded by national funds through the FCT -- Fundação para a Ciência e a Tecnologia, I.P., under the scope of the studentship 2021.07002.BD}

\thanks{The author is thankful to her supervisors António Malheiro and Alan J. Cain for all the support, encouragement and guidance}

\subjclass[2020]{Primary 05C25, 20M17, 20M18; Secondary 05C15, 05C38, 05C69}

\begin{abstract}

The general ideal of this paper is to answer the following question: given a numerical property of commuting graphs, a class of semigroups $\mathcal{C}$ and $n\in\mathbb{N}$, is it possible to find a semigroup in $\mathcal{C}$ such that the chosen property is equal to $n$? We study this question for the classes of Clifford semigroups, inverse semigroups and completely regular semigroups. Moreover, the properties of commuting graphs we consider are the girth, clique number, chromatic number and knit degree.
\end{abstract}

\maketitle

\section{Introduction}

The commuting graph of a semigroup is a simple graph whose vertices are elements of the semigroup, and that describes commutativity of elements. An important feature of these graphs is the fact that there is a relationship between their combinatorial structure and the algebraic structure of the group/semigroup used to construct them. (As an example of this relationship we have the correspondence between maximum-order cliques of the commuting graph and maximum-order commutative subgroups/subsemigroups of the group/semigroup.) Because of this relationship, several authors have used commuting graphs as a tool to prove group/semigroup theoretical problems \cite{Commuting_graph_T_X, Importance_commuting_graphs_1, Sporadic_simple_groups, Importance_commuting_graphs_2, Importance_commuting_graphs_3, Importance_commuting_graphs_5, Importance_commuting_graphs_6, Importance_commuting_graphs_4}. Moreover, the study of commuting graphs of important groups/semigroups has also been a popular line of resarch. Among the selected groups/semigroups we highlight the symmetric group \cite{Commuting_graph_I_X, Symmetric_group, Diameter_commuting_graph_symmetric_group, Commuting_graph_symmetric_alternating_groups}, the alternating group \cite{Commuting_graph_symmetric_alternating_groups, Alternating_group}, the transformation semigroup \cite{Commuting_graph_T_X, Largest_commutative_T_X_P_X}, the symmetric inverse semigroup \cite{Commuting_graph_I_X} and the partial transformation semigroup \cite{Largest_commutative_T_X_P_X, Diameter_P_X}.

In 2008, Iranmanesh and Jafarzadeh \cite{Commuting_graph_symmetric_alternating_groups} conjectured that there is an upper bound for the diameter of the commuting graph of a (finite non-commutative) group. This conjecture motivated Araújo, Kinyon and Konieczny \cite{Commuting_graph_T_X} to test its veracity for semigroups. They ended up proving that there is no upper bound for the diameter of the commuting graph of a semigroup by establishing that, for each $n\in\mathbb{N}$ such that $n\geqslant 2$, there is a (finite non-commutative) semigroup whose commuting graph has diameter $n$. A few years later, Giudici and Parker \cite{Group_whose_commuting_graph_has_diameter_greater_than_n} demonstrated that, for each $n\in\mathbb{N}$, there is a (finite non-commutative) group whose commuting graph has diameter greater than $n$. This allowed them to conclude that Iranmanesh and Jafarzadeh's conjecture is not true. Although the conjecture is not true for groups in general, Morgan and Parker \cite{Commuting_graphs_group_trivial_upper_bound_diameter_10} demonstrated that, if the commuting graph of a (finite non-commutative) group with trivial center is connected, then its diameter is at most $10$, proving that Iranmanesh, and Jafarzadeh's conjecture is true for groups with trivial center. More recently, Cutolo \cite{Group_whose_commuting_graph_has_diameter_n} improved Giudici and Parker's result: he showed that, for each $n\in\mathbb{N}\setminus\set{1}$, there is a (finite non-commutative) group such that the diameter of its commuting graph is equal to $n$.

Other contributions were made to the problem of determining if certain properties of commuting graphs of semigroups have an upper bound. In 2011, Araújo, Kinyon and Konieczny \cite{Commuting_graph_T_X} proved that, for each $n\in\mathbb{N}\setminus\set{1,3}$, there is a semigroup of idempotents of knit degree $n$ (and so there is no upper bound for the knit degree of a semigroup). The question of the existence of a semigroup of knit degree $3$ remained open until, in 2016, Bauer and Greenfeld \cite{Graphs_that_arise_as_commuting_graphs_of_semigroups} constructed a semigroup of knit degree $3$. Moreover, Bauer and Greenfeld \cite{Graphs_that_arise_as_commuting_graphs_of_semigroups} and, independently, Giudici and Kuzma \cite{Graphs_that_arise_as_commuting_graphs_of_semigroups_2}, characterized the graphs that arise as the commuting graph of a (finite non-commutative) semigroup. In both papers it was established that the graphs that have more than one vertex, and where no vertex is adjacent to all the others, are precisely the commuting graphs of semigroups. This characterization is also crucial for the problem at hand because it can be used to conclude that, for each $n\in\mathbb{N}$, there is a semigroup whose commuting graph is the graph given by the union of the complete graphs $K_n$ and $K_1$; this union has clique and chromatic numbers equal to $n$. and that, for each $n\in\mathbb{N}$ such that $n\geqslant 3$, there is a semigroup whose commuting graph is the graph given by the union of the cycle graph $C_n$ and the complete graph $K_1$; this union has girth equal to $n$. And so the clique number, chromatic number and girth of commuting graphs of semigroups have no upper bounds.

All of these results motivate a new problem for commuting graphs: 

\begin{question}\label{question smg group}
    What are the possible values for the diameter/clique number/chromatic number/girth/knit degree of the commuting graph of a group/semigroup?
\end{question}

In \cite{Completely_simple_semigroups_paper} the present author contributed to this question by showing that the knit degree of a group is always undefined (and, thus, the set of possible values is, in some sense, empty) and that $3$ is the unique possible value for the girth of the commuting graph of a group. We note that these sets are different from the sets of possible values for the commuting graph of a semigroup, which are, respectively, $\mathbb{N}$ and $\mathbb{N}\setminus\set{1,2}$. This makes us wonder how `far away' does one need to get from groups to `recover' these sets of possible values. Following this line of reasoning leads us to reformulate Question~\ref{question smg group} in the following form:

\begin{question}\label{question classes smg}
    Given a class of semigroups $\mathcal{C}$, what are the possible values for the diameter/clique number/girth/chromatic number/knit degree of the commuting graph of a semigroup in $\mathcal{C}$?
\end{question}

The papers \cite{Completely_simple_semigroups_paper} and \cite{Completely_0-simple_paper} include contributions to Question~\ref{question classes smg}: the first paper focuses on the class of completely simple semigroups, and the second one on the class of competely $0$-simple semigroups. The goal of this paper is to answer Question~\ref{question classes smg} with respect to the classes of Clifford semigroups, inverse semigroups and completely regular semigroups. The completely regular semigroups are, more or less, `close to' groups in the sense that completely regular semigroups share various properties with groups: for examples, all the elements of a completely regular semigroup lie in some subgroup of the semigroup. The inverse semigroups are also `close to' groups: every element of an inverse semigroup has a unique inverse and the product of an element with its inverse is an idempotent. If we consider the class of Clifford semigroups --- contained simultaneously in the classes of inverse and completely regular semigroups --- we get even `closer to' groups. In Clifford semigroups, besides the properties described above for inverse and completely regular semigroups (which are also satisfied by Clifford semigroups), we have that every element commutes with its inverse and all idempotents are central elements.

We recall that Cutolo \cite{Group_whose_commuting_graph_has_diameter_n} proved that $\mathbb{N}\setminus\set{1}$ is the set of possible values for the diameter of the commuting graph of a group. It follows from the fact that groups are Clifford, inverse and completely regular semigroups that $\mathbb{N}\setminus\set{1}$ is also the set of possible values for the diameters of their commuting graphs. Given that the diameter of commuting graphs of Clifford, inverse and completely regular semigroups has been dealt with, this paper only needs to focus on the girth, clique number, chromatic number and knit degree. In general, we will see how a given property behaves for commuting graphs of groups and we will see, as we relax some group properties to obtain Clifford and then inverse and completely regular semigroups, how that influences the properties of the commuting graph of the semigroups of the new classes. More specifically, we want to see if moving from the class of groups to the class of Clifford semigroups and then to the classes of inverse and completely regular semigroups changes the set of possible values for the girth, clique number, chromatic number and knit degree. Sections~\ref{sec: girth inv smg}, \ref{sec: clique number inv smg}, \ref{sec: chromatic number inv smg} and \ref{sec: knit degree inv smg}, respectively, approach each one of those properties. As a preliminary, in Section~\ref{Preliminaries} we gather basic definitions and results concerning simple graphs, commuting graphs, completely regular semigroups, inverse semigroups and Clifford semigroups.

This paper is based on Chapters 10 and 11 of the author's Ph.D. thesis \cite{My_thesis}.

\section{Preliminaries} \label{Preliminaries}

\subsection{Graphs}\label{Subsection graphs}

A \defterm{simple graph} $G=(V,E)$ consists of a non-empty set $V$ --- whose elements are called \defterm{vertices} --- and a set $E$ --- whose elements are called \defterm{edges} --- formed by $2$-subsets of $V$. Throughout this subsection we will assume that $G=(V,E)$ is a simple graph.

Let $x$ and $y$ be vertices of $G$. If $\set{x,y}\in E$, then we say that the vertices $x$ and $y$ are \defterm{adjacent}. If $\set{x,z}\notin E$ for all $z\in V$ (that is, if $x$ is not adjacent to any other vertex), then we say that $x$ is an \defterm{isolated vertex}.

A simple graph $H=\parens{V',E'}$ is a \defterm{subgraph} of $G$ if $V'\subseteq V$ and $E'\subseteq E$. Note that, since $H$ is a simple graph, the elements of $E'$ are $2$-subsets of $V'$.

Given $V'\subseteq V$, the \defterm{subgraph induced by $V'$} is the subgraph of $G$ whose set of vertices is $V'$ and where two vertices are adjacent if and only if they are adjacent in $G$ (that is, the set of edges of the induced subgraph is $\braces{\braces{x,y}\in E: x,y\in V'}$).

A \defterm{complete graph} is a simple graph where all distinct vertices are adjacent to each other.

A \defterm{null graph} is a simple graph with no edges and where all vertices are isolated vertices.

A \defterm{path} in $G$ from a vertex $x$ to a vertex $y$ is a sequence of pairwise distinct vertices (except, possibly, $x$ and $y$) $x=x_1,x_2,\ldots,x_n=y$ such that $\braces{x_1,x_2}, \braces{x_2,x_3},\ldots, \braces{x_{n-1},x_n}$ are pairwise distinct edges of $G$. The \defterm{length} of the path is the number of edges of the path; thus, the length of our example path is $n-1$. If $n=1$, then we call the path --- which has only one vertex and whose length is $0$ --- a \defterm{trivial path}. If $x=y$ then we call the path a \defterm{cycle}. Whenever we want to mention a path, we will write that $x=x_1-x_2-\cdots-x_n=y$ is a path (instead of writing that $x=x_1,x_2,\ldots,x_n=y$ is a path).

If the graph $G$ contains cycles, then the \defterm{girth} of $G$, denoted $\girth{G}$, is the length of a shortest cycle in $G$. If $G$ contains no cycles, then $\girth{G}=\infty$.

Let $K\subseteq V$. We say that $K$ is a \defterm{clique} in $G$ if $\braces{x,y}\in E$ for all $x,y\in K$, that is, if the subgraph of $G$ induced by $K$ is complete. The \defterm{clique number} of $G$, denoted $\cliquenumber{G}$, is the size of a largest clique in $G$, that is, $\cliquenumber{G}=\max\left\{|K|: K \text{ is a clique in } G\right\}$.

The \defterm{chromatic number} of $G$, denoted $\chromaticnumber{G}$, is the minimum number of colours required to colour the vertices of $G$ in a way such that adjacent vertices have distinct colours.

\subsection{Commuting graphs}\label{Subsection commuting graphs}

In this subsection we present the definition of commuting graph of a semigroup. We also define left path and knit degree, two concepts associated with commuting graphs.

First, we recall that the \defterm{center} of a semigroup $S$ is the set
\begin{displaymath}
	\centre{S}= \gset{x\in S}{xy=yx \text{ for all } y\in S},
\end{displaymath}
of \defterm{central} elements of $S$.

The \defterm{commuting graph} of a finite non-commutative semigroup $S$, denoted $\commgraph{S}$, is the simple graph whose set of vertices is $S\setminus Z(S)$ and where two distinct vertices $x,y\in S\setminus Z(S)$ are adjacent if and only if $xy=yx$. (Note that the semigroup must be non-commutative to guarantee that the vertex set of the commuting graph is a non-empty set.)

A \defterm{left path} in $\commgraph{S}$ is a path $x_1-x_2-\cdots-x_n$ in $\commgraph{S}$ such that $x_1\neq x_n$ and $x_1x_i=x_nx_i$ for all $i\in\braces{1,\ldots,n}$. If $\commgraph{S}$ contains left paths, then the \defterm{knit degree} of $S$, denoted $\knitdegree{S}$, is the length of a shortest left path in $\commgraph{S}$.

\subsection{Completely regular semigroups}\label{Subsection comp regular}

A semigroup $S$ is a \defterm{completely regular semigroup} if it is equipped with a unary operation $^{-1}$ that satisfies, for all $x\in S$, the following conditions:
\begin{align}
	\parens{x^{-1}}^{-1}=x
	\label{preli: comp reg smg, inverse of an inverse},\\
	xx^{-1}x=x,
	\label{preli: comp reg smg, regular}\\
	x^{-1}x=xx^{-1}.
	\label{preli: comp reg smg, x commutes with inverse}
\end{align}

The next theorem provides an alternative way of defining completely regular semigroups.

\begin{theorem}\label{preli: comp reg, every element in subgroup}
	A semigroup $S$ is completely regular if and only if every element of $S$ lies in a subgroup of $S$.
\end{theorem}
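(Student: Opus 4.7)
The plan is to establish both implications separately, essentially by identifying the unary operation of a completely regular semigroup with group inversion inside a suitable subgroup.

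For the forward direction, I would fix $x \in S$ and first verify, using \eqref{preli: comp reg smg, regular} and \eqref{preli: comp reg smg, x commutes with inverse}, that $e := xx^{-1} = x^{-1}x$ is idempotent: $e^2 = xx^{-1}xx^{-1} = (xx^{-1}x)x^{-1} = xx^{-1} = e$. A direct computation then shows that $e$ is a two-sided identity on both $x$ and $x^{-1}$. Setting
\[
G_e = \{ y \in S : ey = ye = y \text{ and there exists } z \in S \text{ with } yz = zy = e \},
\]
I would check that $G_e$ is a subgroup of $S$: closure holds because if $y_1, y_2 \in G_e$ have inverses $z_1, z_2$ respectively, then $y_1 y_2$ has identity $e$ and inverse $z_2 z_1$; associativity is inherited from $S$; the identity is $e$; and inverses exist by construction. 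Since $x \in G_e$ (with inverse $x^{-1}$), this direction is complete.

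For the reverse direction, assume every element of $S$ lies in some subgroup. The subtle step is to define the unary operation $^{-1}$ unambiguously. I would show that if $G_1$ and $G_2$ are subgroups of $S$ both containing a given $x$, with identities $e_1, e_2$ and inverses $y_1, y_2$ of $x$, then $e_1 = e_2$ and $y_1 = y_2$. This follows from a short calculation: $e_1 = xy_1 = (e_2 x)y_1 = e_2 e_1$ and $e_1 = y_1 x = y_1(x e_2) = e_1 e_2$; swapping the roles of $1$ and $2$ gives $e_2 = e_1 e_2$, so $e_1 = e_1 e_2 = e_2$, after which $y_1 = y_1 e_1 = y_1(xy_2) = (y_1 x) y_2 = e_1 y_2 = y_2$. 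I can therefore define $x^{-1}$ as the group-theoretic inverse of $x$ in any subgroup containing it, and axioms \eqref{preli: comp reg smg, inverse of an inverse}, \eqref{preli: comp reg smg, regular} and \eqref{preli: comp reg smg, x commutes with inverse} then follow immediately from the group axioms applied inside that subgroup.

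The main obstacle is precisely this well-definedness argument in the reverse direction: without it, the unary operation $^{-1}$ does not make sense on $S$ at all, and the three defining axioms of completely regular semigroups cannot be coherently formulated. Once this consistency is secured, everything else reduces to routine verification.
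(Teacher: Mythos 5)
The paper states this theorem without proof, as a standard preliminary result, so there is no in-paper argument to compare against; your proof is the classical one and is essentially correct. Both directions work: in the forward direction you correctly extract the idempotent $e=xx^{-1}=x^{-1}x$ and place $x$ in the group of units of the local monoid $eSe$, and in the reverse direction your well-definedness computation (first $e_1=e_2$ via $e_1=e_1e_2=e_2$, then $y_1=y_2$) is exactly what is needed to make the unary operation $^{-1}$ coherent, after which the three axioms are immediate.

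One small point deserves a word in the forward direction. As you define
\[
G_e = \{ y \in S : ey = ye = y \text{ and there exists } z \in S \text{ with } yz = zy = e \},
\]
the witness $z$ is not required to satisfy $ez=ze=z$, so it is not automatic that the inverse of $y$ lies \emph{in} $G_e$, which is what a group requires. The fix is one line: replace $z$ by $z'=eze$; then $yz'=(ye)(ze)=(yz)e=e$ and $z'y=e(z(ey))=e(zy)=e$, while $ez'=z'e=z'$, so $z'\in G_e$ is a two-sided inverse of $y$ relative to $e$. With that adjustment $G_e$ is genuinely the group of units of $eSe$ and your argument goes through. You should also note explicitly (as you implicitly use) that $e$ is a two-sided identity for $x^{-1}$, which follows from $x^{-1}xx^{-1}=x^{-1}(x^{-1})^{-1}x^{-1}=x^{-1}$ using \eqref{preli: comp reg smg, inverse of an inverse} and \eqref{preli: comp reg smg, regular} applied to $x^{-1}$, together with \eqref{preli: comp reg smg, x commutes with inverse}.
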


In Theorem~\ref{preli: comp reg, semillatice comp simple} we have a structure theorem for completely regular semigroups, which relies on completely simple semigroups. A \defterm{completely simple semigroup} is a completely regular semigroup that is also simple (that is, whose unique ideal is the semigroup itself). Completely simple semigroups can also be described through a semigroup construction named the Rees matrix semigroup over a group (Theorem~\ref{preli: completely simple semigroup <=> Rees matrix construction}), which we define below.

Let $G$ be a group, let $I$ and $\Lambda$ be index sets, and let $P$ be a $\Lambda \times I$ matrix whose entries are elements of $G$. For each $i\in I$ and $\lambda \in \Lambda$, we denote by $p_{\lambda i}$ the $\parens{\lambda, i}$-th entry of the matrix $P$. A \defterm{Rees matrix semigroup over a group}, denoted $\Rees{G}{I}{\Lambda}{P}$, is the set $I\times G \times \Lambda$ with multiplication defined as follows
\begin{displaymath} \parens{i,x,\lambda}\parens{j,y,\mu}=\parens{i,xp_{\lambda j}y,\mu}.
\end{displaymath}

\begin{theorem}\label{preli: completely simple semigroup <=> Rees matrix construction}
	A semigroup $S$ is completely simple if and only if there exist a group $G$, index sets $I$ and $\Lambda$, and a $\Lambda \times I$ matrix $P$ whose entries are elements of $G$ such that $S\simeq \Rees{G}{I}{\Lambda}{P}$.
\end{theorem}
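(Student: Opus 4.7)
The plan is to prove the two directions separately. This is a classical structure theorem (essentially the Rees–Suschkewitsch theorem), and while the forward direction reduces to a direct verification, the backward direction requires extracting the ingredients $G$, $I$, $\Lambda$ and $P$ from the internal structure of $S$ using Green's relations.

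For the forward direction, I would start from a Rees matrix semigroup $T = \Rees{G}{I}{\Lambda}{P}$ and check that it is completely simple. Associativity of the multiplication $(i,x,\lambda)(j,y,\mu) = (i, xp_{\lambda j} y, \mu)$ is a routine computation. To show simplicity, given any $(i,x,\lambda), (j,y,\mu) \in T$, one exhibits elements that multiply $(j,y,\mu)$ on the left and right to yield $(i,x,\lambda)$, using that $P$ has entries in the group $G$; this shows that the two-sided ideal generated by any element is all of $T$. For complete regularity, I would observe that for each fixed $i \in I$ and $\lambda \in \Lambda$, the subset $\gset{(i,x,\lambda)}{x \in G}$ is a subgroup of $T$ with identity $(i, p_{\lambda i}^{-1}, \lambda)$ and inverse $(i,x,\lambda)^{-1} = (i, p_{\lambda i}^{-1} x^{-1} p_{\lambda i}^{-1}, \lambda)$; hence every element lies in a subgroup, and Theorem~\ref{preli: comp reg, every element in subgroup} yields complete regularity.

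For the backward direction, suppose $S$ is completely simple. Since $S$ is simple and every element lies in a subgroup, standard Green's relations theory gives that $S$ consists of a single $\mathcal{D}$-class, every $\mathcal{H}$-class is a group, and all these $\mathcal{H}$-classes are isomorphic. Let $I$ index the $\mathcal{R}$-classes and $\Lambda$ the $\mathcal{L}$-classes, so each element lies in a unique $\mathcal{H}$-class $H_{i\lambda}$. Fix $1 \in I$ and $1 \in \Lambda$, set $G = H_{11}$ with identity $e$, and for each $i \in I$, $\lambda \in \Lambda$ choose representatives $r_i \in H_{i1}$ and $q_\lambda \in H_{1\lambda}$ with $r_1 = q_1 = e$, chosen so that $r_i e = r_i$ and $e q_\lambda = q_\lambda$. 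Define the sandwich matrix entries by $p_{\lambda i} = q_\lambda r_i \in H_{11} = G$. The isomorphism $\varphi : \Rees{G}{I}{\Lambda}{P} \to S$ is then given by $\varphi(i, x, \lambda) = r_i x q_\lambda$, with inverse obtained by decomposing each $s \in S$ according to its $\mathcal{H}$-class. One verifies that $\varphi$ is a bijection (using that each $\mathcal{H}$-class is a coset of $G$ in the appropriate sense) and that it respects the multiplication: indeed, $\varphi(i,x,\lambda)\varphi(j,y,\mu) = r_i x q_\lambda r_j y q_\mu = r_i x p_{\lambda j} y q_\mu = \varphi(i, x p_{\lambda j} y, \mu)$.

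The hard part is the backward direction, and specifically the verification that the choices of representatives $r_i$, $q_\lambda$ can indeed be made so that multiplication by them behaves as claimed, and that every element of $S$ admits a unique expression $r_i x q_\lambda$. This rests on the technical machinery of Green's relations in completely regular semigroups — in particular on Green's Lemma, which ensures that right multiplication by $q_\lambda$ gives a bijection between $\mathcal{H}$-classes within a common $\mathcal{R}$-class, and similarly for $r_i$. Once these tools are in hand, the calculation above is mechanical; without them, constructing $\varphi$ from scratch would be the principal obstacle.
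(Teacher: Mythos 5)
The paper states this result purely as background in Section~\ref{Preliminaries} and gives no proof of it: it is the classical Rees--Suschkewitsch structure theorem, quoted as a known fact, so there is no argument of the author's to compare yours against. Judged on its own terms, your outline is the standard textbook proof and is correct. The forward direction is complete as written: the verification that $\gset{(i,x,\lambda)}{x\in G}$ is a subgroup with identity $(i,p_{\lambda i}^{-1},\lambda)$ checks out, and together with the ideal computation this gives complete regularity (via Theorem~\ref{preli: comp reg, every element in subgroup}) and simplicity, which is exactly the paper's definition of completely simple. For the converse, your construction of $G=H_{11}$, the representatives $r_i$, $q_\lambda$, the sandwich matrix $p_{\lambda i}=q_\lambda r_i$ and the map $\varphi(i,x,\lambda)=r_i x q_\lambda$ is the right one; two small remarks. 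First, the conditions $r_i e=r_i$ and $eq_\lambda=q_\lambda$ are not really choices to be arranged: they hold automatically for any $r_i\in H_{i1}$ and $q_\lambda\in H_{1\lambda}$, since an element $\mathcal{L}$-related (resp.\ $\mathcal{R}$-related) to an idempotent has that idempotent as a right (resp.\ left) identity. Second, the genuinely load-bearing facts you defer --- that $q_\lambda r_i$ lands in $H_{11}$, and that $x\mapsto r_i x q_\lambda$ is a bijection from $G$ onto $H_{i\lambda}$ --- both follow from the location-of-products theorem and Green's Lemma once one knows every $\mathcal{H}$-class of $S$ contains an idempotent; you name the right tools, so this is an honest appeal to standard machinery rather than a gap, but a self-contained proof would need those lemmas spelled out.
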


A \defterm{semilattice} $Y$ is a non-empty set equipped with a partial order such that, for each $\alpha,\beta\in Y$ there exists a greatest lower bound, which we denote by $\alpha\sqcap\beta$. A semigroup $S$ is a \defterm{semilattice of completely simple semigroups} if there exists a semilattice $Y$ and completely simple subsemigroups $S_{\alpha}$ of $S$, where $\alpha\in Y$, such that $\set{S_{\alpha}}_{\alpha\in Y}$ is a partition of $S$ and $S_{\alpha}S_{\beta}\subseteq S_{\alpha\sqcap\beta}$ for all $\alpha,\beta\in S$.

\begin{theorem}\label{preli: comp reg, semillatice comp simple}
	Every completely regular semigroup is a semilattice of completely simple semigroups.
\end{theorem}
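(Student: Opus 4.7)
The plan is to realise the required decomposition via Green's relation $\mathcal{J}$, where $x \mathrel{\mathcal{J}} y$ means $S^1 x S^1 = S^1 y S^1$. Setting $Y = S/\mathcal{J}$ and letting $S_\alpha$ denote the $\mathcal{J}$-class corresponding to $\alpha \in Y$, the goal is to show that $\mathcal{J}$ is a congruence, that $Y$ is a semilattice satisfying $S_\alpha S_\beta \subseteq S_{\alpha \sqcap \beta}$, and that each $S_\alpha$ is a completely simple subsemigroup of $S$.

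First I would record the elementary consequences of the axioms (2.1)--(2.3): for each $x \in S$, the element $e_x := xx^{-1} = x^{-1}x$ is an idempotent with $e_x x = x e_x = x$, and by Theorem~\ref{preli: comp reg, every element in subgroup} the element $x$ belongs to the maximal subgroup $H_{e_x}$ with identity $e_x$ and group inverse $x^{-1}$. The central step is then to prove two $\mathcal{J}$-identities: $x \mathrel{\mathcal{J}} x^2$ and $xy \mathrel{\mathcal{J}} yx$ for all $x, y \in S$. The first is the short computation $x = x(x^{-1}x) = x(xx^{-1}) = x^2 x^{-1}$, which gives $x \in S^1 x^2$. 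The second follows from the first together with the observation $(yx)^2 = y(xy)x \in S^1 (xy) S^1$, which yields $yx \mathrel{\mathcal{J}} (yx)^2 \leq_{\mathcal{J}} xy$, and the reverse inequality by symmetry. Using these two identities, $\mathcal{J}$ is shown to be a congruence: if $x \mathrel{\mathcal{J}} y$ and $y = axb$ for $a,b \in S^1$, then for any $z \in S$ we have $zy \mathrel{\mathcal{J}} yz = (ax)(bz) \mathrel{\mathcal{J}} (bz)(ax) \in S^1 zx S^1$, so $zy \leq_{\mathcal{J}} zx$, and symmetrically. Consequently $S/\mathcal{J}$ is a commutative band, that is, a semilattice, under the induced multiplication $[x] \cdot [y] = [xy]$.

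Next I would verify that each $\mathcal{J}$-class $S_\alpha$ is a completely simple subsemigroup. Closure under multiplication follows from the congruence property together with $x^2 \mathrel{\mathcal{J}} x$: for $x, y \in S_\alpha$ one has $xy \mathrel{\mathcal{J}} x^2 \mathrel{\mathcal{J}} x$. Closure under $^{-1}$ follows because $x^{-1} = e_x x^{-1} = x^{-1} x x^{-1} \in S^1 x S^1$, so $x^{-1} \leq_{\mathcal{J}} x$, and symmetrically; hence $S_\alpha$ is itself completely regular. Simplicity of $S_\alpha$ is obtained by trimming the witnessing factors into $S_\alpha$: for $x, y \in S_\alpha$ with $y = axb$, multiplying by $e_y$ on both sides yields $y = e_y y e_y = (e_y a) x (b e_y)$, and a short argument using the congruence property shows $e_y a, b e_y \in S_\alpha$, so no proper ideal of $S_\alpha$ can contain $x$.

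The main obstacle is the $\mathcal{J}$-commutativity identity $xy \mathrel{\mathcal{J}} yx$: once this is in place, the remaining steps are short and essentially formal manipulations of the definition of $\mathcal{J}$. It is precisely here that all three axioms (2.1)--(2.3) are used essentially, via the identity $x = x^2 x^{-1}$ that gives $x \mathrel{\mathcal{J}} x^2$, which in turn powers the $(yx)^2 = y(xy)x$ trick and distinguishes the completely regular case from the merely regular one.
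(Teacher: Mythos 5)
The paper itself gives no proof of this statement: it is quoted in the preliminaries as a known structure theorem (Clifford's decomposition), so there is nothing internal to compare against. Your proposal is essentially the classical textbook proof via Green's relation $\mathcal{J}$, and its architecture is sound: the identities $x\mathrel{\mathcal{J}}x^{2}$ (from $x=x(x^{-1}x)=x(xx^{-1})=x^{2}x^{-1}$) and $xy\mathrel{\mathcal{J}}yx$ (from $(yx)^{2}=y(xy)x$), the resulting semilattice congruence, closure of each class under multiplication and under $^{-1}$, and the trimming argument $y=(e_{y}a)x(be_{y})$ for simplicity of each class are all correct and complete the proof.

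There is, however, one link in your congruence verification that fails as written. From $y=axb$ you pass to $zy\mathrel{\mathcal{J}}yz=(ax)(bz)\mathrel{\mathcal{J}}(bz)(ax)$ and then assert $(bz)(ax)\in S^{1}(zx)S^{1}$. But $(bz)(ax)=bzax$ does not contain $zx$ as a contiguous factor (the element $a$ sits between $z$ and $x$), so this membership is false in general. The repair is immediate and uses only the tools you have already set up: from $yz=a\bigl(x(bz)\bigr)$ you get $yz\leqslant_{\mathcal{J}}x(bz)$, then $x(bz)\mathrel{\mathcal{J}}(bz)x=b(zx)\leqslant_{\mathcal{J}}zx$, whence $zy\mathrel{\mathcal{J}}yz\leqslant_{\mathcal{J}}zx$; the reverse inequality follows by symmetry from $x\mathrel{\mathcal{J}}y$, and the right-handed case is analogous. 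With that one chain corrected, the argument goes through. Two cosmetic points: in the simplicity step you should spell out why $e_{y}a\in S_{\alpha}$, namely that $[y]=[a]\sqcap[x]\sqcap[b]=\alpha$ forces $[a]\sqcap\alpha=\alpha$ (and handle $a=1$ separately, where $e_{y}a=e_{y}\in S_{\alpha}$); and the conclusion should be phrased as ``the only ideal of $S_{\alpha}$ is $S_{\alpha}$ itself,'' which together with complete regularity of $S_{\alpha}$ matches the paper's definition of completely simple.
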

 
\subsection{Inverse semigroups}\label{Subsection inverse}

A semigroup $S$ is an \defterm{inverse semigroup} if it is equipped with a unary operation $^{-1}$ that satisfies, for all $x,y\in S$, the following conditions:
\begin{align}
	\parens{x^{-1}}^{-1}=x,
	\label{preli: inv smg, inverse of an inverse}\\
	xx^{-1}x=x,
	\label{preli: inv smg, regular}\\
	\parens{xy}^{-1}=y^{-1}x^{-1},
	\label{preli: inv smg, inverse product}\\
	\parens{xx^{-1}}\parens{yy^{-1}}=\parens{yy^{-1}}\parens{xx^{-1}}.
	\label{preli: inv smg, idemp commute}
\end{align}

Theorem~\ref{preli: characterization inv smg} provides alternative ways of characterizing inverse semigroups. Before we present that theorem, we recall the notions of regular element and inverse element.

Let $S$ be a semigroup. An element $x\in S$ is \defterm{regular} if there exists $y\in S$ such that $xyx=x$. An \defterm{inverse} of an element $x\in S$ is an element $y\in S$ such that $xyx=x$ and $yxy=y$.

\begin{theorem}\label{preli: characterization inv smg}
	Let $S$ be a semigroup. The following statements are equivalent:
	\begin{enumerate}
		\item $S$ is an inverse semigroup.
		
		\item Every element of $S$ has a unique inverse.
		
		\item $S$ is regular and all its idempotents commute.
		
	\end{enumerate}
\end{theorem}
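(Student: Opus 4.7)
The plan is to prove the cyclic chain of implications $(1) \Rightarrow (3) \Rightarrow (2) \Rightarrow (1)$.

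For $(1) \Rightarrow (3)$, regularity is immediate from axiom~\eqref{preli: inv smg, regular}. To show idempotents commute, I would first observe that for any idempotent $e$, axiom~\eqref{preli: inv smg, inverse product} applied to $e = e \cdot e$ forces $e^{-1}$ itself to be idempotent. Both $ee^{-1}$ and $e^{-1}e = e^{-1}(e^{-1})^{-1}$ then have the form $xx^{-1}$, so axiom~\eqref{preli: inv smg, idemp commute} makes them commute; simplifying the two sides of that commutation (using $e^{-1}e^{-1} = e^{-1}$ and $ee = e$) yields $e = e^{-1}$. Since every idempotent $e$ then equals $ee^{-1}$, axiom~\eqref{preli: inv smg, idemp commute} forces commutativity of every pair of idempotents.

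For $(3) \Rightarrow (2)$, existence of an inverse is exactly regularity. For uniqueness, I would suppose $y, z$ are both inverses of $x$; the four elements $xy, yx, xz, zx$ are idempotents, hence mutually commute by hypothesis. Computing $(xy)(xz)$ two ways --- once as $xyxz = xz$ via $xyx = x$, and once as $(xz)(xy) = xy$ via commutativity and $xzx = x$ --- gives $xy = xz$, and symmetrically $yx = zx$. Then $y = yxy = (zx)y = z(xy) = z(xz) = zxz = z$.

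For $(2) \Rightarrow (1)$, I define $x^{-1}$ to be the unique inverse of $x$. Axioms~\eqref{preli: inv smg, inverse of an inverse} and~\eqref{preli: inv smg, regular} are immediate (the first by symmetry of the inverse relation combined with uniqueness). The crucial intermediate step is to show that uniqueness of inverses forces the set of idempotents to be a commutative subsemigroup. Given idempotents $e, f$, let $w = (ef)^{-1}$. A short direct check shows $fwe$ is another inverse of $ef$, so $w = fwe$ by uniqueness. Then $w^2 = (fwe)(fwe) = f(w \cdot ef \cdot w)e = fwe = w$, so $w$ is idempotent; thus $w^{-1} = w$, but the symmetry of the inverse relation also gives $w^{-1} = ef$, whence $ef = w$ is idempotent. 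Once products of idempotents are idempotent, a standard band calculation shows that $fe$ is also an inverse of $ef$, and uniqueness yields $ef = fe$. With commuting idempotents in hand, axiom~\eqref{preli: inv smg, idemp commute} is immediate, and axiom~\eqref{preli: inv smg, inverse product} follows from checking that $y^{-1}x^{-1}$ satisfies the two inverse equations for $xy$, a check that uses commutativity of the idempotents $yy^{-1}$ and $x^{-1}x$.

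The main obstacle is the step inside $(2) \Rightarrow (1)$ that uniqueness of inverses forces the product of two idempotents to be itself idempotent: it is not immediately obvious how to use uniqueness to gain traction on elements not already known to be idempotent, but the trick is to pass to the inverse $w$ of $ef$, exploit the forced identity $w = fwe$, and verify directly that $w^2 = w$ via the inverse identity $w(ef)w = w$.
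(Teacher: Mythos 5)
This theorem is stated in the paper's preliminaries as standard background and is given no proof there, so there is nothing to compare against; your argument is essentially the classical textbook proof (the one found in Howie's \emph{Fundamentals of Semigroup Theory}) and, as far as I can check, every step is correct. In particular, the key computations all go through: in $(1)\Rightarrow(3)$, the deduction $e^{-1}=(ee)^{-1}=e^{-1}e^{-1}$ followed by expanding both sides of $\parens{ee^{-1}}\parens{e^{-1}e}=\parens{e^{-1}e}\parens{ee^{-1}}$ does yield $e=e^{-1}$, hence $e=ee^{-1}$ and commutativity of idempotents; in $(2)\Rightarrow(1)$, the verification that $fwe$ is an inverse of $ef$, the resulting identities $w=fwe$ and $w^2=w$, and the conclusion $ef=w^{-1}=w$ are all sound, as is the final check that $y^{-1}x^{-1}$ inverts $xy$.

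One small gloss worth tightening: in $(3)\Rightarrow(2)$ you say existence of an inverse ``is exactly regularity,'' but regularity only provides $y$ with $xyx=x$, not $yxy=y$; you need the standard one-line repair of replacing $y$ by $yxy$, which is then a genuine inverse of $x$. This does not affect the correctness of the overall argument, but as written that sentence conflates ``regular element'' with ``element possessing an inverse.''
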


The last result of this subsection is the Vagner--Preston Theorem, which is the analogue for inverse semigroups of Cayley's Theorem for groups. 

\begin{theorem}[Vagner--Preston Theorem]\label{preli: vagner--preston theorem inv}
	Every inverse semigroup is isomorphic to an inverse subsemigroup of $\psym{X}$, for some set $X$.
\end{theorem}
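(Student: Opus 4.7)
\bigskip

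\noindent\textbf{Proof plan.} The plan is to adapt the proof of Cayley's Theorem to inverse semigroups by taking $X = S$ itself and producing, for each element $a \in S$, a partial bijection of $S$ given by right multiplication by $a$ on a suitable subset. Concretely, for each $a \in S$ I set $D_a = Saa^{-1}$ (equivalently, the set of $x \in S$ with $x \cdot aa^{-1} = x$) and define $\rho_a \colon D_a \to S$ by $\rho_a(x) = xa$. I then define $\phi \colon S \to \psym{S}$ by $\phi(a) = \rho_a$ and aim to show that $\phi$ is an injective semigroup homomorphism, so that $\phi(S)$ is the required inverse subsemigroup of $\psym{S}$ isomorphic to $S$.

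The first step is to verify that each $\rho_a$ is genuinely a partial bijection. Injectivity follows from the observation that if $x,y \in Saa^{-1}$ and $xa = ya$, then multiplying on the right by $a^{-1}$ gives $x aa^{-1} = yaa^{-1}$, which reduces to $x = y$ by the defining property of $D_a$. A short computation using \eqref{preli: inv smg, regular} shows that $\im \rho_a = Sa = Sa^{-1}a$, and one can check that $\rho_{a^{-1}}$ is its set-theoretic inverse, so $\rho_a \in \psym{S}$.

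The second step is to prove that $\phi$ is a homomorphism, i.e.\ $\rho_a \rho_b = \rho_{ab}$. At the level of values this is immediate from associativity, since $\rho_b(\rho_a(x)) = (xa)b = x(ab) = \rho_{ab}(x)$. The subtle point is matching the domains: the domain of the composite $\rho_a \rho_b$ is the set of $x \in Saa^{-1}$ with $xa \in Sbb^{-1}$, while the domain of $\rho_{ab}$ is $S(ab)(ab)^{-1} = S\,abb^{-1}a^{-1}$ by \eqref{preli: inv smg, inverse product}. I expect this to be the main obstacle, and it is exactly where the hypothesis that $S$ is inverse is used: by \eqref{preli: inv smg, idemp commute} the idempotents $aa^{-1}$ and $bb^{-1}$ commute, so $abb^{-1}a^{-1}$ is an idempotent and $abb^{-1}a^{-1} = aa^{-1}\,abb^{-1}a^{-1}$. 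Exploiting this commutation of idempotents, I plan to show both inclusions between the two domains by direct manipulation, using only \eqref{preli: inv smg, inverse of an inverse}--\eqref{preli: inv smg, idemp commute}.

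Finally, for injectivity, suppose $\rho_a = \rho_b$. Then $aa^{-1}$, which lies in $D_a$ because it is idempotent, must also lie in $D_b$, giving $aa^{-1} = aa^{-1}\cdot bb^{-1}$; and applying the two functions at $aa^{-1}$ yields $a = aa^{-1}b$. A symmetric argument gives $b = bb^{-1}a$. Substituting the first identity into the second and using that idempotents commute then collapses $b$ to $a$, so $\phi$ is injective. Assembling these steps, $\phi$ embeds $S$ into $\psym{S}$, and since inverse subsemigroups of $\psym{X}$ are automatically inverse, $\phi(S)$ is the sought inverse subsemigroup.
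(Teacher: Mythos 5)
Your proposal is correct: it is the standard Wagner--Preston argument (right translations $\rho_a\colon Saa^{-1}\to Sa^{-1}a$, with the commutation of the idempotents $aa^{-1}$ and $bb^{-1}$ doing the work both in matching $\dom(\rho_a\rho_b)$ with $\dom\rho_{ab}$ and in the final injectivity step $a=aa^{-1}b=aa^{-1}bb^{-1}a=bb^{-1}a=b$). The paper states this theorem as a classical preliminary and gives no proof of its own, so there is nothing to compare against; the sketched computations all go through as you describe.
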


\subsection{Clifford semigroups}\label{Subsection Clifford}

A semigroup $S$ is a \defterm{Clifford semigroup} if it is equipped with a unary operation $^{-1}$ that, for all $x,y\in S$, satisfies:
\begin{align}
	\parens{x^{-1}}^{-1}=x
	\label{preli: Clifford smg, inverse of an inverse},\\
	xx^{-1}x=x
	\label{preli: Clifford smg, regular},\\
	x^{-1}x=xx^{-1}
	\label{preli: Clifford smg, x commutes with inverse},\\
	\parens{xx^{-1}}\parens{yy^{-1}}=\parens{yy^{-1}}\parens{xx^{-1}}.
	\label{preli: Clifford smg, idemp commute}
\end{align}

The theorem below gives us an alternative characterization of Clifford semigroups.

\begin{theorem}\label{preli: characterization Clifford smg}
	A semigroup $S$ is a Clifford semigroup if and only if $S$ is regular and all its idempotents are central.
\end{theorem}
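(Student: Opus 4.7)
The plan is to prove each direction of the biconditional separately; the forward implication is where the real work lies.

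For the direction ``Clifford semigroup implies regular with central idempotents'': regularity is immediate from~\eqref{preli: Clifford smg, regular}. I would first show that every idempotent $e$ can be written as $ee^{-1}$, via the chain
\[
e \;=\; ee^{-1}e \;=\; (e^{-1}e)e \;=\; e^{-1}e^2 \;=\; e^{-1}e \;=\; ee^{-1},
\]
using~\eqref{preli: Clifford smg, regular} and~\eqref{preli: Clifford smg, x commutes with inverse}. Consequently, axiom~\eqref{preli: Clifford smg, idemp commute} yields that any two idempotents commute, so by Theorem~\ref{preli: characterization inv smg} the semigroup $S$ is inverse and the Clifford inverse coincides with the (unique) inverse-semigroup inverse; in particular $(xy)^{-1} = y^{-1}x^{-1}$ and $e^{-1} = e$ for every idempotent $e$.

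The main obstacle is strengthening ``idempotents commute pairwise'' into ``idempotents are central''. For an idempotent $e$ and an arbitrary $x \in S$, I would apply~\eqref{preli: Clifford smg, x commutes with inverse} to the element $xe$ to get $(xe)^{-1}(xe) = (xe)(xe)^{-1}$. Expanding the left-hand side as $ex^{-1}xe = (x^{-1}x)e$ (using that the idempotents $e$ and $x^{-1}x$ commute and $e^2 = e$) and the right-hand side as $xex^{-1}$ produces the identity $(x^{-1}x)e = xex^{-1}$. Right-multiplying by $x$ and simplifying by means of $xx^{-1}x = x$ and the commutativity of $e$ with $x^{-1}x$ yields $(x^{-1}x)(ex) = xe$. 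Separately, writing $ex = e(xx^{-1}x) = (exx^{-1})x$ and swapping the commuting idempotents $e$ and $xx^{-1}$ gives $ex = (x^{-1}x)(ex)$. Combining the two identities yields $ex = xe$, as required.

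For the converse, suppose $S$ is regular with every idempotent central. Since central idempotents pairwise commute, $S$ is inverse by Theorem~\ref{preli: characterization inv smg}; I take $^{-1}$ to be the unique inverse-semigroup inverse, so that~\eqref{preli: Clifford smg, inverse of an inverse} and~\eqref{preli: Clifford smg, regular} are automatic, and~\eqref{preli: Clifford smg, idemp commute} is immediate since $xx^{-1}$ and $yy^{-1}$ are idempotents and hence central. For~\eqref{preli: Clifford smg, x commutes with inverse}, set $e = xx^{-1}$ and $f = x^{-1}x$; both are central idempotents. Using centrality to move $xx^{-1}$ past $x^{-1}$ gives
\[
ef = (xx^{-1})(x^{-1}x) = x^{-1}(xx^{-1}x) = x^{-1}x = f,
\]
and analogously $fe = (x^{-1}x)(xx^{-1}) = (xx^{-1}x)x^{-1} = xx^{-1} = e$; since $ef = fe$ (again by centrality), this forces $e = f$, which is~\eqref{preli: Clifford smg, x commutes with inverse}.
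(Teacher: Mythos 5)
Your proof is correct: the forward direction's reduction of ``idempotents commute'' to axiom~\eqref{preli: Clifford smg, idemp commute} via $e=ee^{-1}$, and the subsequent derivation of centrality from $(xe)^{-1}(xe)=(xe)(xe)^{-1}$, both check out, as does the converse. Note, however, that the paper states Theorem~\ref{preli: characterization Clifford smg} as a standard background fact in the preliminaries and gives no proof of its own, so there is no in-paper argument to compare yours against; your write-up stands as a valid self-contained proof of the cited result.
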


It is clear that any Clifford semigroup satisfies conditions \eqref{preli: comp reg smg, inverse of an inverse}, \eqref{preli: comp reg smg, regular} and \eqref{preli: comp reg smg, x commutes with inverse}, which implies that Clifford semigroups are completely regular semigroups.  Moreover, it follows from Theorems~\ref{preli: characterization inv smg} and \ref{preli: characterization Clifford smg} that Clifford semigroups are also inverse semigroups.

\section{Possible values for the girth of commuting graphs}\label{sec: girth inv smg}

In \cite[Proposition 4.2]{Completely_simple_semigroups_paper} the present author proved that the unique possible value for the girth of the commuting graph of a group is $3$. The aim of this section is to prove that this result extends to commuting graphs of Clifford semigroups, and then to commuting graphs of inverse semigroups. Since Clifford semigroups are completely regular semigroups, then $3$ is also a possible value for the girth of the commuting graph of a completely regular semigroup. However, $3$ is not the unique possible value when we consider completely regular semigroups: we will see that for all even $n\in\mathbb{N}\setminus\set{1,2,3}$ there is a completely regular semigroup whose commuting graph has girth equal to $n$.

We begin by introducing the following lemma, which is important to establish that $3$ is the unique possible value for the girth of the commuting graph of a Clifford semigroup.

We recall that the set of idempotents of a semigroup $S$ is denoted by $\idemp{S}$.

\begin{lemma}\label{inv smg: Clifford smg, lemma girth}
	Let $S$ be a Clifford semigroup and let $x,y\in S$. Then
	\begin{enumerate}
		\item We have $xy=yx$ if and only if $x^{-1}y=yx^{-1}$.
		
		\item If $xxy=xyx$, then $xy=yx$.
	\end{enumerate}
\end{lemma}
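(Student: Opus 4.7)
The proof will hinge on one structural observation that I would establish first: for each $x \in S$, the element $e \coloneqq xx^{-1} = x^{-1}x$ (equal by \eqref{preli: Clifford smg, x commutes with inverse}) is a central idempotent that acts as the two-sided identity on both $x$ and $x^{-1}$. Centrality follows from Theorem~\ref{preli: characterization Clifford smg}; idempotence and $ex = xe = x$ are a rearrangement of \eqref{preli: Clifford smg, regular}; and $ex^{-1} = x^{-1}e = x^{-1}$ follows by applying \eqref{preli: Clifford smg, regular} to $x^{-1}$ and using \eqref{preli: Clifford smg, inverse of an inverse}. With this toolkit, both parts become book-keeping.

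For part (1), I only need to prove the forward direction, since the reverse follows by replacing $x$ with $x^{-1}$ and using \eqref{preli: Clifford smg, inverse of an inverse}. Assuming $xy = yx$, I would sandwich the equation with copies of $x^{-1}$ to obtain $x^{-1}(xy)x^{-1} = x^{-1}(yx)x^{-1}$. The left-hand side reassociates to $ey x^{-1}$, and pushing $e$ to the right using centrality then using $x^{-1}e = x^{-1}$ collapses it to $yx^{-1}$. The right-hand side reassociates to $x^{-1}ye$; pushing $e$ to the left and using $x^{-1}e = x^{-1}$ collapses it to $x^{-1}y$.

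For part (2), the hypothesis $xxy = xyx$ says precisely that $x$ commutes with $xy$. I would multiply on the left by $x^{-1}$ to get $exy = eyx$. The left-hand side reassociates to $(ex)y = xy$ since $ex = x$. For the right-hand side, centrality of $e$ lets me rewrite $eyx$ as $y(ex) = yx$. Hence $xy = yx$. (Alternatively, one could deduce this from part (1) applied to the commuting pair $x, xy$, but the direct computation is shorter.)

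The only thing requiring care is being attentive to when $e$ acts as an identity (it only does so on $x$ and $x^{-1}$), though centrality is what lets one slide it into position next to those factors. I expect no genuine obstacle beyond this bookkeeping.
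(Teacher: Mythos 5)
Your proposal is correct and uses essentially the same ingredients as the paper's proof (regularity, the identity $xx^{-1}=x^{-1}x$, and centrality of idempotents in a Clifford semigroup); packaging them as the preliminary observation that $e=xx^{-1}$ is a central idempotent acting as an identity on $x$ and $x^{-1}$ is only a cosmetic reorganization of the paper's inline chains of equalities. Both the sandwiching argument for part (1) and the left-multiplication by $x^{-1}$ for part (2) check out step by step.
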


\begin{proof}
	\textbf{Part 1.} Suppose that $xy=yx$. Due to the fact that $S$ is a Clifford semigroup, we have
	\begin{align*}
		x^{-1}y&=\parens{x^{-1}\parens{x^{-1}}^{-1}x^{-1}}y& \bracks{\text{by \eqref{preli: Clifford smg, regular}}}\\
		&=\parens{x^{-1}xx^{-1}}y& \bracks{\text{by \eqref{preli: Clifford smg, inverse of an inverse}}}\\
		&=x^{-1}\parens{xx^{-1}}y& \bracks{\text{rearranging parentheses}}\\
		&=x^{-1}y\parens{xx^{-1}}& \bracks{\text{because } xx^{-1}\in\idemp{S}\subseteq\centre{S}}\\
		&=x^{-1}\parens{yx}x^{-1}& \bracks{\text{rearranging parentheses}}\\
		&=x^{-1}\parens{xy}x^{-1}& \bracks{\text{because } xy=yx}\\
		&=\parens{x^{-1}x}yx^{-1}& \bracks{\text{rearranging parentheses}}\\
		&=y\parens{x^{-1}x}x^{-1}& \bracks{\text{because } x^{-1}x\in\idemp{S}\subseteq\centre{S}}\\
		&=y\parens{x^{-1}\parens{x^{-1}}x^{-1}}& \bracks{\text{by \eqref{preli: Clifford smg, inverse of an inverse}}}\\
		&=yx^{-1}.& \bracks{\text{by \eqref{preli: Clifford smg, regular}}}
	\end{align*}
	
	Now suppose that $x^{-1}y=yx^{-1}$. Then it follows from what we proved earlier that $\parens{x^{-1}}^{-1}y=y\parens{x^{-1}}^{-1}$ and, consequently, by \eqref{preli: Clifford smg, inverse of an inverse}, we have $xy=yx$.
	
	\medskip
	
	\textbf{Part 2:} Suppose that $xxy=xyx$. Since $S$ is a Clifford semigroup we have
	\begin{align*}
		xy&=\parens{xx^{-1}x}y& \bracks{\text{by \eqref{preli: Clifford smg, regular}}}\\
		&=\parens{xx^{-1}}xy& \bracks{\text{rearranging parentheses}}\\
		&=\parens{x^{-1}x}xy& \bracks{\text{by \eqref{preli: Clifford smg, x commutes with inverse}}}\\
		&=x^{-1}\parens{xxy}& \bracks{\text{rearranging parentheses}}\\
		&=x^{-1}\parens{xyx}& \bracks{\text{because } xxy=xyx}\\
		&=\parens{x^{-1}x}yx& \bracks{\text{rearranging parentheses}}\\
		&=yx\parens{x^{-1}x}& \bracks{\text{because } x^{-1}x\in\idemp{S}\subseteq\centre{S}}\\
		&=yx. &\bracks{\text{by \eqref{preli: Clifford smg, regular}}} & \qedhere
	\end{align*}
\end{proof}

\begin{theorem}\label{inv smg: girth Clifford smg}
	Let $S$ be a finite non-commutative Clifford semigroup. If $\commgraph{S}$ contains at least one cycle, then $\girth{\commgraph{S}}=3$.
\end{theorem}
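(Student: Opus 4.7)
My plan is by contradiction: suppose $\girth{\commgraph{S}} = n \geq 4$ and fix a shortest cycle $x_1 - x_2 - \cdots - x_n - x_1$. The strategy is to combine Lemma~\ref{inv smg: Clifford smg, lemma girth} with the Clifford identity $xx^{-1}=x^{-1}x$ and the inverse-semigroup identity $(xy)^{-1}=y^{-1}x^{-1}$ to produce either a triangle or a cycle of length strictly smaller than $n$; each contradicts the choice of $n$. The argument splits on whether some vertex of the cycle differs from its inverse.

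\textbf{Case 1: some $x_i \neq x_i^{-1}$.} Take $i=1$ without loss of generality (after cyclically relabelling). I would first observe that $x_1^{-1}$ is itself a vertex of $\commgraph{S}$: if $x_1^{-1}$ lay in $\centre{S}$, Part 1 of Lemma~\ref{inv smg: Clifford smg, lemma girth} would force $x_1$ to lie in $\centre{S}$ as well. Part 1 then gives $x_1^{-1}\sim x_2$ and $x_1^{-1}\sim x_n$, and the Clifford identity gives $x_1\sim x_1^{-1}$. A short case analysis on where $x_1^{-1}$ lies -- outside the cycle, equal to $x_2$ or $x_n$, or equal to some intermediate $x_k$ with $3\leq k\leq n-1$ -- produces either the triangle $\{x_1,x_2,x_1^{-1}\}$ or a chord cutting the cycle down to a length strictly smaller than $n$.

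\textbf{Case 2: $x_i=x_i^{-1}$ for every $i$.} This is the main obstacle, because Part 1 is vacuous when every vertex is self-inverse. Here each $x_i^2=e_i$ is an idempotent, hence central. I would introduce
\[
u = x_1 x_3, \qquad v = x_3 x_1,
\]
and argue that $\{x_2,u,v\}$ is a triangle, which already contradicts $n\geq 4$. The edge $u\sim v$ is immediate from $v = x_3^{-1}x_1^{-1}=(x_1x_3)^{-1}=u^{-1}$ combined with the Clifford identity. For the edges $x_2\sim u$ and $x_2\sim v$ I would appeal to Part 2 of Lemma~\ref{inv smg: Clifford smg, lemma girth} with $x=x_2$ and $y=u$ (respectively $y=v$): using $x_1x_2=x_2x_1$, $x_2x_3=x_3x_2$, and the centrality of $x_2^2=e_2$, a short rearrangement gives $x_2\cdot x_2\cdot u=x_2\cdot u\cdot x_2$, and Part 2 then yields $x_2u=ux_2$.

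It remains to verify that $\{x_2,u,v\}$ really consists of three distinct non-central vertices. I would enumerate the possible degeneracies -- $u=v$; $u$ or $v$ equal to one of $x_1,x_2,x_3$; $u$ or $v$ in $\centre{S}$ -- and show that each one forces $x_1 x_3 = x_3 x_1$, that is, $x_1\sim x_3$, which is a chord yielding the triangle $x_1-x_2-x_3-x_1$ and contradicts $n\geq 4$. Collisions of the form $u\in\{x_1,x_2,x_3\}$ reduce to $u=v$ via $v=u^{-1}$ and $x_i=x_i^{-1}$. The subtlest degeneracy is $u\in\centre{S}$: from $ux_1=x_1u$ one extracts $x_1 x_1 x_3 = x_1 x_3 x_1$, and a second application of Part 2 of Lemma~\ref{inv smg: Clifford smg, lemma girth} (this time with $x=x_1$, $y=x_3$) immediately yields $x_1\sim x_3$. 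Throughout Case 2, the Clifford hypothesis that all idempotents are central is what makes $x_i^2=e_i\in\centre{S}$ and fuels both applications of Part 2; this is precisely what makes the case go through despite the failure of Part 1.
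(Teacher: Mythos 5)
Your proposal is correct and follows essentially the same route as the paper: the same case split on whether some $x_i$ differs from $x_i^{-1}$, the same use of both parts of Lemma~\ref{inv smg: Clifford smg, lemma girth}, and in the self-inverse case the same triangle on $x_2$, $x_1x_3$ and $(x_1x_3)^{-1}=x_3x_1$. The only differences are cosmetic (you argue by contradiction with a shortest cycle and route the adjacency $x_2\sim x_1x_3$ through Part~2, where the paper constructs a $3$-cycle directly and gets that adjacency by simply commuting $x_2$ past $x_1$ and $x_3$).
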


\begin{proof}
	Let $x_1-x_2-\cdots-x_n-x_1$ be a cycle in $\commgraph{S}$. We consider two cases:
	
	\smallskip
	
	\textit{Case 1:} Suppose that there exists $i\in\Xn$ such that $x_i\neq x_i^{-1}$. Assume, without loss of generality, that $i=1$. Since $x_1\in S\setminus\centre{S}$, then there exists $y\in S$ such that $x_1y\neq yx_1$. Hence, by part 1 of Lemma~\ref{inv smg: Clifford smg, lemma girth}, $x_1^{-1}y\neq yx_1^{-1}$ and, consequently, $x_1^{-1}\in S\setminus\centre{S}$ (that is, $x_i^{-1}$ is also a vertex of $\commgraph{S}$).
	
	Let $x\in\set{x_2,x_n}\setminus\set{x_1^{-1}}$. We have $x_1x=xx_1$, which implies, by part 1 of Lemma~\ref{inv smg: Clifford smg, lemma girth}, that $x_1^{-1}x=xx_1^{-1}$. Moreover, by \eqref{preli: Clifford smg, x commutes with inverse}, we have $x_1x_1^{-1}=x_1^{-1}x_1$. Thus $x_1-x-x_1^{-1}-x_1$ is a cycle (of length 3) in $\commgraph{S}$.
	
	\smallskip
	
	\textit{Case 2:} Suppose that $x_i=x_i^{-1}$ for all $i\in\Xn$. If $x_1x_3=x_3x_1$, then $x_1-x_2-x_3-x_1$ is a cycle (of length 3) in $\commgraph{S}$.
	
	Now assume that $x_1x_3\neq x_3x_1$. We have $\parens{x_1x_3}^{-1}=x_3^{-1}x_1^{-1}=x_3x_1\neq x_1x_3$ (because $S$ is an inverse semigroup and by \eqref{preli: inv smg, inverse product}). Then $x_2\neq x_1x_3$ and $x_2\neq\parens{x_1x_3}^{-1}$ (since $x_2=x_2^{-1}$). Furthermore, we have that $x_2$ commutes with $x_1x_3$ and $\parens{x_1x_3}^{-1}=x_3x_1$ (because $x_2$ commutes with $x_1$ and $x_3$) and, by \eqref{preli: Clifford smg, x commutes with inverse}, $x_1x_3$ commutes with $\parens{x_1x_3}^{-1}$. In addition, since $x_1x_3\neq x_3x_1$, then, by part 2 of Lemma~\ref{inv smg: Clifford smg, lemma girth}, we have $x_1\parens{x_1x_3}\neq \parens{x_1x_3}x_1$ and $x_3\parens{x_1x_3}^{-1}=x_3\parens{x_3x_1}\neq \parens{x_3x_1}x_3=\parens{x_1x_3}^{-1}x_3$, which implies that $x_1x_3,\parens{x_1x_3}^{-1}\in S\setminus\centre{S}$. Therefore $x_1x_3-x_2-\parens{x_1x_3}^{-1}-x_1x_3$ is a cycle (of length 3) in $\commgraph{G}$.
	
	\smallskip
	
	In both cases presented above we established that $\commgraph{S}$ contains a cycle of length $3$. Thus $\girth{\commgraph{S}}=3$.
\end{proof}

In the lemma stated below we show two results that are required to prove that the unique possible value for the girth of the commuting graph of an inverse semigroup is $3$ (Theorem~\ref{inv smg: girth inv smg}).

\begin{lemma}\label{inv smg: lemma girth inv smg}
	Let $S$ be an inverse subsemigroup of $\psym{X}$, for some set $X$. Then
	\begin{enumerate}
		\item If $S$ contains a non-central idempotent, then there exists $\alpha\in S$ such that $\dom\alpha\neq\im\alpha$.
		
		\item If there exists $\alpha\in S$ such that $\dom\alpha\neq\im\alpha$, then $\alpha\alpha^{-1}$ and $\alpha^{-1}\alpha$ are distinct non-central idempotents of $S$ which do not commute with $\alpha$; that is, which satisfy $\alpha\parens{\alpha\alpha^{-1}}\neq \parens{\alpha\alpha^{-1}}\alpha$ and $\alpha\parens{\alpha^{-1}\alpha}\neq\parens{\alpha^{-1}\alpha}\alpha$.
	\end{enumerate}
\end{lemma}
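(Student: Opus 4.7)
My plan is to address the two parts separately: the first by contrapositive, leveraging the Clifford characterization of Theorem~\ref{preli: characterization Clifford smg}, and the second by direct computation in $\psym{X}$.

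For part 1, I would argue the contrapositive. Suppose every element $\alpha \in S$ satisfies $\dom\alpha = \im\alpha$. For any partial bijection $\alpha$ on $X$, a direct unwinding of the definitions of partial composition and inverse shows that $\alpha\alpha^{-1}$ and $\alpha^{-1}\alpha$ are precisely the partial identity maps $\id{\dom\alpha}$ and $\id{\im\alpha}$ (in whichever order the composition convention dictates). Under our standing assumption, these two products coincide, so every $\alpha \in S$ satisfies \eqref{preli: Clifford smg, x commutes with inverse}. Combining this with the inverse-semigroup axioms already enjoyed by $S$, the semigroup $S$ satisfies all four defining identities of a Clifford semigroup. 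By Theorem~\ref{preli: characterization Clifford smg}, every idempotent of $S$ is then central, which is the contrapositive of the statement.

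For part 2, I start from the same explicit identification: $\alpha\alpha^{-1}$ and $\alpha^{-1}\alpha$ are the partial identities on $\dom\alpha$ and $\im\alpha$. Since these two subsets of $X$ are distinct by hypothesis, the two idempotents are distinct. For the non-commutation with $\alpha$, the key is that $\alpha$ is a partial \emph{bijection}, so $|\dom\alpha|=|\im\alpha|$; together with $\dom\alpha \neq \im\alpha$, this forces the symmetric difference to be non-empty on both sides, so $\dom\alpha \not\subseteq \im\alpha$ \emph{and} $\im\alpha \not\subseteq \dom\alpha$. Picking a witness $x \in \dom\alpha$ with $(x)\alpha \notin \dom\alpha$, one computes that $\alpha(\alpha\alpha^{-1})$ is undefined at $x$ whereas $(\alpha\alpha^{-1})\alpha = \alpha$ (by \eqref{preli: inv smg, regular}) is defined at $x$, yielding the required inequality. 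A symmetric witness handles $\alpha^{-1}\alpha$. Non-centrality is then immediate: a central element commutes with everything in $S$, but these idempotents already fail to commute with $\alpha \in S$.

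The main obstacle I expect is organizational: keeping the composition convention consistent throughout so that one really knows which of $\alpha\alpha^{-1}$, $\alpha^{-1}\alpha$ equals $\id{\dom\alpha}$ versus $\id{\im\alpha}$, and then ensuring the witness elements $x \in \dom\alpha$ with $(x)\alpha \notin \dom\alpha$ (and the symmetric witness) really live in the correct set. The underlying argument is short, but bookkeeping in $\psym{X}$ is where errors are easy to make; the cardinality observation $|\dom\alpha| = |\im\alpha|$ is what ensures both symmetric differences are non-empty and hence that \emph{both} idempotents — not just one — fail to commute with $\alpha$.
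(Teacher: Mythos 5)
Your proof is correct, and for Part 1 it takes a genuinely different route from the paper. The paper argues directly: starting from a non-central idempotent $e$ and a witness $\alpha$ with $e\alpha\neq\alpha e$, it shows that if $\dom\alpha=\im\alpha$ then the product $e\alpha$ itself satisfies $\im e\alpha\neq\dom e\alpha$, so a suitable element is produced explicitly. You instead prove the contrapositive: if every element satisfies $\dom\alpha=\im\alpha$, then $\alpha\alpha^{-1}=\id{\dom\alpha}=\id{\im\alpha}=\alpha^{-1}\alpha$, so the inverse semigroup $S$ additionally satisfies \eqref{preli: Clifford smg, x commutes with inverse} and is therefore Clifford, whence all idempotents are central by Theorem~\ref{preli: characterization Clifford smg}. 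This is shorter and more conceptual, and it also ties Part 1 to the structural dichotomy (Clifford versus non-Clifford) that the main theorem of the section exploits anyway; the paper's version buys an explicit element but at the cost of a more delicate computation with images and preimages. For Part 2 your argument is essentially the paper's in different clothing: the paper compares cardinalities of domains (e.g.\ $\abs{\dom\alpha\alpha\alpha^{-1}}=\abs{\im\alpha\cap\dom\alpha}<\abs{\dom\alpha}=\abs{\dom\alpha\alpha^{-1}\alpha}$), while you exhibit witness points in $\dom\alpha\setminus\im\alpha$ and $\im\alpha\setminus\dom\alpha$; both hinge on the same fact that $\abs{\dom\alpha}=\abs{\im\alpha}$ together with $\dom\alpha\neq\im\alpha$ forces \emph{both} set differences to be non-empty. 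Note that this step (in your version and in the paper's) tacitly assumes $\dom\alpha$ is finite — for infinite $X$ one can have $\im\alpha\subsetneq\dom\alpha$ with equal cardinality — but since the lemma is only ever applied to finite semigroups, where $X$ may be taken finite, this matches the paper's own implicit standing assumption and is not a gap relative to it. Your identification of $\alpha\alpha^{-1}$ and $\alpha^{-1}\alpha$ with $\id{\dom\alpha}$ and $\id{\im\alpha}$ under the right-action composition convention is consistent with the paper's usage ($\dom\alpha\alpha^{-1}=\dom\alpha$, $\dom\alpha^{-1}\alpha=\im\alpha$), and the deduction of non-centrality from non-commutation with $\alpha\in S$ is sound.
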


\begin{proof}
	\textbf{Part 1.} Suppose that $S$ contains a non-central idempotent. Let $e\in S$ be such idempotent. Then there exists $\alpha\in S$ such that $e\alpha\neq\alpha e$.
	
	If $\dom\alpha\neq\im\alpha$, then the result follows. Now suppose that $\dom\alpha=\im\alpha$. We have
	\begin{displaymath}
		\alpha|_{\im e\cap\dom\alpha}=e\alpha\neq\alpha e=\alpha|_{\parens{\im\alpha\cap \dom e}\alpha^{-1}},
	\end{displaymath}
	which implies that $\im e\cap\dom\alpha\neq \parens{\im\alpha\cap \dom e}\alpha^{-1}$. Therefore
	\begin{align*}
		\im e\alpha &=\parens{\im e\cap\dom\alpha}\alpha\\
		&\neq \parens{\parens{\im\alpha\cap \dom e}\alpha^{-1}}\alpha &\bracks{\text{since }\alpha \text{ is injective}}\\
		&=\im\alpha\cap \dom e\\
		&=\dom\alpha\cap \im e &\bracks{\text{since }\dom\alpha=\im\alpha \text{ and } \dom e=\im e}\\
		&=\parens{\dom\alpha\cap \im e}e^{-1}\\
		&=\dom e\alpha,
	\end{align*}
	which concludes the proof.

	\medskip
	
	\textbf{Part 2.} Let $\alpha\in S$ be such that $\dom\alpha\neq\im\alpha$. We have that $\alpha\alpha^{-1}$ and $\alpha^{-1}\alpha$ are idempotents and $\dom\alpha\alpha^{-1}=\dom\alpha\neq\im\alpha=\dom\alpha^{-1}\alpha$. Hence $\alpha\alpha^{-1}\neq\alpha^{-1}\alpha$. Additionally, we have
	\begin{align*}
		\abs{\dom\alpha\alpha\alpha^{-1}}&=\abs{\parens{\im\alpha\cap\dom\alpha\alpha^{-1}}\alpha^{-1}}\\
		&=\abs{\im\alpha\cap\dom\alpha\alpha^{-1}}& \bracks{\text{since } \alpha \text{ is injective}}\\
		&=\abs{\im\alpha\cap\dom\alpha} &\bracks{\text{since } \dom\alpha\alpha^{-1}=\dom\alpha}\\
		&<\abs{\dom\alpha}& \bracks{\text{since } \dom\alpha\neq\im\alpha \text{ and } \abs{\dom\alpha}=\abs{\im\alpha}}\\
		&=\abs{\dom\alpha\alpha^{-1}\alpha} &\bracks{\text{by \eqref{preli: inv smg, regular}}}
	\end{align*}
	and
	\begin{align*}
		\abs{\dom\alpha\alpha^{-1}\alpha}&=\abs{\dom\alpha} &\bracks{\text{by \eqref{preli: inv smg, regular}}}\\
		&>\abs{\im\alpha\cap\dom\alpha}& \bracks{\text{since } \dom\alpha\neq\im\alpha \text{ and } \abs{\dom\alpha}=\abs{\im\alpha}}\\
		&=\abs{\im\alpha^{-1}\alpha\cap\dom\alpha}& \bracks{\text{since } \im\alpha^{-1}\alpha=\dom\alpha^{-1}\alpha=\im\alpha}\\
		&=\abs{\parens{\im\alpha^{-1}\alpha\cap\dom\alpha}\parens{\alpha^{-1}\alpha}^{-1}} \kern -7mm & \bracks{\text{since } \alpha^{-1}\alpha \text{ is injective}}\\
		&=\abs{\dom\alpha^{-1}\alpha\alpha},
	\end{align*}
	which implies that $\alpha\parens{\alpha\alpha^{-1}}\neq\parens{\alpha\alpha^{-1}}\alpha$ and $\alpha\parens{\alpha^{-1}\alpha}\neq \parens{\alpha^{-1}\alpha}\alpha$. Therefore $\alpha\alpha^{-1},\alpha^{-1}\alpha\in S\setminus\centre{S}$.
\end{proof}

\begin{theorem}\label{inv smg: girth inv smg}
	Let $S$ be a finite non-commutative inverse semigroup. If $\commgraph{S}$ contains at least one cycle, then $\girth{\commgraph{S}}=3$.
\end{theorem}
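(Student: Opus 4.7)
The plan is to split into two cases based on whether $S$ contains a non-central idempotent. If every idempotent of $S$ is central, then since $S$ is regular by~\eqref{preli: inv smg, regular}, Theorem~\ref{preli: characterization Clifford smg} gives that $S$ is a Clifford semigroup, so Theorem~\ref{inv smg: girth Clifford smg} yields $\girth{\commgraph{S}}=3$.

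In the main case, $S$ contains a non-central idempotent. I would first invoke the Vagner--Preston Theorem (Theorem~\ref{preli: vagner--preston theorem inv}) to assume $S$ is an inverse subsemigroup of $\psym{X}$ for some set $X$. Part~1 of Lemma~\ref{inv smg: lemma girth inv smg} then produces $\alpha\in S$ with $\dom\alpha\neq\im\alpha$, and Part~2 shows that $e_1:=\alpha\alpha^{-1}$ and $e_2:=\alpha^{-1}\alpha$ are distinct non-central idempotents of $S$. Since any two idempotents of an inverse semigroup commute (Theorem~\ref{preli: characterization inv smg}), $e_1$ and $e_2$ are adjacent in $\commgraph{S}$, and my aim is to exhibit a third vertex adjacent to both to form a triangle.

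The natural candidate is the idempotent $f:=e_1e_2=\id{\dom\alpha\cap\im\alpha}\in S$. Since $\alpha$ is a partial injection we have $\abs{\dom\alpha}=\abs{\im\alpha}$, so $\dom\alpha\neq\im\alpha$ forces that neither set contains the other; thus $\dom\alpha\cap\im\alpha$ is a proper subset of each, and hence $f\notin\set{e_1,e_2}$. As an idempotent, $f$ commutes with both $e_1$ and $e_2$, so if $f\in S\setminus\centre{S}$ then $\set{e_1,e_2,f}$ is a $3$-clique in $\commgraph{S}$ and $\girth{\commgraph{S}}=3$.

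The main obstacle is the subcase in which $f$ turns out to be central. In that situation $C:=\dom\alpha\cap\im\alpha$ must be $S$-invariant, forcing in particular $\alpha(C)=C$; I would then instead consider the element $\alpha e_2=e_1\alpha\in S$, which equals the partial permutation $\alpha|_C$ (with $\dom\alpha|_C=\im\alpha|_C=C$), and verify by direct computation that it commutes with both $e_1$ and $e_2$; provided it is non-central and distinct from $e_1$ and $e_2$, it closes the triangle. Getting this last subcase to work cleanly --- where I expect the hypothesis that $\commgraph{S}$ already contains a cycle to become essential, guaranteeing enough non-central structure in $S$ --- is the crux of the argument.
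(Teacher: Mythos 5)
Your opening reduction and the first half of the main case are sound and match the paper's strategy: the Clifford case is dispatched by Theorem~\ref{inv smg: girth Clifford smg}, Lemma~\ref{inv smg: lemma girth inv smg} produces $\alpha$ with $\dom\alpha\neq\im\alpha$ together with the two adjacent non-central idempotents $e_1=\alpha\alpha^{-1}$ and $e_2=\alpha^{-1}\alpha$, and if the idempotent $f=e_1e_2$ (or any third non-central idempotent) exists you do get a triangle, since idempotents of an inverse semigroup commute. The gap is that the subcase you defer --- $f$ central --- is not a loose end but the entire difficulty, and your proposed repair does not work. First, a concrete computational error: $\alpha e_2=\alpha\parens{\alpha^{-1}\alpha}=\alpha$ and $e_1\alpha=\parens{\alpha\alpha^{-1}}\alpha=\alpha$ by \eqref{preli: inv smg, regular}, so the element you name is just $\alpha$ itself, which by part 2 of Lemma~\ref{inv smg: lemma girth inv smg} does \emph{not} commute with $e_1$ or $e_2$; the restriction $\alpha|_C$ with $C=\dom\alpha\cap\im\alpha$ is rather $f\alpha=e_1e_2\alpha$. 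Second, even with the corrected element there is no reason for $\alpha|_C$ to be non-central: $C$ may be empty, or $\alpha|_C$ may coincide with the central idempotent $f$ itself (for instance $\dom\alpha=\set{1,2}$, $\im\alpha=\set{1,3}$, $1\alpha=1$, $2\alpha=3$ gives $\alpha|_C=\id{\set{1}}$), so the triangle need not close.

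The paper's proof shows how much is actually needed at this point. In the hard case one knows $\idemp{S}\setminus\centre{S}=\set{e_1,e_2}$ and that the induced subgraph on $\set{\alpha,\alpha^{-1},e_1,e_2}$ has the single edge $e_1-e_2$; the hypothesis that $\commgraph{S}$ contains a cycle then forces the existence of further non-central vertices $\beta$, and the paper runs a five-way case analysis on such $\beta$ (according to whether $\dom\beta=\im\beta$, whether $\beta\beta^{-1}$ is central, and how $\dom\beta$ interacts with $\dom\alpha\cup\im\alpha$). In four of these cases a specific triangle through $e_1$ and $e_2$ is exhibited, and in the last it is shown that $M=S\setminus\set{\alpha,\alpha^{-1},e_1,e_2}$ is a Clifford subsemigroup whose commuting graph still contains the given cycle, so that Theorem~\ref{inv smg: girth Clifford smg} applies. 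Your sketch correctly anticipates that the cycle hypothesis must enter, but supplies no mechanism for it to do so; as written the argument does not go through.
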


\begin{proof}
	By the Vagner--Preston Theorem~\ref{preli: vagner--preston theorem inv} we can assume, without loss of generality, that $S$ is an inverse subsemigroup of $\psym{X}$, for some finite set $X$.
	
	We begin by dealing with two easy cases.
	
	Suppose that $S$ is a Clifford semigroup. Then, by Theorem~\ref{inv smg: girth Clifford smg}, $\girth{\commgraph{S}}=3$.
	
	Suppose that $S$ contains at least three non-central idempotents. Due to the fact that $S$ is an inverse semigroup, their idempotents commute and, consequently, $\commgraph{S}$ contains three vertices adjacent to each other. Thus $\girth{\commgraph{S}}=3$.
	
	Now we deal with the remaining (and more complicated) case. Suppose that $S$ is not a Clifford semigroup and $S$ contains at most two non-central idempotents. Since $S$ is regular but not a Clifford semigroup, then $S$ must contain a non-central idempotent (by Theorem~\ref{preli: characterization Clifford smg}), which guarantees (by part 1 of Lemma~\ref{inv smg: lemma girth inv smg}) the existence of $\alpha\in S$ such that $\dom\alpha\neq\im\alpha$. Then, by part 2 of Lemma~\ref{inv smg: lemma girth inv smg}, we have that $\alpha\alpha^{-1}$ and $\alpha^{-1}\alpha$ are distinct non-central idempotents of $S$ which satisfy $\alpha\parens{\alpha\alpha^{-1}}\neq \parens{\alpha\alpha^{-1}}\alpha$ and $\alpha\parens{\alpha^{-1}\alpha}\neq \parens{\alpha^{-1}\alpha}\alpha$.
	
	The fact that $\alpha\alpha^{-1}$ and $\alpha^{-1}\alpha$ are distinct non-central idempotents of $S$ implies that $\idemp{S}\setminus\centre{S}=\set{\alpha\alpha^{-1},\alpha^{-1}\alpha}$ (because $S$ contains at most two non-central idempotents).
	
	Furthermore, the fact that $\alpha\parens{\alpha\alpha^{-1}}\neq \parens{\alpha\alpha^{-1}}\alpha$ and $\alpha\parens{\alpha^{-1}\alpha}\neq \parens{\alpha^{-1}\alpha}\alpha$ implies, together with \eqref{preli: inv smg, inverse of an inverse} and \eqref{preli: inv smg, inverse product}, that
	\begin{gather*}
		\alpha^{-1}\parens{\alpha\alpha^{-1}} =\parens{\parens{\alpha\alpha^{-1}}\alpha}^{-1} \neq\parens{\alpha\parens{\alpha\alpha^{-1}}}^{-1} =\parens{\alpha\alpha^{-1}}\alpha^{-1}\\
		\shortintertext{and}
		\alpha^{-1}\parens{\alpha^{-1}\alpha} =\parens{\parens{\alpha^{-1}\alpha}\alpha}^{-1} \neq\parens{\alpha\parens{\alpha^{-1}\alpha}}^{-1} =\parens{\alpha^{-1}\alpha}\alpha^{-1}.
	\end{gather*}
	Moreover, $\alpha\alpha^{-1}$ and $\alpha^{-1}\alpha$ are idempotents and, consequently, they commute. Therefore the subgraph of $\commgraph{S}$ induced by $\set{\alpha,\alpha^{-1},\alpha\alpha^{-1},\alpha^{-1}\alpha}$ is:
	\begin{center}
		\begin{tikzpicture}[baseline=-5mm]
			
			\node[vertex] (alpha) at (0,0) {};
			\node[vertex] (alphainv) at (0,-1) {};
			\node[vertex] (idemp1) at (1,0) {};
			\node[vertex] (idemp2) at (1,-1) {};

			\node[anchor=south east] at (alpha) {$\alpha$};
			\node[anchor=north east] at (alphainv) {$\alpha^{-1}$};
			\node[anchor=south west] at (idemp1) {$\alpha\alpha^{-1}$};
			\node[anchor=north west] at (idemp2) {$\alpha^{-1}\alpha$};

			\draw[edge] (idemp1)--(idemp2);
			
		\end{tikzpicture}.
	\end{center}
	
	Since the subgraph of $\commgraph{S}$ induced by $\set{\alpha,\alpha^{-1},\alpha\alpha^{-1},\alpha^{-1}\alpha}$ does not contain cycles, then this means that $\commgraph{S}$ contains vertices which are not vertices of that subgraph. Hence $A=\parens{S\setminus\centre{S}}\setminus\set{\alpha,\alpha^{-1},\alpha\alpha^{-1},\alpha^{-1}\alpha}\neq\emptyset$. We consider the following scenarios:
	\begin{enumerate}
		\item There exists $\beta\in A$ such that $\dom\beta\neq\im\beta$.
		
		\item There exists $\beta\in A$ such that $\dom\beta=\im\beta$ and $\beta\beta^{-1}\in S\setminus\centre{S}$.
		
		\item There exists $\beta\in A$ such that $\dom\beta=\im\beta$ and $\dom\alpha\cup\im\alpha\nsubseteq\dom\beta$ and $\beta\beta^{-1}\in \centre{S}$.
		
		\item There exists $\beta\in A$ such that $\dom\beta=\im\beta$ and $\dom\alpha\cup\im\alpha\subseteq\dom\beta$ and $\parens{\dom\alpha}\beta=\dom\alpha$ and $\parens{\im\alpha}\beta=\im\alpha$ and $\beta\beta^{-1}\in \centre{S}$.
		
		\item None of the conditions above holds.
	\end{enumerate}
	It is clear that at least one of these cases holds. We are going to see that each case implies the existence of a cycle of length $3$ in $\commgraph{S}$.
	
	
	
	Before we analyse each one of the cases described above, we notice that due to the fact that $\dom\alpha\alpha^{-1}=\dom\alpha\neq\im\alpha=\dom\alpha^{-1}\alpha$ and $\abs{\dom\alpha\alpha^{-1}}=\abs{\dom\alpha^{-1}\alpha}$ we have that
	\begin{displaymath}
		\abs{\dom\parens{\alpha\alpha^{-1}}\parens{\alpha^{-1}\alpha}} =\abs{\dom\alpha\alpha^{-1}\cap\dom\alpha^{-1}\alpha} <\abs{\dom\alpha\alpha^{-1}} =\abs{\dom\alpha^{-1}\alpha},
	\end{displaymath}
	which implies that $\parens{\alpha\alpha^{-1}}\parens{\alpha^{-1}\alpha}\notin\set{\alpha\alpha^{-1},\alpha^{-1}\alpha}=\idemp{S}\setminus\centre{S}$ and, consequently, $\parens{\alpha\alpha^{-1}}\parens{\alpha^{-1}\alpha}\in\idemp{S}\cap\centre{S}$.
	
	\smallskip
	
	\textit{Case 1}: Suppose that condition 1 holds. Let $\beta\in A$ be such that $\dom\beta\neq\im\beta$. It follows from part 2 of Lemma~\ref{inv smg: lemma girth inv smg} that $\beta\beta^{-1}\neq\beta^{-1}\beta$ and $\beta\beta^{-1},\beta^{-1}\beta\in \idemp{S}\setminus\centre{S}=\set{\alpha\alpha^{-1},\alpha^{-1}\alpha}$. Assume, without loss of generality, that $\beta\beta^{-1}=\alpha\alpha^{-1}$ and $\beta^{-1}\beta=\alpha^{-1}\alpha$. Then
	\begin{gather*}
		\dom\beta=\dom\beta\beta^{-1}=\dom\alpha\alpha^{-1}=\dom\alpha\\
		\shortintertext{and}
		\im\beta=\dom\beta^{-1}\beta=\dom\alpha^{-1}\alpha=\im\alpha.
	\end{gather*}
	
	We are going to see that $\beta\alpha^{-1}$ is a non-central element of $S$, distinct from and commuting with $\alpha\alpha^{-1}$ and $\alpha^{-1}\alpha$.
	
	We have
	\begin{align*}
		\abs{\dom\beta\alpha^{-1}\alpha} &=\abs{\parens{\im\beta\cap\dom\alpha^{-1}\alpha}\beta^{-1}} \kern -5mm & \\
		&=\abs{\im\beta\cap\dom\alpha^{-1}\alpha} &\bracks{\text{because } \beta \text{ is injective}}\\
		&=\abs{\im\alpha} &\bracks{\text{because } \im\beta=\im\alpha=\dom\alpha^{-1}\alpha}\\
		&>\abs{\im\alpha\cap\dom\alpha}& \bracks{\text{because } \im\alpha\neq\dom\alpha \text{ and } \abs{\im\alpha}=\abs{\dom\alpha}}\\
		&\geqslant\abs{\im\alpha\cap\dom\beta\alpha^{-1}}& \bracks{\text{because } \dom\beta\alpha^{-1}\subseteq\dom\beta=\dom\alpha}\\
		&=\abs{\parens{\im\alpha\cap\dom\beta\alpha^{-1}}\alpha^{-1}} \kern -5mm &\bracks{\text{because } \alpha \text{ is injective}}\\
		&=\abs{\dom\alpha\beta\alpha^{-1}},
	\end{align*}
	which implies that $\dom\parens{\beta\alpha^{-1}}\alpha\neq\dom\alpha\parens{\beta\alpha^{-1}}$ and, consequently, $\parens{\beta\alpha^{-1}}\alpha\neq\alpha\parens{\beta\alpha^{-1}}$. Thus $\beta\alpha^{-1}\in S\setminus\centre{S}$.
	
	Additionally, we have
	\begin{align*}
		\parens{\beta\alpha^{-1}}\parens{\alpha^{-1}\alpha}
		&=\parens{\beta\parens{\alpha^{-1}\parens{\alpha^{-1}}^{-1}\alpha^{-1}}}\parens{\alpha^{-1}\alpha} \kern -5mm &\bracks{\text{by \eqref{preli: inv smg, regular}}} \\
		&=\parens{\beta\parens{\alpha^{-1}\alpha\alpha^{-1}}}\parens{\alpha^{-1}\alpha} &\bracks{\text{by \eqref{preli: inv smg, inverse of an inverse}}} \\
		&=\parens{\beta\alpha^{-1}}\parens{\alpha\alpha^{-1}}\parens{\alpha^{-1}\alpha} &\bracks{\text{rearranging parentheses}}\\
		&=\parens{\alpha\alpha^{-1}}\parens{\alpha^{-1}\alpha}\parens{\beta\alpha^{-1}} &\bracks{\text{because } \parens{\alpha\alpha^{-1}}\parens{\alpha^{-1}\alpha}\in\centre{S}}\\
		&=\parens{\alpha^{-1}\alpha}\parens{\alpha\alpha^{-1}} \parens{\beta\alpha^{-1}} &\bracks{\text{because } \alpha\alpha^{-1},\alpha^{-1}\alpha\in\idemp{S} \text{ commute}}\\
		&=\parens{\alpha^{-1}\alpha}\parens{\beta\beta^{-1}} \parens{\beta\alpha^{-1}} &\bracks{\text{because } \beta\beta^{-1}=\alpha\alpha^{-1}}\\
		&=\parens{\alpha^{-1}\alpha}\parens{\beta\beta^{-1} \beta}\alpha^{-1} &\bracks{\text{rearranging parentheses}}\\
		&=\parens{\alpha^{-1}\alpha}\parens{\beta\alpha^{-1}} &\bracks{\text{by \eqref{preli: inv smg, regular}}}
	\end{align*}
	and
	\begin{align*}
		\parens{\beta\alpha^{-1}}\parens{\alpha\alpha^{-1}} &=\beta\parens{\alpha^{-1}\alpha\alpha^{-1}} &\bracks{\text{rearranging parentheses}} \\
		&=\beta\parens{\alpha^{-1}\parens{\alpha^{-1}}^{-1}\alpha^{-1}} &\bracks{\text{by \eqref{preli: inv smg, inverse of an inverse}}}\\
		&=\beta\alpha^{-1} &\bracks{\text{by \eqref{preli: inv smg, regular}}}\\
		&=\parens{\beta\beta^{-1}\beta}\alpha^{-1} &\bracks{\text{by \eqref{preli: inv smg, regular}}}\\
		&=\parens{\beta\beta^{-1}}\parens{\beta\alpha^{-1}} &\bracks{\text{rearranging parentheses}} \\
		&=\parens{\alpha\alpha^{-1}}\parens{\beta\alpha^{-1}}. &\bracks{\text{because } \beta\beta^{-1}=\alpha\alpha^{-1}}
	\end{align*}
	
	Since we have
	\begin{align*}
		\alpha &\neq\beta &\bracks{\text{because } \beta\in A}\\
		&=\beta\beta^{-1}\beta &\bracks{\text{by \eqref{preli: inv smg, regular}}}\\
		&=\beta\alpha^{-1}\alpha, &\bracks{\text{because } \beta^{-1}\beta=\alpha^{-1}\alpha}
	\end{align*}
	then $\alpha\alpha^{-1}\neq\beta\alpha^{-1}$ (because, otherwise, we could use \eqref{preli: inv smg, inverse of an inverse} to obtain $\alpha=\alpha\alpha^{-1}\alpha=\beta\alpha^{-1}\alpha$, which is not possible). Furthermore, we have
	\begin{align*}
		\dom\beta\alpha^{-1}&=\parens{\im\beta\cap\dom\alpha^{-1}}\beta^{-1} \kern -8mm\\
		&=\parens{\im\beta}\beta^{-1} &\bracks{\text{because } \im\beta=\im\alpha=\dom\alpha^{-1}}\\
		&=\dom\beta\\
		&=\dom\alpha\alpha^{-1}\\
		&\neq\dom\alpha^{-1}\alpha, &\bracks{\text{because } \alpha\alpha^{-1},\alpha^{-1}\alpha\in\idemp{S} \text{ are distinct}}
	\end{align*}
	which implies that $\beta\alpha^{-1}\neq\alpha^{-1}\alpha$.
	
	We have proved that the distinct vertices $\alpha\alpha^{-1}$, $\alpha^{-1}\alpha$ and $\beta\alpha^{-1}$ of $\commgraph{S}$ are adjacent to each other. Therefore, $\alpha\alpha^{-1}-\beta\alpha^{-1}-\alpha^{-1}\alpha-\alpha\alpha^{-1}$ is a cycle (of length $3$) in $\commgraph{S}$ and, consequently, $\girth{\commgraph{S}}=3$.
	
	\smallskip
	
	\textit{Case 2:} Suppose that condition 2 holds. Let $\beta\in A$ be such that $\dom\beta=\im\beta$ and $\beta\beta^{-1}\in S\setminus\centre{S}$. We note that we have $\beta\beta^{-1}=\beta^{-1}\beta$ because $\dom\beta=\im\beta$. We have $\beta\beta^{-1}=\beta^{-1}\beta\in\idemp{S}\setminus\centre{S}=\set{\alpha\alpha^{-1},\alpha^{-1}\alpha}$. Assume, without loss of generality, that $\beta\beta^{-1}=\beta^{-1}\beta=\alpha\alpha^{-1}$.
	
	We have
	\begin{align*}
		\parens{\alpha\alpha^{-1}}\beta&=\parens{\beta\beta^{-1}}\beta &\bracks{\text{because } \beta\beta^{-1}=\alpha\alpha^{-1}}\\
		&=\beta\parens{\beta^{-1}\beta} &\bracks{\text{rearranging parentheses}}\\
		&=\beta\parens{\alpha\alpha^{-1}} &\bracks{\text{because } \beta^{-1}\beta=\alpha\alpha^{-1}}
	\end{align*}
	and
	\begin{align*}
		\beta\parens{\alpha^{-1}\alpha}&=\parens{\beta\beta^{-1}\beta}\parens{\alpha^{-1}\alpha} &\bracks{\text{by \eqref{preli: inv smg, regular}}}\\
		&=\beta\parens{\beta^{-1}\beta}\parens{\alpha^{-1}\alpha} &\bracks{\text{rearranging parentheses}}\\
		&=\beta\parens{\alpha\alpha^{-1}}\parens{\alpha^{-1}\alpha} &\bracks{\text{because } \beta^{-1}\beta=\alpha\alpha^{-1}}\\
		&=\parens{\alpha\alpha^{-1}}\parens{\alpha^{-1}\alpha}\beta &\bracks{\text{because } \parens{\alpha\alpha^{-1}}\parens{\alpha^{-1}\alpha}\in\centre{S}}\\
		&=\parens{\alpha^{-1}\alpha}\parens{\alpha\alpha^{-1}}\beta &\bracks{\text{because } \alpha\alpha^{-1},\alpha^{-1}\alpha\in\idemp{S} \text{ commute}}\\
		&=\parens{\alpha^{-1}\alpha}\parens{\beta\beta^{-1}}\beta &\bracks{\text{because } \beta\beta^{-1}=\alpha\alpha^{-1}}\\
		&=\parens{\alpha^{-1}\alpha}\parens{\beta\beta^{-1}\beta} &\bracks{\text{rearranging parentheses}}\\
		&=\parens{\alpha^{-1}\alpha}\beta. &\bracks{\text{by \eqref{preli: inv smg, regular}}}
	\end{align*}
	
	Since $\alpha\alpha^{-1}$, $\alpha^{-1}\alpha$ and $\beta$ are distinct non-central elements of $S$ that commute, then we have that $\alpha\alpha^{-1}-\beta-\alpha^{-1}\alpha-\alpha\alpha^{-1}$ is a cycle (of length $3$) in $\commgraph{S}$. Therefore $\girth{\commgraph{S}}=3$.
	
	\smallskip
	
	\textit{Case 3:} Suppose that condition 3 holds. Let $\beta\in A$ be such that $\dom\beta=\im\beta$ and $\dom\alpha\cup\im\alpha\nsubseteq\dom\beta$ and $\beta\beta^{-1}\in \centre{S}$. It follows from the fact that $\dom\beta=\im\beta$ that $\beta\beta^{-1}=\beta^{-1}\beta$.
	
	We have that
	\begin{align*}
		\abs{\dom\parens{\beta\beta^{-1}}\parens{\alpha\alpha^{-1}}} &=\abs{\parens{\im\beta\beta^{-1}\cap\dom\alpha\alpha^{-1}}\parens{\beta\beta^{-1}}^{-1}}\\
		&=\abs{\parens{\im\beta\beta^{-1}\cap\dom\alpha}\parens{\beta\beta^{-1}}^{-1}}\\
		&=\abs{\dom\parens{\beta\beta^{-1}}\alpha}\\
		&=\abs{\dom\alpha\parens{\beta\beta^{-1}}} &\bracks{\text{since } \beta\beta^{-1}\in\centre{S}}\\
		&=\abs{\parens{\im\alpha\cap\dom\beta\beta^{-1}}\alpha^{-1}}\\
		&=\abs{\im\alpha\cap\dom\beta\beta^{-1}} &\bracks{\text{since } \alpha \text{ is injective}}\\
		&=\abs{\dom\alpha^{-1}\alpha\cap\dom\beta\beta^{-1}}\\
		&=\abs{\dom\parens{\beta\beta^{-1}}\parens{\alpha^{-1}\alpha}},
	\end{align*}
	which implies that
	\begin{align*}
		\dom\alpha\nsubseteq\dom\beta &\iff \abs{\dom\beta\cap\dom\alpha}<\abs{\dom\alpha}\\
		&\iff \abs{\dom\beta\beta^{-1}\cap\dom\alpha\alpha^{-1}}<\abs{\dom\alpha}\\
		&\iff \abs{\dom\parens{\beta\beta^{-1}}\parens{\alpha\alpha^{-1}}}<\abs{\dom\alpha}\\
		&\iff \abs{\dom\parens{\beta\beta^{-1}}\parens{\alpha^{-1}\alpha}}<\abs{\dom\alpha} \\
		&\iff \abs{\dom\beta\beta^{-1}\cap\dom\alpha^{-1}\alpha}<\abs{\dom\alpha}\\
		&\iff \abs{\dom\beta\cap\im\alpha}<\abs{\dom\alpha}\\
		&\iff \abs{\dom\beta\cap\im\alpha}<\abs{\im\alpha} &\bracks{\text{because } \abs{\dom\alpha}=\abs{\im\alpha}}\\
		&\iff \im\alpha\nsubseteq\dom\beta.
	\end{align*}
	Furthermore, the fact that $\dom\alpha\cup\im\alpha\nsubseteq\dom\beta$ implies that $\dom\alpha\nsubseteq\dom\beta$ or $\im\alpha\nsubseteq\dom\beta$. Hence we must have 
	\begin{displaymath}
		\abs{\dom\parens{\beta\beta^{-1}}\parens{\alpha\alpha^{-1}}} =\abs{\dom\parens{\beta\beta^{-1}}\parens{\alpha^{-1}\alpha}} <\abs{\dom\alpha} =\abs{\dom\alpha\alpha^{-1}} =\abs{\dom\alpha^{-1}\alpha},
	\end{displaymath}
	which implies that $\parens{\beta\beta^{-1}}\parens{\alpha\alpha^{-1}},\parens{\beta\beta^{-1}}\parens{\alpha^{-1}\alpha}\notin\set{\alpha\alpha^{-1},\alpha^{-1}\alpha}=\idemp{S}\setminus\centre{S}$ and, consequently, $\parens{\beta\beta^{-1}}\parens{\alpha\alpha^{-1}},\parens{\beta\beta^{-1}}\parens{\alpha^{-1}\alpha}\in\idemp{S}\cap\centre{S}$. Therefore
	\begin{align*}
		\beta\parens{\alpha\alpha^{-1}} &= \parens{\beta\beta^{-1}\beta}\parens{\alpha\alpha^{-1}} &\bracks{\text{by \eqref{preli: inv smg, regular}}}\\
		&=\beta\parens{\beta\beta^{-1}}\parens{\alpha\alpha^{-1}} &\bracks{\text{since } \beta\beta^{-1}=\beta^{-1}\beta}\\
		&=\parens{\beta\beta^{-1}}\parens{\alpha\alpha^{-1}}\beta &\bracks{\text{since } \parens{\beta\beta^{-1}}\parens{\alpha\alpha^{-1}}\in\centre{S}}\\
		&=\parens{\alpha\alpha^{-1}}\parens{\beta\beta^{-1}}\beta &\bracks{\text{by \eqref{preli: inv smg, idemp commute}}}\\
		&=\parens{\alpha\alpha^{-1}}\beta &\bracks{\text{by \eqref{preli: inv smg, regular}}}
	\end{align*}
	and we can verify in a similar way that $\beta\parens{\alpha^{-1}\alpha}=\parens{\alpha^{-1}\alpha}\beta$.
	
	It follows from the fact that $\beta\in A$ that $\beta$ is a non-central element of $S$ distinct from $\alpha\alpha^{-1}$ and $\alpha^{-1}\alpha$. Therefore $\alpha\alpha^{-1}-\beta-\alpha^{-1}\alpha-\alpha\alpha^{-1}$ is a cycle (of length $3$) in $\commgraph{S}$ and, consequently, $\girth{\commgraph{S}}=3$.
	
	\smallskip
	
	\textit{Case 4:} Suppose that condition 4 holds. Let $\beta\in A$ be such that $\dom\beta=\im\beta$ and $\dom\alpha\cup\im\alpha\subseteq\dom\beta$ and $\parens{\dom\alpha}\beta=\dom\alpha$ and $\parens{\im\alpha}\beta=\im\alpha$ and $\beta\beta^{-1}\in \centre{S}$.
	
	We have that
	\begin{align*}
		\im\alpha\beta &=\parens{\im\alpha\cap\dom\beta}\beta\\
		&=\parens{\im\alpha}\beta &\bracks{\text{since } \im\alpha\subseteq\dom\beta}\\
		&=\im\alpha.
	\end{align*}
	This implies that the idempotents $\parens{\alpha\beta}^{-1}\parens{\alpha\beta}$ and $\alpha^{-1}\alpha$ have the same domain. Hence $\parens{\alpha\beta}^{-1}\parens{\alpha\beta}=\alpha^{-1}\alpha$. Similarly, we can use the fact that $\dom\alpha\subseteq\dom\beta$ and $\parens{\dom\alpha}\beta=\dom\alpha$ to conclude that $\im\alpha^{-1}\beta=\dom\alpha$ and, consequently, that $\parens{\alpha^{-1}\beta}^{-1}\parens{\alpha^{-1}\beta}=\alpha\alpha^{-1}$.
	
	Therefore
	\begin{align*}
		\parens{\alpha^{-1}\alpha}\beta &= \parens{\alpha^{-1}\alpha}\parens{\beta\beta^{-1}\beta} &\bracks{\text{by \eqref{preli: inv smg, regular}}}\\
		&=\parens{\beta\beta^{-1}}\parens{\alpha^{-1}\alpha}\beta &\bracks{\text{since } \beta\beta^{-1}\in\centre{S}}\\
		&= \beta\parens{\beta^{-1}\alpha^{-1}}\parens{\alpha\beta} &\bracks{\text{rearranging parentheses}}\\
		&=\beta\parens{\alpha\beta}^{-1}\parens{\alpha\beta} &\bracks{\text{by \eqref{preli: inv smg, inverse product}}}\\
		&=\beta\parens{\alpha^{-1}\alpha} &\bracks{\text{since } \parens{\alpha\beta}^{-1}\parens{\alpha\beta}=\alpha^{-1}\alpha}
	\end{align*}
	and we can check in a near-identical way that $\parens{\alpha\alpha^{-1}}\beta=\beta\parens{\alpha\alpha^{-1}}$.
	
	Due to the fact that $\beta\in\parens{S\setminus\centre{S}}\setminus\set{\alpha\alpha^{-1},\alpha^{-1}\alpha}$ we can conclude that $\alpha\alpha^{-1}-\beta-\alpha^{-1}\alpha-\alpha\alpha^{-1}$ is a cycle (of length $3$) in $\commgraph{S}$. Therefore $\girth{\commgraph{S}}=3$.
	
	\smallskip
	
	\textit{Case 5:} Suppose that conditions 1--4 do not hold. Let
	\begin{displaymath}
		M=A\cup\centre{S}=S\setminus\set{\alpha,\alpha^{-1},\alpha\alpha^{-1},\alpha^{-1}\alpha}.
	\end{displaymath}
	Our aim is to demonstrate that $M$ is a Clifford semigroup. In order to do this, we first prove the following statements:
	\begin{enumerate}
		\item[a)] $\dom\beta=\im\beta$ for all $\beta\in A$.
		\item[b)] $\beta\beta^{-1}\in\centre{S}$ for all $\beta\in A$. 
		\item[c)] $\dom\alpha\cup\im\alpha\subseteq\dom\beta$ for all $\beta\in A$.
		\item[d)] $\parens{\dom\alpha}\beta=\im\alpha$ for all $\beta\in A$.
		\item[e)] $\parens{\im\alpha}\beta=\dom\alpha$ for all $\beta\in A$.
		\item[f)] $\beta\gamma\neq\gamma\beta$ for all $\beta\in A$ and $\gamma\in\set{\alpha,\alpha^{-1},\alpha\alpha^{-1},\alpha^{-1}\alpha}$.
	\end{enumerate}
	
	Let $\beta\in A$. Since condition 1 does not hold, then we have $\dom\beta=\im\beta$; since condition 2 does not hold, then we have $\beta\beta^{-1}\in\centre{S}$; and, since condition 3 does not hold, then we have $\dom\alpha\cup\im\alpha\subseteq\dom\beta$. This proves that conditions a), b) and c) hold.
	
	Now we are going to see that conditions d) and e) hold. Let $\beta\in A$. Let $\gamma_1=\alpha\alpha^{-1}\beta$ and $\gamma_2=\alpha^{-1}\alpha\beta$. It follows from the fact that condition 4 does not hold --- and the fact that conditions a), b) and c) do --- that $\parens{\dom\alpha}\beta\neq\dom\alpha$ or $\parens{\im\alpha}\beta\neq\im\alpha$.
	
	By condition c) we have that $\dom\alpha\subseteq\dom\beta$ and, consequently,
	\begin{multline*}
		\dom\gamma_1=\dom\alpha\alpha^{-1}\beta=\parens{\im\alpha\alpha^{-1}\cap\dom\beta}\parens{\alpha\alpha^{-1}}^{-1}=\parens{\dom\alpha\cap\dom\beta}\parens{\alpha\alpha^{-1}}^{-1}\\
		=\parens{\dom\alpha}\parens{\alpha\alpha^{-1}}^{-1}=\parens{\im\alpha\alpha^{-1}}\parens{\alpha\alpha^{-1}}^{-1}=\dom\alpha\alpha^{-1}=\dom\alpha
	\end{multline*}
	and
	\begin{displaymath}
		\im\gamma_1=\im\alpha\alpha^{-1}\beta=\parens{\im\alpha\alpha^{-1}\cap\dom\beta}\beta=\parens{\dom\alpha\cap\dom\beta}\beta=\parens{\dom\alpha}\beta.
	\end{displaymath}
	Since, by condition c), we also have $\im\alpha\subseteq\dom\beta$, then we can prove in an analogous way that $\dom\gamma_2=\im\alpha$ and $\im\gamma_2=\parens{\im\alpha}\beta$.
	
	It follows from the fact that $\dom\gamma_1=\dom\alpha$ and $\dom\gamma_2=\im\alpha$ that the domains of the idempotents $\gamma_1\gamma_1^{-1}$ and $\gamma_2\gamma_2^{-1}$ are equal to the domains of the idempotents $\alpha\alpha^{-1}$ and $\alpha^{-1}\alpha$, respectively. Hence $\gamma_1\gamma_1^{-1}=\alpha\alpha^{-1}$ and $\gamma_2\gamma_2^{-1}=\alpha^{-1}\alpha$.
	
	Furthermore, we have $\im\gamma_1=\parens{\dom\alpha}\beta\neq\dom\alpha=\dom\gamma_1$ or $\im\gamma_2=\parens{\im\alpha}\beta\neq\im\alpha=\dom\gamma_2$. Then, by part 2 of Lemma~\ref{inv smg: lemma girth inv smg}, we have that $\gamma_1^{-1}\gamma_1\in \idemp{S}\setminus\centre{S}=\set{\alpha\alpha^{-1},\alpha^{-1}\alpha}$ or $\gamma_2^{-1}\gamma_2\in \idemp{S}\setminus\centre{S}=\set{\alpha\alpha^{-1},\alpha^{-1}\alpha}$. Part 2 of Lemma~\ref{inv smg: lemma girth inv smg} also implies that $\gamma_1\gamma_1^{-1}\neq\gamma_1^{-1}\gamma_1$ or $\gamma_2\gamma_2^{-1}\neq\gamma_2^{-1}\gamma_2$ and, consequently, that $\gamma_1^{-1}\gamma_1=\alpha^{-1}\alpha$ or $\gamma_2^{-1}\gamma_2=\alpha\alpha^{-1}$. Thus we have either
	\begin{gather*}
		\parens{\dom\alpha}\beta=\im\gamma_1=\im\gamma_1^{-1}\gamma_1=\im\alpha^{-1}\alpha=\im\alpha\\
		\shortintertext{or}
		\parens{\im\alpha}\beta=\im\gamma_2=\im\gamma_2^{-1}\gamma_2=\im\alpha\alpha^{-1}=\dom\alpha.
	\end{gather*}
	
	To summarize, we just proved the following: if $\parens{\dom\alpha}\beta\neq\dom\alpha$, then $\parens{\dom\alpha}\beta=\im\alpha$; and, if $\parens{\im\alpha}\beta\neq\im\alpha$, then $\parens{\im\alpha}\beta=\dom\alpha$. Therefore we have
	\begin{align*}
		\parens{\dom\alpha}\beta\neq\dom\alpha &\implies \parens{\dom\alpha}\beta=\im\alpha\\
		&\implies \parens{\im\alpha}\beta\neq\im\alpha & \bracks{\text{since } \dom\alpha\neq\im\alpha \text{ and } \beta \text{ is injective}}\\
		&\implies \parens{\im\alpha}\beta=\dom\alpha\\
		&\implies \parens{\dom\alpha}\beta\neq\dom\alpha, & \bracks{\text{since } \dom\alpha\neq\im\alpha \text{ and } \beta \text{ is injective}}
	\end{align*}
	which proves that $\parens{\dom\alpha}\beta\neq\dom\alpha$ if and only if $\parens{\im\alpha}\beta\neq\im\alpha$. Since at least one of the inequalities holds, then both inequalities hold and, consequently, we have $\parens{\dom\alpha}\beta=\im\alpha$ and $\parens{\im\alpha}\beta=\dom\alpha$, which proves conditions d) and e).

	
	
	Finally, we verify that condition f) also holds. Let $\beta\in A$. We have
	\begin{align*}
		\im\beta\alpha &=\parens{\im\beta\cap\dom\alpha}\alpha\\
		&=\parens{\dom\beta\cap\dom\alpha}\alpha &\bracks{\text{by a)}}\\
		&=\parens{\dom\alpha}\alpha &\bracks{\text{since, by c), } \dom\alpha\subseteq\dom\beta}\\
		&=\im\alpha\\
		&\neq\dom\alpha\\
		&=\parens{\im\alpha}\beta &\bracks{\text{by e)}}\\
		&=\parens{\im\alpha\cap\dom\beta}\beta &\bracks{\text{since, by c), } \im\alpha\subseteq\dom\beta}\\
		&=\im\alpha\beta
	\end{align*}
	and
	\begin{align*}
		\im\beta\parens{\alpha\alpha^{-1}} &=\parens{\im\beta\cap\dom\alpha\alpha^{-1}}\alpha\alpha^{-1}\\
		&=\parens{\dom\beta\cap\dom\alpha\alpha^{-1}}\alpha\alpha^{-1} \kern-5mm &\bracks{\text{by a)}}\\
		&=\parens{\dom\alpha\alpha^{-1}}\alpha\alpha^{-1} &\bracks{\text{since, by c), } \dom\alpha\alpha^{-1}=\dom\alpha\subseteq\dom\beta}\\
		&=\im\alpha\alpha^{-1}\\
		&=\dom\alpha\\
		&\neq\im\alpha\\
		&=\parens{\dom\alpha}\beta &\bracks{\text{by d)}}\\
		&=\parens{\im\alpha\alpha^{-1}\cap\dom\beta}\beta &\bracks{\text{since, by c), } \im\alpha\alpha^{-1}=\dom\alpha\subseteq\dom\beta}\\
		&=\im\parens{\alpha\alpha^{-1}}\beta,
	\end{align*}
	which implies that $\beta\alpha\neq\alpha\beta$ and $\beta\parens{\alpha\alpha^{-1}}\neq\parens{\alpha\alpha^{-1}}\beta$. We can verify in an identical way that $\beta\alpha^{-1}\neq\alpha^{-1}\beta$ and $\beta\parens{\alpha^{-1}\alpha}\neq\parens{\alpha^{-1}\alpha}\beta$.

	Next, we establish that $M$ is a Clifford semigroup.
	
	Let $\beta,\beta'\in A$. We have that
	\begin{align*}
		\abs{\dom\beta\beta'}&=\abs{\parens{\im\beta\cap\dom\beta'}\beta^{-1}} \kern -7mm\\
		&= \abs{\im\beta\cap\dom\beta'} &\bracks{\text{since } \beta \text{ is injective}}\\
		&= \abs{\dom\beta\cap\dom\beta'} &\bracks{\text{by a)}}\\
		&\geqslant \abs{\dom\alpha\cup\im\alpha} &\bracks{\text{since, by c), } \dom\alpha\cup\im\alpha\subseteq\dom\beta\cap\dom\beta'}\\
		&>\abs{\dom\alpha}. &\bracks{\text{since } \dom\alpha\neq\im\alpha}
	\end{align*}
	Since $\abs{\dom\alpha}=\abs{\dom\alpha^{-1}}=\abs{\dom\alpha\alpha^{-1}}=\abs{\dom\alpha^{-1}\alpha}$, then we can conclude that $\beta\beta'\in S\setminus\set{\alpha,\alpha^{-1},\alpha\alpha^{-1},\alpha^{-1}\alpha}=M$.
	
	Let $\beta\in A$ and $\beta'\in\centre{S}$. Then we have $\beta\parens{\beta\beta'}=\beta\parens{\beta'\beta}=\parens{\beta\beta'}\beta$, which implies, by f), that $\beta'\beta=\beta\beta'\in S\setminus\set{\alpha,\alpha^{-1},\alpha\alpha^{-1},\alpha^{-1}\alpha}=M$.
	
	Since we also have $\centre{S}^2\subseteq\centre{S}\subseteq M$, then we can conclude that $M=A\cup\centre{S}$ is a subsemigroup of $S$. It is easy to see that for all $\beta\in M=S\setminus\set{\alpha,\alpha^{-1},\alpha\alpha^{-1},\alpha^{-1}\alpha}$ we have $\beta^{-1}\in S\setminus\set{\alpha,\alpha^{-1},\alpha\alpha^{-1},\alpha^{-1}\alpha}=M$, which implies that $M$ is regular. In addition, for all $\beta\in A$ we have
	\begin{align*}
		\parens{\dom\alpha}\beta^2 &=\parens{\im\alpha}\beta &\bracks{\text{by d)}}\\
		&=\dom\alpha &\bracks{\text{by e)}}\\
		&\neq \im\alpha\\
		&=\parens{\dom\alpha}\beta, &\bracks{\text{by d)}}
	\end{align*}
	which implies that $\beta^2\neq\beta$ for all $\beta\in A$, that is, $\idemp{M}\cap A=\emptyset$. Consequently, $\idemp{M}\subseteq M\setminus A=\centre{S}\subseteq\centre{M}$ and, by Theorem~\ref{preli: characterization Clifford smg}, $M$ is a Clifford semigroup. 
	
	At last, we ascertain that $\commgraph{S}$ contains a cycle of length $3$. Let $t_1-t_2-\cdots-t_n-t_1$ be a cycle in $\commgraph{S}$. We saw earlier that the subgraph of $\commgraph{S}$ induced by $\set{\alpha,\alpha^{-1},\alpha\alpha^{-1},\alpha^{-1}\alpha}$ contains no cycles. Furthermore, it follows from condition f) that there is no vertex belonging to $A=\parens{S\setminus\centre{S}}\setminus\set{\alpha,\alpha^{-1},\alpha\alpha^{-1},\alpha^{-1}\alpha}$ that is adjacent to a vertex belonging to $\set{\alpha,\alpha^{-1},\alpha\alpha^{-1},\alpha^{-1}\alpha}$. Thus all the cycles of $\commgraph{S}$ (and, in particular, the cycle $t_1-t_2-\cdots-t_n-t_1$) are also cycles of the subgraph of $\commgraph{S}$ induced by $A$. Let $\mathcal{C}$ be that subgraph. Due to the fact that $M\setminus\centre{M}\subseteq A$ we have that $\commgraph{M}$ is a subgraph of $\mathcal{C}$.
	
	\smallskip
	
	\textsc{Sub-case 1:} Assume that $t_1,\ldots,t_n\in M\setminus\centre{M}$. Then $t_1-t_2-\cdots-t_n-t_1$ is a cycle in $\commgraph{M}$. Since $M$ is a Clifford semigroup, then Theorem~\ref{inv smg: girth Clifford smg} guarantees the existence of a cycle of length $3$ in $\commgraph{M}$. Thus $\mathcal{C}$ --- and, consequently, $\commgraph{S}$ --- contains a cycle of length $3$. Therefore $\girth{\commgraph{S}}=3$.
	
	\smallskip
	
	\textsc{Sub-case 2:} Assume that there exists $i\in\Xn$ such that $t_i\in\centre{M}$. We have that there exist distinct $j,k\in\Xn\setminus\set{i}$ such that $t_j$ is adjacent to $t_k$ (since $t_1-t_2-\cdots-t_n-t_1$ is a cycle in $\mathcal{C}$ and $n\geqslant 3$). Then $t_i-t_j-t_k-t_i$ is a cycle (of length $3$) in $\mathcal{C}$ (and in $\commgraph{S}$). Thus $\girth{\commgraph{S}}=3$.
\end{proof}

We observe that it is easy to find groups (and, consequently, Clifford and inverse semigroups) whose commuting graph has cycles (see, for instance, Figure~\ref{inv smg, Figure: commuting graph of A_4}, which shows the commuting graph of the alternating group over $\set{1,2,3,4}$). There are also groups (and Clifford/inverse semigroups) that contain no cycles --- the symmetric group over $\set{1,2,3}$ is one example (see Figure~\ref{inv smg, Figure: commuting graph of S_3}).

In the last theorem of this section we construct, for every even integer $n\geqslant 4$, a completely regular subsemigroup of $\tr{X}$ (for a convenient finite set $X$) whose commuting graph has a unique cycle that has length $n$.

\begin{theorem}\label{com reg: girth}
	For each $n\in\mathbb{N}$ such that $n\geqslant 2$, there exists a (finite non-commutative) completely regular semigroup whose commuting graph has girth equal to $2n$.
\end{theorem}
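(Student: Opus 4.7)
The plan is to construct, for each $n\geq 2$, an explicit finite non-commutative completely regular semigroup $S_n$, realised as a subsemigroup of $\tr{X}$ for a suitable finite set $X$, whose commuting graph has a single cycle, of length exactly $2n$. Since complete regularity is equivalent (by Theorem~\ref{preli: comp reg, every element in subgroup}) to every element lying in some subgroup, the cleanest route is to make $S_n$ a band: all generators and all products will be idempotent, and each element then lies in the trivial (singleton) subgroup it generates. This also excludes the Clifford case ruled out by Theorem~\ref{inv smg: girth Clifford smg}, because a band large enough to carry a commutativity cycle of length $\geq 4$ necessarily has non-central idempotents.

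The design idea is to pick $2n$ specific idempotent transformations $\varepsilon_1,\ldots,\varepsilon_{2n}$ of $X$ whose commuting relation realises the $2n$-cycle: $\varepsilon_i\varepsilon_j=\varepsilon_j\varepsilon_i$ iff $i-j\equiv 0,\pm 1\pmod{2n}$. A natural template is to take each $\varepsilon_i$ to be a retraction of $X$ onto a subset $A_i\subseteq X$, with the subsets arranged so that $A_i$ overlaps with $A_{i\pm 1}$ in a way that makes the retractions compatible on the overlap (forcing $\varepsilon_i\varepsilon_{i\pm 1}=\varepsilon_{i\pm 1}\varepsilon_i$), while for $|i-j|\not\equiv 0,\pm 1$ the retractions disagree on some element of $X$. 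One would then close under multiplication (keeping the result idempotent) by including the "defective" products $\varepsilon_i\varepsilon_j$ for non-adjacent $i,j$ as auxiliary elements of $S_n$, together with one or more central idempotents adjoined to ensure that the non-commutativity hypothesis is satisfied and to pin down $\centre{S_n}$.

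The main work is then to verify three things: (i) that $S_n$ is closed under multiplication and is a band (hence completely regular by Theorem~\ref{preli: comp reg, every element in subgroup}); (ii) that $\centre{S_n}$ is as expected and each $\varepsilon_i$ is non-central; and (iii) that $\commgraph{S_n}$ has the cycle $\varepsilon_1-\varepsilon_2-\cdots-\varepsilon_{2n}-\varepsilon_1$ and no shorter cycle. The cycle itself is built into the design; the existence of a $2n$-cycle, and hence the upper bound $\girth{\commgraph{S_n}}\leq 2n$, is immediate once (i) and (ii) are established.

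The main obstacle is the girth lower bound: ruling out cycles in $\commgraph{S_n}$ of length strictly less than $2n$. The commuting relation on $\{\varepsilon_1,\ldots,\varepsilon_{2n}\}$ is bipartite with respect to the natural parity 2-colouring, which rules out all odd cycles among the $\varepsilon_i$'s; but the shorter even cycles $4,6,\ldots,2n-2$, as well as any short cycle passing through the auxiliary defective products or central elements, must be excluded by direct case analysis. Concretely, one would show that each auxiliary (non-$\varepsilon_i$) vertex of $\commgraph{S_n}$ is adjacent to at most one $\varepsilon_i$, so that no chord or shortcut can be introduced through it, and that no two auxiliary vertices commute with one another unless they are both central. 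The precise choice of the subsets $A_i$ and of the ambient set $X$ must be made to keep this combinatorial bookkeeping tractable, and this is where the detail of the construction matters most.
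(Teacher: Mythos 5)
Your overall strategy is the same as the paper's: build a band of idempotent transformations inside $\tr{X}$ (so that complete regularity follows from Theorem~\ref{preli: comp reg, every element in subgroup} via the singleton subgroups), arrange the commuting relation so that $\commgraph{S}$ contains a cycle of length $2n$ and nothing shorter. However, as written the proposal has a genuine gap: the construction itself is never given, and the construction \emph{is} the proof. You do not exhibit the set $X$, the subsets $A_i$, or the retractions $\varepsilon_i$; you do not verify that adjacent retractions commute while non-adjacent ones do not; you do not show that the set is closed under composition; and you do not carry out the case analysis excluding shorter cycles through the ``auxiliary defective products.'' You acknowledge this (``this is where the detail of the construction matters most''), but that is precisely the content of the theorem, so what remains is a plausible blueprint rather than a proof. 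It is not even clear that your specific template is realizable: requiring $2n$ idempotent retractions with $\varepsilon_i\varepsilon_j=\varepsilon_j\varepsilon_i$ exactly when $i-j\equiv 0,\pm1\pmod{2n}$, \emph{and} a multiplicative closure that adds no chords, is a nontrivial combinatorial demand, and no candidate family $\set{A_i}$ is proposed that meets it.

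For comparison, the paper's construction (for $n\geqslant 3$) does not use $2n$ elements of the same kind. It partitions $X$ into $n$ blocks $X_0,\ldots,X_{n-1}$ of size $n-1$ and takes $n$ rank-$n$ idempotents $\alpha_i$ together with $n(n-1)$ constant maps $\beta_i^j$; the $2n$-cycle alternates between the $\alpha_i$ and the rank-one maps $\beta_i^1$, while the remaining $\beta_i^j$ ($j\geqslant 2$) are pendant vertices. Crucially, the products collapse onto existing elements ($\alpha_i\alpha_k=\alpha_k$, $\beta_i^j\beta_k^m=\beta_k^m$, $\alpha_i\beta_k^m=\beta_k^m$, and $\beta_k^m\alpha_i$ equals some $\beta_k^l$), so closure is automatic and no auxiliary vertices or adjoined central idempotents are needed at all --- the issues you flag as the ``main obstacle'' are dissolved by the choice of generators rather than confronted by case analysis. (The case $n=2$ is handled by an explicit four-element band on $\set{1,2,3}$.) To repair your proposal you would need to supply a concrete family realizing your commuting pattern and verify closure and the absence of short cycles, or else adopt a collapse-friendly design of this kind.
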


\begin{proof}
	We divide the proof into two parts, In the first part we will prove the theorem for $n=2$ and, in the second part, we will prove the theorem for $n\geqslant 3$.
	
	\medskip
	
	\textbf{Part 1.} Let $X=\set{1,2,3}$. We consider the transformations
	\begin{equation}\label{comp reg: girth 4 transformations}
		\alpha_1=\begin{pmatrix}
			1&2&3\\
			1&1&1
		\end{pmatrix}, \quad
		\alpha_2=\begin{pmatrix}
			1&2&3\\
			2&2&2
		\end{pmatrix}, \quad
		\beta_1=\begin{pmatrix}
			1&2&3\\
			1&2&1
		\end{pmatrix}, \quad
		\beta_2=\begin{pmatrix}
			1&2&3\\
			1&2&2
		\end{pmatrix}
	\end{equation}
	whose products are indicated in Table~\ref{comp reg: table}. Let $S=\set{\alpha_1,\alpha_2,\beta_1,\beta_2}$. It is clear, by observation of Table~\ref{comp reg: table}, that $S$ is a subsemigroup of idempotents of $\tr{\set{1,2,3}}$. Hence $\set{\alpha_1}$, $\set{\alpha_2}$, $\set{\beta_1}$ and $\set{\beta_2}$ are subgroups of $\tr{\set{1,2,3}}$ and, consequently, by Theorem~\ref{preli: comp reg, every element in subgroup}, we have that $S$ is completely regular. In addition, $S$ is not commutative (for example, we have $\alpha_1\alpha_2\neq\alpha_2\alpha_1$). In Figure~\ref{comp reg: figure, cycle length 4} we have an illustration of $\commgraph{S}$, which is a cycle graph of length $4$. Therefore $\girth{\commgraph{S}}=4$.
	
	\begin{table}[h!bt]
		\centering
		\begin{NiceTabular}{c|cccc}
			& $\alpha_1$ & $\alpha_2$ & $\beta_1$ & $\beta_2$\\
			\midrule
			$\alpha_1$ & $\alpha_1$ & $\alpha_2$ & $\alpha_1$ & $\alpha_1$\\
			$\alpha_2$ & $\alpha_1$ & $\alpha_2$ & $\alpha_2$ & $\alpha_2$ \\
			$\beta_1$ & $\alpha_1$ & $\alpha_2$ & $\beta_1$ & $\beta_1$ \\
			$\beta_2$ & $\alpha_1$ & $\alpha_2$ & $\beta_2$ & $\beta_2$ \\
		\end{NiceTabular}
		\caption{Products of $\alpha_1$, $\alpha_2$, $\beta_1$, $\beta_2$ defined in \eqref{comp reg: girth 4 transformations}.}
		\label{comp reg: table}
	\end{table}

	\begin{figure}[h!bt]
		\centering
		\begin{tikzpicture}
			
			\node[vertex] (a1) at (0,0) {};
			\node[vertex] (b1) at (1.5,0) {};
			\node[vertex] (b2) at (0,-1.5) {};
			\node[vertex] (a2) at (1.5,-1.5) {};
			
			\node[anchor=south east] at (a1) {$\alpha_1$};
			\node[anchor=north west] at (a2) {$\alpha_2$};
			\node[anchor=south west] at (b1) {$\beta_1$};
			\node[anchor=north east] at (b2) {$\beta_2$};
			
			\begin{scope}[edge]
				\draw (a1) -- (b1);
				\draw (b1) -- (a2);
				\draw (a2) -- (b2);
				\draw (b2) -- (a1);
			\end{scope}
			
		\end{tikzpicture}
		\caption{Commuting graph of a completely regular semigroup with girth equal to $4$.}
		\label{comp reg: figure, cycle length 4}
	\end{figure}
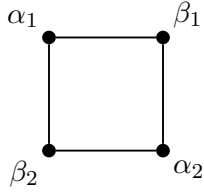
	
	\medskip
	
	\textbf{Part 2.} Let $n\in\mathbb{N}$ be such that $n\geqslant 3$. For each $i\in\set{0,\ldots,n-1}$ let $X_i=\set{x_i^1,\ldots,x_i^{n-1}}$ (the superscripts are additional indices, not exponents). Let $X=\bigcup_{i=0}^{n-1} X_i$. It is clear that $\set{X_i}_{i=0}^{n-1}$ is a partition of $X$.
	
	We are going to construct a subsemigroup of idempotents of $\tr{X}$. In order to do that, we first introduce $n^2$ transformations of $\tr{X}$. For each $i\in\set{0,\ldots,n-1}$ we define $\alpha_i$ as the transformation such that:
	\begin{align*}
		&X_i\alpha_i=\set{x_i^1}\\
		&X_{i+j\bmod{n}}\alpha_i=\set{x_{i+j \bmod{n}}^j}, \text{ } j=1,\ldots,n-1.
	\end{align*}
	For each $i\in\set{0,\ldots,n-1}$ and $j\in\X{n-1}$ we also define $\beta_i^j$ as the transformation such that $\im \beta_i^j=\set{x_i^j}$ (the superscript in $\beta_i^j$ is an additional index, not an exponent). Let
	\begin{displaymath}
		S=\gset{\alpha_i}{i=0,\ldots,n-1} \cup \gset{\beta_i^j}{i=0,\ldots,n-1 \text{ and } j=1,\ldots,n-1}.
	\end{displaymath}
	We aim to prove that $S$ is a (non-commutative) subsemigroup of idempotents of $\tr{X}$.

	First, we are going to confirm that the $n^2$ transformations we defined are pairwise distinct. We note that for all $i,k\in\set{0,\ldots,n-1}$ and $j\in\X{n-1}$ we have that $\beta_i^j$ has rank $1$ and $\alpha_k$ has rank $n\geqslant 3$. Hence $\beta_i^j\neq\alpha_k$ for all $i,k\in\set{0,\ldots,n-1}$ and $j\in\X{n-1}$. Moreover, for all $i,k\in\set{0,\ldots,n-1}$ and $j,m\in\X{n-1}$ such that $\parens{i,j}\neq\parens{k,m}$ we have that $\im\beta_i^j=\set{x_i^j}\neq \set{x_k^m}=\im\beta_k^m$, which implies that $\beta_i^j\neq\beta_k^m$ for all $i,k\in\set{0,\ldots,n-1}$ and $j,m\in\X{n-1}$ such that $\parens{i,j}\neq\parens{k,m}$. The only thing left to check is that $\alpha_i\neq\alpha_k$, for all distinct $i,k\in\set{0,\ldots,n-1}$. Let $i,k\in\set{0,\ldots,n-1}$ be such that $i<k$. We consider two cases.
	
	\smallskip
	
	\textit{Case 1:} Assume that $k=i+1$. Then we have
	\begin{align*}
		X_i\alpha_k &=X_{k-1}\alpha_k &\bracks{\text{since } k=i+1}\\
		&=X_{k+\parens{n-1}\bmod{n}}\alpha_k &\bracks{\text{since } k-1=i\in\set{0,\ldots,n-1}}\\
		&=\set{x_{k+\parens{n-1}\bmod{n}}^{n-1}}\\
		&=\set{x_{i+n\bmod{n}}^{n-1}} &\bracks{\text{since } k=i+1}\\
		&=\set{x_{i}^{n-1}} &\bracks{\text{since } i\in\set{0,\ldots,n-1}}\\
		&\neq\set{x_i^{1}} &\bracks{\text{since } n\geqslant 3}\\
		&=X_i\alpha_i,
	\end{align*}
	which implies that $\alpha_i\neq\alpha_k$.
	
	\smallskip
	
	\textit{Case 2:} Assume that $k>i+1$. Then we have
	\begin{align*}
		X_k\alpha_i &=X_{k\bmod{n}}\alpha_i &\bracks{\text{since } k\in\set{0,\ldots,n-1}}\\
		&=X_{i+\parens{k-i}\bmod{n}}\alpha_i\\
		&=\set{x_{i+\parens{k-i}\bmod{n}}^{k-i}} &\bracks{\text{since } k-i\in\set{0,\ldots,n-1}}\\
		&=\set{x_{k}^{k-i}}\\
		&\neq\set{x_k^1} &\bracks{\text{since } k-i>i+1-i=1}\\
		&=X_k\alpha_k,
	\end{align*}
	which implies that $\alpha_i\neq\alpha_k$.
	
	\smallskip
	
	Now we are going to check that $S$ is a semigroup. With this goal in mind, we are going to prove that for all $i,k\in\set{0,\ldots,n-1}$ and $j,m\in\X{n-1}$ we have:
	\begin{align}
		\alpha_i\alpha_k&=\alpha_k;
		\label{comp reg: prod alphas}\\
		\beta_i^j\beta_k^m&=\beta_k^m;
		\label{comp reg: prod betas}\\
		\alpha_i\beta_k^m&=\beta_k^m;
		\label{comp reg: prod alpha beta}\\
		\beta_k^m\alpha_i&=\begin{cases}
			\beta_k^1 &\text{if } l=0,\\
			\beta_k^l &\text{if } l\in\X{n-1},
		\end{cases}
		\label{comp reg: prod beta alpha}\\
		& \kern 25mm \text{where } l\in\set{0,\ldots,n-1} \text{ is such that } k=i+l \bmod{n}. \kern -25mm
		\nonumber
	\end{align}

	Let $i,k\in\set{0,\ldots,n-1}$ and $j,m\in\X{n-1}$ and let $x\in X$.
	
	Since $\im\beta_k^m=\set{x_k^m}$, then we have $x\beta_i^j\beta_k^m=x_k^m=x\beta_k^m$ and $x\alpha_i\beta_k^m=x_k^m=\beta_k^m$. Hence $\beta_i^j\beta_k^m=\beta_k^m$ and $\alpha_i\beta_k^m=\beta_k^m$, which proves \eqref{comp reg: prod betas} and \eqref{comp reg: prod alpha beta}. 
	
	Now we are going to prove \eqref{comp reg: prod alphas}. Let $t\in\set{0,\ldots,n-1}$ be such that $x\in X_t$. We have that $x\alpha_i=x_t^s\in X_t$ for some $s\in\X{n-1}$. Let $l\in\set{0,\ldots,n-1}$ be such that $t=k+l \bmod{n}$. Then  
	\begin{align*}
		x\alpha_i\alpha_k&=x_t^s\alpha_k &\bracks{\text{since } x\alpha_i=x_t^s}\\
		&=x_{k+l \bmod{n}}^s\alpha_k &\bracks{\text{since } t=k+l \bmod{n}}\\
		&=\begin{cases}
			x_k^1 &\text{if } l=0,\\
			x_{k+l \bmod{n}}^l &\text{if } l\in\X{n-1}
		\end{cases}\\
		&=x\alpha_k. &\bracks{\text{since } x\in X_t \text{ and } t=k+l \bmod{n}}
	\end{align*}
	Thus $\alpha_i\alpha_k=\alpha_k$.
	
	Finally, we are going to prove \eqref{comp reg: prod beta alpha}. Let $l\in\set{0,\ldots,n-1}$ be such that $k=i+l \bmod{n}$. We have that
	\begin{align*}
		x\beta_k^m\alpha_i &=x_k^m\alpha_i &\bracks{\text{since} \im\beta_k^m=\set{x_k^m}}\\
		&=x_{i+l \bmod{n}}^m\alpha_i &\bracks{\text{since } k=i+l \bmod{n}}\\
		&=\begin{cases}
			x_i^1 &\text{if } l=0,\\
			x_{i+l \bmod{n}}^l &\text{if } l\in\X{n-1}
		\end{cases}\\
		&=\begin{cases}
			x\beta_i^1 &\text{if } l=0,\\
			x\beta_{i+l \bmod{n}}^l &\text{if } l\in\X{n-1}
		\end{cases} &\smash{\begin{minipage}[c]{50mm}\raggedleft [since $\im\beta_t^s=\set{x_t^s}$, $t=0,\ldots,n-1$, $s=0,\ldots,n-1$]\end{minipage}}\\	
		&=\begin{cases}
			x\beta_k^1 &\text{if } l=0,\\
			x\beta_k^l &\text{if } l\in\X{n-1}
		\end{cases} &\bracks{\text{since } k=i+l \bmod{n}}
	\end{align*}
	which proves \eqref{comp reg: prod beta alpha}.
	
	It is clear that \eqref{comp reg: prod alphas}--\eqref{comp reg: prod beta alpha} allow us to conclude that $S$ is closed under composition; that is, $S$ is a subemigroup of $\tr{X}$. Furthermore, it follows from \eqref{comp reg: prod alphas} that $\alpha_i\alpha_i=\alpha_i$ for all $i\in\set{0,\ldots,n-1}$, and it follows from \eqref{comp reg: prod betas} that $\beta_i^j\beta_i^j=\beta_i^j$ for all $i\in\set{0,\ldots,n-1}$ and $j\in\X{n-1}$. Therefore $S$ is a semigroup of idempotents. This implies that the singletons $\set{\alpha_i}$, where $i\in\set{0,\ldots,n-1}$, and $\set{\beta_i^j}$, where $i\in\set{0,\ldots,n-1}$ and $j\in\X{n-1}$, are subgroups of $S$. Thus, by Theorem~\ref{preli: comp reg, every element in subgroup}, we have that $S$ is a completely regular semigroup.
	
	Our next objective is to determine which transformations of $S$ commute. More precisely, we will demonstrate that for all $i,k\in\set{0,\ldots,n-1}$ and $j,m\in\X{n-1}$ we have
	\begin{align*}
		\alpha_i\alpha_k=\alpha_k\alpha_i &\iff i=k,\\
		\beta_i^j\beta_k^m=\beta_k^m\beta_i^j &\iff i=k \text{ and } j=m,\\
		\alpha_i\beta_k^1=\beta_k^1\alpha_i &\iff k=i \text{ or } k=i+1\bmod{n},\\
		\alpha_i\beta_k^m=\beta_k^m\alpha_i &\iff k=i+m\bmod{n}.
	\end{align*}
	
	Let $i,k\in\set{0,\ldots,n-1}$, $j\in\X{n-1}$ and $m\in\set{2,\ldots,n-1}$. Let $l\in\set{0,\ldots,n-1}$ be such that $k=i+l\bmod{n}$.
	
	By \eqref{comp reg: prod alphas} we have
	\begin{displaymath}
		\alpha_i\alpha_k=\alpha_k\alpha_i\iff\alpha_k=\alpha_i\iff k=i.
	\end{displaymath}
	
	By \eqref{comp reg: prod betas} we have
	\begin{displaymath}
		\beta_i^j\beta_k^m=\beta_k^m\beta_i^j \iff \beta_k^m=\beta_i^j \iff k=i \text{ and } m=j.
	\end{displaymath}
	
	By \eqref{comp reg: prod alpha beta} and \eqref{comp reg: prod beta alpha} we have
	\begin{align*}
		\alpha_i\beta_k^1=\beta_k^1\alpha_i &\iff \beta_k^1=\beta_k^1\alpha_i \\
		&\iff l=0 \text{ or } \parens{l\geqslant 1 \text{ and } \beta_k^1=\beta_k^l}\\
		&\iff l=0 \text{ or } l=1 \\
		&\iff k=i \text{ or } k=i+1\bmod{n}.\\
		\shortintertext{and}
		\alpha_i\beta_k^m=\beta_k^m\alpha_i &\iff \beta_k^m=\beta_k^m\alpha_i \\
		&\iff l\geqslant 1 \text{ and } \beta_k^m=\beta_k^l &\bracks{\text{since } m>1}\\
		&\iff m=l &\bracks{\text{since } m>1}\\
		&\iff k=i+m\bmod{n}. &\bracks{\text{since } k=i+l\bmod{n}}
	\end{align*}
	
		%
		%
		%
		%
	
	With the previous four equivalences we can construct $\commgraph{S}$, which is illustrated in Figure~\ref{comp reg: figure, cycle length 2n}. Moreover, we see that the unique cycle of $\commgraph{S}$ is
	\begin{displaymath}
		\alpha_0-\beta_1^1-\alpha_1-\beta_2^1-\alpha_2-\cdots-\alpha_{i-1}-\beta_i^1-\alpha_i-\beta_{i+1}^1-\cdots-\beta_{n-1}^1-\alpha_{n-1}-\beta_0^1-\alpha_0.
	\end{displaymath}
	Therefore $\girth{\commgraph{S}}=2n$, which concludes the proof.
	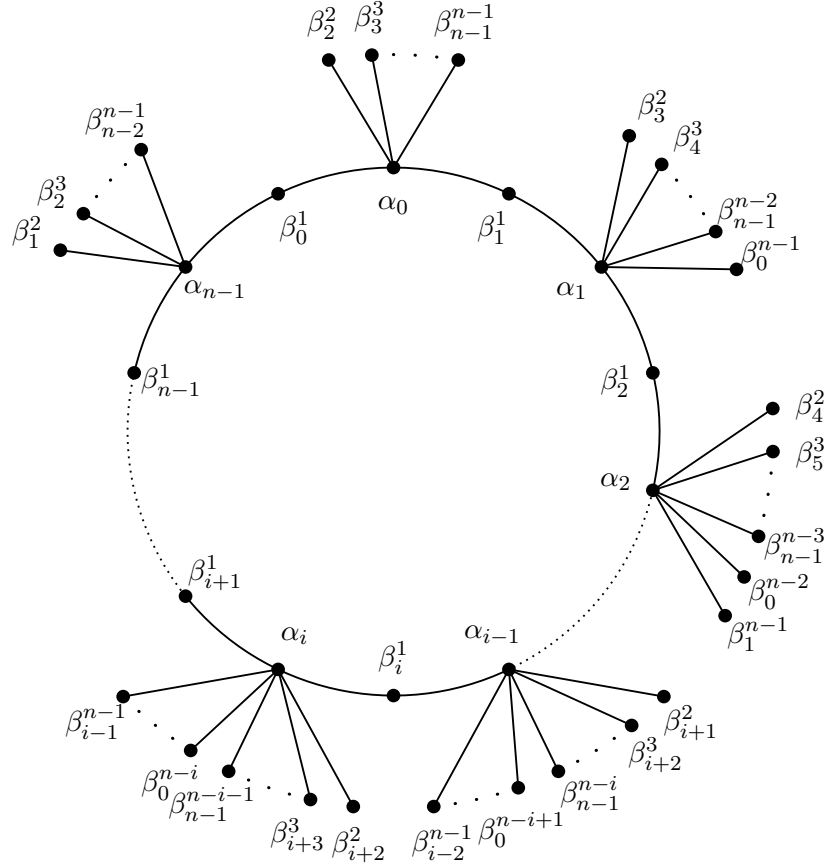
\begin{figure}[h!bt]
		\centering
		\begin{tikzpicture}
			
			\foreach \x in {-4,-3,-2,-1,0,1,2,3,5,6,7,8} {
				\node[vertex] at (90+\x*360/14:3.5cm) {};
			}
			
			\draw (90+2*360/14:3cm) node {$\alpha_{n-1}$};
			\draw (90:3cm) node {$\alpha_0$};
			\draw (90-2*360/14:3cm) node {$\alpha_1$};
			\draw (90-4*360/14:3cm) node {$\alpha_2$};
			\draw (90-6*360/14:3cm) node {$\alpha_{i-1}$};
			\draw (90-8*360/14:3cm) node {$\alpha_i$};
			
			\draw (90+3*360/14:3cm) node {$\beta_{n-1}^1$};
			\draw (90+360/14:3cm) node {$\beta_0^1$};
			\draw (90-1*360/14:3cm) node {$\beta_1^1$};
			\draw (90-3*360/14:3cm) node {$\beta_2^1$};
			\draw (90-7*360/14:3cm) node {$\beta_i^1$};
			\draw (90-9*360/14:3cm) node {$\beta_{i+1}^1$};
			
			\draw[edge] (90-4*360/14:3.5cm) arc (90-4*360/14:90+3*360/14:3.5cm);
			\draw[edge] (90-6*360/14:3.5cm) arc (90-6*360/14:90-9*360/14:3.5cm);
			\draw[edge, dotted] (90-4*360/14:3.5cm) arc (90-4*360/14:90-6*360/14:3.5cm);
			\draw[edge, dotted] (90-9*360/14:3.5cm) arc (90-9*360/14:90-11*360/14:3.5cm);
			
			
			
			\foreach \x in {-1.5,0.5,1.5} {
				\node[vertex] at (90+\x*360/55:5cm) {};
			}
			
			\draw (90+1.5*360/55:5.5cm) node {$\beta_2^2$};
			\draw (90+0.5*360/55:5.5cm) node {$\beta_3^3$};
			\draw (90-1.5*360/55:5.5cm) node {$\beta_{n-1}^{n-1}$};
			
			\draw[edge] (90:3.5cm) -- (90+1.5*360/55:5cm);
			\draw[edge] (90:3.5cm) -- (90+0.5*360/55:5cm);
			\draw[edge] (90:3.5cm) -- (90-1.5*360/55:5cm);
			
			\draw[dash pattern=on 0pt off 8pt,line cap=round, very thick] (90+0.5*360/55:5cm) arc (90+0.5*360/55:90-1.5*360/55:5cm);
			
			
			
			\foreach \x in {-2,-1,1,2} {
				\node[vertex] at (90-2*360/14+\x*360/55:5cm) {};
			}
			
			\draw (90-2*360/14+2*360/55:5.5cm) node {$\beta_3^2$};
			\draw (90-2*360/14+1*360/55:5.5cm) node {$\beta_4^3$};
			\draw (90-2*360/14-1*360/55:5.5cm) node {$\beta_{n-1}^{n-2}$};
			\draw (90-2*360/14-2*360/55:5.5cm) node {$\beta_0^{n-1}$};
			
			\draw[edge] (90-2*360/14:3.5cm) -- (90-2*360/14+2*360/55:5cm);
			\draw[edge] (90-2*360/14:3.5cm) -- (90-2*360/14+1*360/55:5cm);
			\draw[edge] (90-2*360/14:3.5cm) -- (90-2*360/14-1*360/55:5cm);
			\draw[edge] (90-2*360/14:3.5cm) -- (90-2*360/14-2*360/55:5cm);
			
			\draw[dash pattern=on 0pt off 8pt,line cap=round, very thick] (90-2*360/14+1*360/55:5cm) arc (90-2*360/14+1*360/55:90-2*360/14-1*360/55:5cm);
			
			
			
			\foreach \x in {-2.5,-1.5,-0.5,1.5,2.5} {
				\node[vertex] at (90-4*360/14+\x*360/55:5cm) {};
			}
			
			\draw (90-4*360/14+2.5*360/55:5.5cm) node {$\beta_4^2$};
			\draw (90-4*360/14+1.5*360/55:5.5cm) node {$\beta_5^3$};
			\draw (90-4*360/14-0.5*360/55:5.5cm) node {$\beta_{n-1}^{n-3}$};
			\draw (90-4*360/14-1.5*360/55:5.5cm) node {$\beta_0^{n-2}$};
			\draw (90-4*360/14-2.5*360/55:5.5cm) node {$\beta_1^{n-1}$};
			
			\draw[edge] (90-4*360/14:3.5cm) -- (90-4*360/14+2.5*360/55:5cm);
			\draw[edge] (90-4*360/14:3.5cm) -- (90-4*360/14+1.5*360/55:5cm);
			\draw[edge] (90-4*360/14:3.5cm) -- (90-4*360/14-0.5*360/55:5cm);
			\draw[edge] (90-4*360/14:3.5cm) -- (90-4*360/14-1.5*360/55:5cm);
			\draw[edge] (90-4*360/14:3.5cm) -- (90-4*360/14-2.5*360/55:5cm);
			
			\draw[dash pattern=on 0pt off 8pt,line cap=round, very thick] (90-4*360/14+1.5*360/55:5cm) arc (90-4*360/14+1.5*360/55:90-4*360/14-0.5*360/55:5cm);
			
			
			
			\foreach \x in {-3,-1,0,2,3} {
				\node[vertex] at (90-6*360/14+\x*360/55:5cm) {};
			}
			
			\draw (90-6*360/14+3*360/55:5.5cm) node {$\beta_{i+1}^2$};
			\draw (90-6*360/14+2*360/55:5.5cm) node {$\beta_{i+2}^3$};
			\draw (90-6*360/14+0.4*360/55:5.46cm) node {$\beta_{n-1}^{n-i}$};
			\draw (90-6*360/14-1.22*360/55:5.54cm) node {$\beta_0^{n-i+1}$};
			\draw (90-6*360/14-2.9*360/55:5.5cm) node {$\beta_{i-2}^{n-1}$};
			
			\draw[edge] (90-6*360/14:3.5cm) -- (90-6*360/14+3*360/55:5cm);
			\draw[edge] (90-6*360/14:3.5cm) -- (90-6*360/14+2*360/55:5cm);
			\draw[edge] (90-6*360/14:3.5cm) -- (90-6*360/14:5cm);
			\draw[edge] (90-6*360/14:3.5cm) -- (90-6*360/14-1*360/55:5cm);
			\draw[edge] (90-6*360/14:3.5cm) -- (90-6*360/14-3*360/55:5cm);
			
			\draw[dash pattern=on 0pt off 8pt,line cap=round, very thick] (90-6*360/14+2*360/55:5cm) arc (90-6*360/14+2*360/55:90-6*360/14:5cm);
			\draw[dash pattern=on 0pt off 8pt,line cap=round, very thick] (90-6*360/14-3*360/55:5cm) arc (90-6*360/14-3*360/55:90-6*360/14-1*360/55:5cm);

			
			
			\foreach \x in {-3,-1,0,2,3} {
				\node[vertex] at (90-8*360/14+\x*360/55:5cm) {};
			}
			
			\draw (90-8*360/14+3.2*360/55:5.5cm) node {$\beta_{i+2}^2$};
			\draw (90-8*360/14+1.9*360/55:5.5cm) node {$\beta_{i+3}^3$};
			\draw (90-8*360/14:5.5cm) node {$\beta_{n-1}^{n-i-1}$};
			\draw (90-8*360/14-1*360/55:5.5cm) node {$\beta_0^{n-i}$};
			\draw (90-8*360/14-3*360/55:5.5cm) node {$\beta_{i-1}^{n-1}$};
			
			\draw[edge] (90-8*360/14:3.5cm) -- (90-8*360/14+3*360/55:5cm);
			\draw[edge] (90-8*360/14:3.5cm) -- (90-8*360/14+2*360/55:5cm);
			\draw[edge] (90-8*360/14:3.5cm) -- (90-8*360/14:5cm);
			\draw[edge] (90-8*360/14:3.5cm) -- (90-8*360/14-1*360/55:5cm);
			\draw[edge] (90-8*360/14:3.5cm) -- (90-8*360/14-3*360/55:5cm);
			
			\draw[dash pattern=on 0pt off 8pt,line cap=round, very thick] (90-8*360/14+2*360/55:5cm) arc (90-8*360/14+2*360/55:90-8*360/14:5cm);
			\draw[dash pattern=on 0pt off 8pt,line cap=round, very thick] (90-8*360/14-3*360/55:5cm) arc (90-8*360/14-3*360/55:90-8*360/14-1*360/55:5cm);
			
			
			
			\foreach \x in {-1.5,0.5,1.5} {
				\node[vertex] at (90+2*360/14+\x*360/55:5cm) {};
			}
			
			\draw (90+2*360/14+1.5*360/55:5.5cm) node {$\beta_1^2$};
			\draw (90+2*360/14+0.5*360/55:5.5cm) node {$\beta_2^3$};
			\draw (90+2*360/14-1.5*360/55:5.5cm) node {$\beta_{n-2}^{n-1}$};
			
			\draw[edge] (90+2*360/14:3.5cm) -- (90+2*360/14+1.5*360/55:5cm);
			\draw[edge] (90+2*360/14:3.5cm) -- (90+2*360/14+0.5*360/55:5cm);
			\draw[edge] (90+2*360/14:3.5cm) -- (90+2*360/14-1.5*360/55:5cm);
			
			\draw[dash pattern=on 0pt off 8pt,line cap=round, very thick] (90+2*360/14+0.5*360/55:5cm) arc (90+2*360/14+0.5*360/55:90+2*360/14-1.5*360/55:5cm);
			
		\end{tikzpicture}
		\caption{Commuting graph of a completely regular semigroup whose girth equal to $2n$, where $n\geqslant 3$.}
		\label{comp reg: figure, cycle length 2n}
	\end{figure}
\end{proof}

\section{Possible values for the clique number of commuting graphs}\label{sec: clique number inv smg}

The purpose of this section is to investigate the possible values for the clique numbers of commuting graphs of groups, Clifford semigroups, inverse semigroups and completely regular semigroups. In \cite[Corollary 4.1]{Completely_simple_semigroups_paper} the present author showed that the set of possible values for the clique number of the commuting graph of a completely simple semigroup is $\mathbb{N}$. Since completely simple semigroups are completely regular, then $\mathbb{N}$ is also the set of possible values for the clique number of the commuting graph of a completely regular semigroup. This means we only need to consider the classes of groups, Clifford semigroups and inverse semigroups.

We require two results from \cite{Semigroup_constructions_paper}: the first one concerns the clique number of the commuting graph of a zero-union of semigroups, and the second one concerns the clique number of the commuting graph of a direct product of semigroups.

Let $n\in\mathbb{N}\setminus\set{1}$. We recall that the \defterm{zero-union} of $n$ disjoint semigroups $S_1,\ldots,S_n$ is the set $\set{0}\cup\bigcup_{i=1}^n S_i$, where $0$ is a new element, and where the product of any two elements $x$ and $y$ is equal to the element $xy\in S_i$, if $x,y\in S_i$ for some $i\in\Xn$; and $0$ for the remaining cases. The \defterm{direct product} of $n$ semigroups $S_1,\ldots,S_n$ is the set $\prod_{i=1}^n S_i$ with componentwise multiplication: $\parens{i}\parens{st}=\parens{i}s\parens{i}t$ for all $i\in\Xn$ and $s,t\in \prod_{i=1}^n S_i$.

\begin{corollary}\cite[Theorem 3.2 and Corollary 3.4]{Semigroup_constructions_paper}\label{0Union: clique number}
	Let $n\in\mathbb{N}\setminus\set{1}$ and let $S$ be the zero-union of the finite semigroups $S_1,\ldots,S_n$. Let $\NC=\{i\in\Xn: S_i \text{ is not}\allowbreak \text{commutative}\}$ and suppose that $\NC\neq\emptyset$. We have that $\cliquenumber{\commgraph{S}}=\sum_{i\in\NC}\cliquenumber{\commgraph{S_i}}$.
\end{corollary}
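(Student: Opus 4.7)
The plan is to show that $\commgraph{S}$ is the graph join of the family $\set{\commgraph{S_i} : i\in\NC}$, after which the stated equality drops out from the standard behaviour of the clique number under graph joins.

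First I would pin down $\centre{S}$. The zero element is trivially central. For $x\in S_i$ and $y\in S_j$ with $j\neq i$, one has $xy=0=yx$, so commutativity with such $y$ is automatic; hence $x\in S_i$ is central in $S$ if and only if $x$ commutes with every element of $S_i$, that is, if and only if $x\in\centre{S_i}$. In particular $S_i\subseteq\centre{S}$ whenever $i\notin\NC$, while $S_i\cap\centre{S}=\centre{S_i}$ whenever $i\in\NC$. Consequently the vertex set of $\commgraph{S}$ is $\bigcup_{i\in\NC}\parens{S_i\setminus\centre{S_i}}$, which is precisely the disjoint union of the vertex sets of the graphs $\commgraph{S_i}$ for $i\in\NC$.

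Next I would read off the edges. Take distinct vertices $x,y$ of $\commgraph{S}$ with $x\in S_i\setminus\centre{S_i}$ and $y\in S_j\setminus\centre{S_j}$, both $i,j\in\NC$. If $i=j$, they are adjacent in $\commgraph{S}$ exactly when $xy=yx$ in $S_i$, which is the adjacency condition in $\commgraph{S_i}$; if $i\neq j$, then $xy=0=yx$, so they are always adjacent. This says exactly that $\commgraph{S}$ is the graph join of $\set{\commgraph{S_i} : i\in\NC}$.

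Finally I would invoke the elementary fact that a subset $K$ of the vertex set of a graph join decomposes as a disjoint union $K=K_1\cup\cdots\cup K_k$ of its restrictions to each factor, and is a clique if and only if every $K_i$ is a clique of the corresponding factor; maximizing the $\abs{K_i}$ independently then yields $\cliquenumber{\commgraph{S}}=\sum_{i\in\NC}\cliquenumber{\commgraph{S_i}}$. The only substantive step is the centre computation; once the join structure is identified, the rest is a definition-chase. One harmless sanity check worth recording is that, since each $S_i$ with $i\in\NC$ is non-commutative, every $\commgraph{S_i}$ appearing in the sum is a non-empty graph, so each of the terms $\cliquenumber{\commgraph{S_i}}$ is a well-defined positive integer.
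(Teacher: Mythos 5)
Your argument is correct. Note that the paper does not prove this corollary itself --- it is imported verbatim from the cited reference \cite[Theorem 3.2 and Corollary 3.4]{Semigroup_constructions_paper} --- so there is no in-paper proof to compare against; your derivation (computing $\centre{S}$, identifying $\commgraph{S}$ as the join of the graphs $\commgraph{S_i}$ for $i\in\NC$, and using the additivity of the clique number over a join) is a sound, self-contained justification of the quoted result.
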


\begin{theorem}\cite[Theorem 4.6]{Semigroup_constructions_paper}\label{direct prod: clique number}
	Let $n\in\mathbb{N}\setminus\set{1}$ and let $S$ be the direct product of the finite semigroups $S_1,\ldots,S_n$. Let $C=\gset{i\in\Xn}{S_i \text{ is commutative}}$ and let $\NC=\gset{i\in\Xn}{S_i \text{ is not commutative}}$ and suppose that $\NC\neq\emptyset$. We have that
	\begin{displaymath}
		\cliquenumber{\commgraph{S}}=\parens[\bigg]{\,\prod_{i\in C}\abs{S_i}}\parens[\bigg]{\,\prod_{i\in \NC}{\parens[\big]{\abs{\centre{S_i}}+\cliquenumber{\commgraph{S_i}}}}} -\prod_{i=1}^n{\abs{\centre{S_i}}}.
	\end{displaymath}
\end{theorem}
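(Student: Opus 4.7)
I would establish the formula by matching upper and lower bounds on $\cliquenumber{\commgraph{S}}$, analysing commuting subsets of $S$ coordinatewise.

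First I would record the two structural facts about direct products that the whole argument rests on: $\centre{S}=\prod_{i=1}^n\centre{S_i}$, so $\abs{\centre{S}}=\prod_{i=1}^n\abs{\centre{S_i}}$; and two tuples $s,t\in S$ commute if and only if $s_it_i=t_is_i$ for every $i\in\Xn$. Consequently, writing $\pi_i\colon S\to S_i$ for the $i$-th projection, a subset $T\subseteq S$ consists of pairwise commuting elements if and only if $T\subseteq\prod_{i=1}^n\pi_i(T)$ and each $\pi_i(T)\subseteq S_i$ consists of pairwise commuting elements. I would also isolate the single-factor observation that a maximum commuting subset of a non-commutative $S_i$ has size exactly $\abs{\centre{S_i}}+\cliquenumber{\commgraph{S_i}}$: take all central elements (which commute with everything) together with a maximum clique of $\commgraph{S_i}$, and conversely any commuting subset of $S_i$ splits as its central elements adjoined to a set of pairwise-commuting non-central elements, i.e.\ a clique of $\commgraph{S_i}$.

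For the upper bound, let $K$ be a clique of $\commgraph{S}$ and put $K^*=K\cup\centre{S}$. Since central elements commute with everything, $K^*$ is a commuting subset of $S$; since $K\cap\centre{S}=\emptyset$, the union is disjoint and $\abs{K^*}=\abs{K}+\abs{\centre{S}}$. Each projection $\pi_i(K^*)$ is then a commuting subset of $S_i$, so $\abs{\pi_i(K^*)}\leqslant\abs{S_i}$ when $i\in C$, and $\abs{\pi_i(K^*)}\leqslant\abs{\centre{S_i}}+\cliquenumber{\commgraph{S_i}}$ when $i\in\NC$, by the single-factor observation. Applying $\abs{K^*}\leqslant\prod_i\abs{\pi_i(K^*)}$ and subtracting $\abs{\centre{S}}$ yields the claimed upper bound on $\abs{K}$.

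For the lower bound I would exhibit a clique achieving the formula. For each $i\in C$ set $T_i=S_i$; for each $i\in\NC$ pick a maximum clique $K_i$ of $\commgraph{S_i}$ and set $T_i=\centre{S_i}\cup K_i$, a commuting subset of $S_i$ of cardinality $\abs{\centre{S_i}}+\cliquenumber{\commgraph{S_i}}$. Then $T=\prod_{i=1}^nT_i$ is a commuting subset of $S$ which, because $\centre{S_i}\subseteq T_i$ for every $i$, contains $\centre{S}$; hence $T\setminus\centre{S}$ is a clique of $\commgraph{S}$ of size $\prod_i\abs{T_i}-\abs{\centre{S}}$, matching the formula. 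The only delicate step is the single-factor lemma underpinning the upper bound; everything else reduces to routine bookkeeping with the coordinatewise multiplication.
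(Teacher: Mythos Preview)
The paper does not actually prove this statement: it is quoted verbatim as \cite[Theorem 4.6]{Semigroup_constructions_paper} and used as a black box in the proofs of Theorems~\ref{inv smg: clique number groups} and~\ref{inv smg: chromatic number groups}. There is therefore no in-paper proof to compare against.

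That said, your proposal is correct and is the natural argument. The two structural facts you isolate --- $\centre{S}=\prod_i\centre{S_i}$ and coordinatewise commutation --- are exactly what one needs, and your ``single-factor lemma'' (a maximum commuting subset of a finite non-commutative $S_i$ has size $\abs{\centre{S_i}}+\cliquenumber{\commgraph{S_i}}$) is both true and the crux of the upper bound. The sandwich $\abs{K}+\abs{\centre{S}}=\abs{K^*}\leqslant\prod_i\abs{\pi_i(K^*)}$ followed by the coordinatewise estimates gives the upper bound cleanly, and the explicit product set $T=\prod_iT_i$ realises equality. One cosmetic remark: in your sentence ``$T\subseteq\prod_i\pi_i(T)$ and each $\pi_i(T)$ consists of pairwise commuting elements'', the first clause is automatic and can be dropped; the equivalence you need is simply that $T$ pairwise commutes in $S$ iff each $\pi_i(T)$ pairwise commutes in $S_i$.
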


Now we are ready to verify that all even numbers are possible values for the clique number of the commuting graph of a group.

\begin{theorem}\label{inv smg: clique number groups}
	For each $n\in\mathbb{N}$, there is a (finite non-abelian) group whose commuting graph has clique number equal to $2n$.
\end{theorem}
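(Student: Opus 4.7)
The plan is to use the direct product construction together with the formula from Theorem~\ref{direct prod: clique number}, taking as the non-commutative factor the smallest non-abelian group, namely the symmetric group $\Sym{3}$, and as the commutative factor a cyclic group of suitable size.

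First I would handle the tiny cases by computing $\cliquenumber{\commgraph{\Sym{3}}}$ directly. Since $\Sym{3}$ has trivial center, its commuting graph has vertex set $\Sym{3}\setminus\set{\id{}}$. The three transpositions pairwise fail to commute, while the two $3$-cycles $(1\,2\,3)$ and $(1\,3\,2)$ do commute and together with any transposition they fail to, so the maximum clique is $\set{(1\,2\,3),(1\,3\,2)}$. Hence $\cliquenumber{\commgraph{\Sym{3}}}=2$, settling the case $n=1$.

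For $n\geqslant 2$, I would take $G=\Sym{3}\times C_n$, where $C_n$ denotes the cyclic group of order $n$. This is a finite non-abelian group. Applying Theorem~\ref{direct prod: clique number} with $S_1=\Sym{3}$ (non-commutative) and $S_2=C_n$ (commutative) gives $C=\set{2}$ and $\NC=\set{1}$; using $\abs{\centre{\Sym{3}}}=1$, $\cliquenumber{\commgraph{\Sym{3}}}=2$, and $\abs{\centre{C_n}}=\abs{C_n}=n$, the formula yields
\begin{displaymath}
    \cliquenumber{\commgraph{G}}
    =\abs{C_n}\bigl(\abs{\centre{\Sym{3}}}+\cliquenumber{\commgraph{\Sym{3}}}\bigr)-\abs{\centre{\Sym{3}}}\cdot\abs{\centre{C_n}}
    =n(1+2)-1\cdot n=2n,
\end{displaymath}
as required.

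There is no real obstacle here: the only thing that needs a careful (but short) argument is the identification $\cliquenumber{\commgraph{\Sym{3}}}=2$, and after that the direct product formula does all the work. If the author prefers to avoid invoking Theorem~\ref{direct prod: clique number} for the $n=1$ case, one can uniformly present the construction as $\Sym{3}\times C_n$ for $n\geqslant 2$ together with the standalone verification for $\Sym{3}$ when $n=1$.
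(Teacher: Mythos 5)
Your proposal is correct and is essentially identical to the paper's proof, which also takes $\Sym{3}\times C_n$ and applies Theorem~\ref{direct prod: clique number} with $\abs{\centre{\Sym{3}}}=1$ and $\cliquenumber{\commgraph{\Sym{3}}}=2$ to get $3n-n=2n$. Your separate treatment of $n=1$ is harmless but unnecessary: the hypothesis $n\geqslant 2$ in Theorem~\ref{direct prod: clique number} refers to the number of factors in the direct product (here two), not to the order of $C_n$, so the formula applies uniformly for all $n\in\mathbb{N}$.
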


\begin{proof}
	Let $n\in\mathbb{N}$. We consider two groups: $\Sym{3}$ --- the symmetric group over $\set{1,2,3}$ --- and $C_n$ --- the cyclic group of order $n$. Then $\Sym{3}\times C_n$ is also a group. Furthermore, since $\Sym{3}$ is not commutative, then $\Sym{3}\times C_n$ is not commutative (see \cite[Proposition 4.2]{Semigroup_constructions_paper}). We have $\centre{C_n}=C_n$ (because $C_n$ is abelian) and it is easy to see that the only central element of $\Sym{3}$ is the identity. Moreover, by examination of Figure~\ref{inv smg, Figure: commuting graph of S_3} we can see that $\cliquenumber{\commgraph{\Sym{3}}}=2$. Then, by Theorem~\ref{direct prod: clique number}, we have
	\begin{align*}
		\cliquenumber{\commgraph{\Sym{3}\times C_n}}& =\parens[\big]{\abs{\centre{\Sym{3}}}+\cliquenumber{\commgraph{\Sym{3}}}}\cdot\abs{C_n}-\parens[\big]{\abs{\centre{\Sym{3}}}\cdot\abs{\centre{C_n}}}\\
		& =3n - n\\
		& =2n.\qedhere
	\end{align*}
\end{proof}

\begin{figure}[hbt]
	\begin{center}
		\begin{tikzpicture}
			
			\node[vertex] (231) at (0,0) {};
			\node[vertex] (312) at (0,-1) {};
			\node[vertex] (132) at (2,-0.5) {};
			\node[vertex] (213) at (4,-0.5) {};
			\node[vertex] (321) at (6,-0.5) {};

			\node[anchor=south east] at (231) {$\begin{pmatrix}1&2&3\\ 2&3&1\end{pmatrix}$};
			\node[anchor=north east] at (312) {$\begin{pmatrix}1&2&3 \\ 3&1&2\end{pmatrix}$};
			\node[anchor=south] at (132) {$\begin{pmatrix}1&2&3 \\ 1&3&2\end{pmatrix}$};
			\node[anchor=south] at (213) {$\begin{pmatrix}1&2&3 \\ 2&1&3\end{pmatrix}$};
			\node[anchor=south] at (321) {$\begin{pmatrix}1&2&3 \\ 3&2&1\end{pmatrix}$};
			
			\draw[edge] (231)--(312);
			
		\end{tikzpicture}
	\end{center}
	\caption{Commuting graph of the symmetric group $\Sym{3}$ on $\set{1,2,3}$.}
	\label{inv smg, Figure: commuting graph of S_3}
\end{figure}

We note that it is not true that the clique number of the commuting graph of a group is always an even number. In fact, there are several groups whose commuting graph has clique number equal to an odd number --- for instance, it follows from {\cite[Theorem 1]{Symmetric_group}} and \cite[Theorem 1.1]{Alternating_group} that for all $n\in\mathbb{N}$, the clique numbers of the commuting graphs of the symmetric and alternating groups $\Sym{3n+1}$ and $\Alt{3n+1}$ on $\X{3n+1}$ is $4\cdot3^{k-1}-1$, which is odd.

Now we establish that each positive integer greater than $1$ is a possible value for the clique number of the commuting graph of a Clifford semigroup. Since Clifford semigroups are simultaneously inverse and completely regular, then the result also holds for these semigroups. In order to prove the desired result, we need the following lemma.

\begin{proposition}\label{inv smg: zero-union Clifford smg is Clifford smg}
	Let $n\in\mathbb{N}$ and let $S_1,\ldots,S_n$ be Clifford semigroups. Then the zero-union of $S_1,\ldots,S_n$ is also a Clifford semigroup.
\end{proposition}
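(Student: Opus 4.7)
The plan is to invoke the characterization of Clifford semigroups given in Theorem~\ref{preli: characterization Clifford smg}: a semigroup is Clifford if and only if it is regular and all its idempotents are central. So rather than verifying the four axioms \eqref{preli: Clifford smg, inverse of an inverse}--\eqref{preli: Clifford smg, idemp commute} directly for the zero-union, I would just check those two properties for $S=\set{0}\cup\bigcup_{i=1}^{n}S_i$.

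First I would recall that every $S_i$ is regular (as a Clifford semigroup) and that, by Theorem~\ref{preli: characterization Clifford smg} applied to each $S_i$, every idempotent of $S_i$ is central in $S_i$. I would then identify the idempotents of the zero-union: these are exactly $0$ together with the idempotents of each $S_i$, because products across different components collapse to $0$ and $0 \neq e \in S_i$ forces $e^2 = e$ to be computed inside $S_i$.

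Next I would verify regularity of $S$. The element $0$ satisfies $0\cdot 0\cdot 0 = 0$, so it is regular. For $x\in S_i$, regularity of $S_i$ provides $y\in S_i$ with $xyx=x$, and this equation continues to hold in $S$ because the product is computed componentwise inside $S_i$. Then I would verify centrality of idempotents. For $0$ this is immediate since $0x=x0=0$ for every $x\in S$. For an idempotent $e\in S_i$ and any $y\in S$, one splits into two cases: if $y\in S_i$ then $ey=ye$ because $e$ is central in $S_i$; if $y\in S_j$ for some $j\neq i$ or $y=0$, then both $ey$ and $ye$ equal $0$ by the definition of the zero-union. In either case $ey=ye$, so $e\in\centre{S}$.

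Having established that $S$ is regular and that $\idemp{S}\subseteq\centre{S}$, Theorem~\ref{preli: characterization Clifford smg} lets me conclude that $S$ is a Clifford semigroup. There is no real obstacle here; the only thing to be slightly careful about is that the multiplication in the zero-union genuinely agrees with the multiplication in each $S_i$ when all factors belong to the same component, which is exactly what allows regularity and componentwise centrality to lift from each $S_i$ to $S$.
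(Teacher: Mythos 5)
Your proposal is correct and follows essentially the same route as the paper's own proof: both identify $\idemp{S}=\set{0}\cup\bigcup_{i=1}^{n}\idemp{S_i}$, check that each idempotent is central by splitting into the cases $y$ in the same component versus $y$ in a different component or $y=0$, verify regularity componentwise together with $0\cdot 0\cdot 0=0$, and conclude via Theorem~\ref{preli: characterization Clifford smg}. No discrepancies to report.
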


\begin{proof}
	Let $S=\set{0}\cup\bigcup_{i=1}^n S_i$ be the zero-union of $S_1,\ldots,S_n$. Since $S_1,\ldots,S_n$ are Clifford semigroups, then they are regular and their idempotents are central.
	
	We have $\idemp{S}=\set{0}\cup\bigcup_{i=1}^n\idemp{S_i}$. We are going to see that $\idemp{S}\subseteq\centre{S}$. Let $x\in\idemp{S}$.
	
	\smallskip
	
	\textit{Case 1:} Assume that $x=0$. Then $xy=0y=0=y0=yx$ for all $y\in S$, which implies that $x\in\centre{S}$.
	
	\smallskip
	
	\textit{Case 2:} Assume that $x\in S_i$ for some $i\in\Xn$. Due to the fact that $x$ is an idempotent of $S_i$ and all idempotents of $S_i$ are central, then $xy=yx$ for all $y\in S_i$. Furthermore, $xy=0=yx$ for all $j\in\Xn\setminus\set{i}$ and $y\in S_j$. Since we also have $x0=0=0x$, then $x\in\centre{S}$.
	
	\smallskip
	
	We just proved that all idempotents of $S$ are central. Additionally, since $0$ is regular and $S_1,\ldots,S_n$ are regular, then $S$ is regular. Therefore $S$ is a Clifford semigroup.
\end{proof}

\begin{theorem}\label{inv smg: clique number Clifford}
	For each $n\in\mathbb{N}$ such that $n\geqslant 2$, there is a (finite non-commutative) Clifford semigroup whose commuting graph has clique number equal to $n$.
\end{theorem}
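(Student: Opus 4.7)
The plan is to realise each value $n \geq 2$ by a suitable zero-union: Proposition~\ref{inv smg: zero-union Clifford smg is Clifford smg} guarantees that such a zero-union remains Clifford, and Corollary~\ref{0Union: clique number} says the clique number of its commuting graph is the sum of the clique numbers contributed by the non-commutative summands. So it suffices to exhibit two non-commutative Clifford semigroups, one with clique number $2$ and one with clique number $3$, because every integer $n \geq 2$ can be written as $2a + 3b$ for non-negative integers $a, b$, not both zero.

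For clique number $2$ I would take $\Sym{3}$, a group --- and hence a Clifford semigroup. Inspection of Figure~\ref{inv smg, Figure: commuting graph of S_3} immediately gives $\cliquenumber{\commgraph{\Sym{3}}} = 2$. For clique number $3$ I would take $\Sym{4}$. Since $\centre{\Sym{4}}$ is trivial, a clique in $\commgraph{\Sym{4}}$ is a set of pairwise commuting non-identity permutations, which therefore spans an abelian subgroup of $\Sym{4}$ of size one larger than the clique. The maximal abelian subgroups of $\Sym{4}$ --- the cyclic subgroup generated by a $4$-cycle and the Klein four-group $\{e, (1\,2)(3\,4), (1\,3)(2\,4), (1\,4)(2\,3)\}$ --- both have order $4$, so the clique number is at most $3$, and the triple $\{(1\,2\,3\,4), (1\,3)(2\,4), (1\,4\,3\,2)\}$ achieves this bound.

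Finally, given $n \geq 2$, I would write $n = 2a + 3b$ with $a, b$ non-negative integers, not both zero (take $a = n/2$, $b = 0$ when $n$ is even, and $a = (n-3)/2$, $b = 1$ when $n$ is odd), and let $S$ be the zero-union of $a$ pairwise disjoint copies of $\Sym{3}$ together with $b$ pairwise disjoint copies of $\Sym{4}$, or simply the single group itself when $a + b = 1$. Each summand is a non-commutative Clifford semigroup, so $S$ is too, and Corollary~\ref{0Union: clique number} delivers $\cliquenumber{\commgraph{S}} = 2a + 3b = n$. The only calculation with any content is the clique-number determination for $\Sym{4}$, which is the short abelian-subgroup analysis above; everything else is a direct application of the preceding proposition and corollary.
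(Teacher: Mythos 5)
Your proposal is correct, and its engine is the same as the paper's for the hard (odd) case: the additivity of the clique number over zero-unions (Corollary~\ref{0Union: clique number}) combined with Proposition~\ref{inv smg: zero-union Clifford smg is Clifford smg}. The differences are in the ingredients. The paper handles even $n$ separately by staying inside the class of groups, taking the direct product $\Sym{3}\times C_{n/2}$ and invoking the direct-product formula (Theorem~\ref{direct prod: clique number}); this buys the slightly stronger fact, recorded as Theorem~\ref{inv smg: clique number groups}, that every even value is already attained by a \emph{group}. Your uniform treatment via $n=2a+3b$ and zero-unions of copies of $\Sym{3}$ and $\Sym{4}$ is cleaner and avoids the direct-product machinery entirely, at the cost of producing semigroups with a zero rather than groups in the even case --- which is harmless for the statement being proved. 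For the odd building block the paper uses $\Alt{4}$, reading $\cliquenumber{\commgraph{\Alt{4}}}=3$ off Figure~\ref{inv smg, Figure: commuting graph of A_4}, whereas you use $\Sym{4}$ with a self-contained argument; that argument is sound, since in a group with trivial centre a clique together with the identity generates an abelian subgroup of order at least one more than the clique, and conversely the non-identity elements of any abelian subgroup form a clique, so $\cliquenumber{\commgraph{\Sym{4}}}=\max\set{\abs{A}: A\leqslant\Sym{4} \text{ abelian}}-1=3$. One small imprecision: your list of maximal abelian subgroups of $\Sym{4}$ omits the subgroups of type $\angbracks{(1\,2),(3\,4)}$ (order $4$) and $\angbracks{(1\,2\,3)}$ (order $3$), but since all of these still have order at most $4$ the bound, and hence the proof, is unaffected.
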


\begin{proof}
	Assume that $n$ is even. Then, by Theorem~\ref{inv smg: clique number groups}, there exists a finite non-abelian group $G$ such that $\cliquenumber{\commgraph{G}}=n$. The result follows from the fact that groups are Clifford semigroups.
	
	Assume that $n=3$. The alternating group $\Alt{4}$ over $\set{1,2,3,4}$ is a Clifford semigroup (because it is a group). Moreover, it is easy to verify in Figure~\ref{inv smg, Figure: commuting graph of A_4} that $\cliquenumber{\commgraph{\Alt{4}}}=3=n$.
	
	Now assume that $n$ is odd and that $n\geqslant 5$. Let $k\in\mathbb{N}$ be such that $n=2k+1$. Then $k\geqslant 2$. Let $S$ be the zero-union of $\Alt{4}$ with $k-1$ copies of $\Sym{3}$. Since $\Alt{4}$ and $\Sym{3}$ are groups (and, consequently, Clifford semigroups), then Proposition~\ref{inv smg: zero-union Clifford smg is Clifford smg} guarantees that $S$ is a Clifford semigroup. Note that $\Alt{4}$ and $\Sym{3}$ are not commutative, which implies that $S$ is also not commutative (see \cite[Proposition 3.1]{Semigroup_constructions_paper}). Furthermore, by observation of Figures~\ref{inv smg, Figure: commuting graph of S_3} and \ref{inv smg, Figure: commuting graph of A_4} it is easy to verify that $\cliquenumber{\commgraph{\Sym{3}}}=2$ and $\cliquenumber{\commgraph{\Alt{4}}}=3$, respectively. Therefore, by Corollary~\ref{0Union: clique number},
	\begin{displaymath}
		\cliquenumber{\commgraph{S}}=\cliquenumber{\commgraph{\Alt{4}}}+\parens{k-1}\cdot\cliquenumber{\commgraph{\Sym{3}}}=3 + \parens{k-1}\cdot 2=2k+1=n. \qedhere
	\end{displaymath}
\end{proof}

\begin{figure}[hbt]
	\begin{center}
		\begin{tikzpicture}
			
			\node[vertex] (2143) at (6.5,-1.5) {};
			\node[vertex] (3412) at (6.5,-3) {};
			\node[vertex] (4321) at (8,-2.25) {};
			\node[vertex] (3124) at (0,0) {};
			\node[vertex] (2314) at (1.5,0) {};
			\node[vertex] (3241) at (0,-1.5) {};
			\node[vertex] (4213) at (1.5,-1.5) {};
			\node[vertex] (1423) at (0,-3) {};
			\node[vertex] (1342) at (1.5,-3) {};
			\node[vertex] (2431) at (0,-4.5) {};
			\node[vertex] (4132) at (1.5,-4.5) {};

			\node[anchor=south] at (2143) {$\begin{pmatrix}1&2&3&4\\2&1&4&3\end{pmatrix}$};
			\node[anchor=north] at (3412) {$\begin{pmatrix}1&2&3&4\\3&4&1&2\end{pmatrix}$};
			\node[anchor=west] at (4321) {$\begin{pmatrix}1&2&3&4\\4&3&2&1\end{pmatrix}$};
			\node[anchor=east] at (3124) {$\begin{pmatrix}1&2&3&4\\3&1&2&4\end{pmatrix}$};
			\node[anchor=west] at (2314) {$\begin{pmatrix}1&2&3&4\\2&3&1&4\end{pmatrix}$};
			\node[anchor=east] at (3241) {$\begin{pmatrix}1&2&3&4\\3&2&4&1\end{pmatrix}$};
			\node[anchor=west] at (4213) {$\begin{pmatrix}1&2&3&4\\4&2&1&3\end{pmatrix}$};
			\node[anchor=east] at (1423) {$\begin{pmatrix}1&2&3&4\\1&4&2&3\end{pmatrix}$};
			\node[anchor=west] at (1342) {$\begin{pmatrix}1&2&3&4\\1&3&4&2\end{pmatrix}$};
			\node[anchor=east] at (2431) {$\begin{pmatrix}1&2&3&4\\2&4&3&1\end{pmatrix}$};
			\node[anchor=west] at (4132) {$\begin{pmatrix}1&2&3&4\\4&1&3&2\end{pmatrix}$};

			\draw[edge] (2143)--(3412);
			\draw[edge] (2143)--(4321);
			\draw[edge] (4321)--(3412);
			\draw[edge] (3124)--(2314);
			\draw[edge] (3241)--(4213);
			\draw[edge] (1423)--(1342);
			\draw[edge] (2431)--(4132);
			
		\end{tikzpicture}
	\end{center}
	\caption{Commuting graph of the alternating group $\Alt{4}$ on $\set{1,2,3,4}$.}
	\label{inv smg, Figure: commuting graph of A_4}
\end{figure}
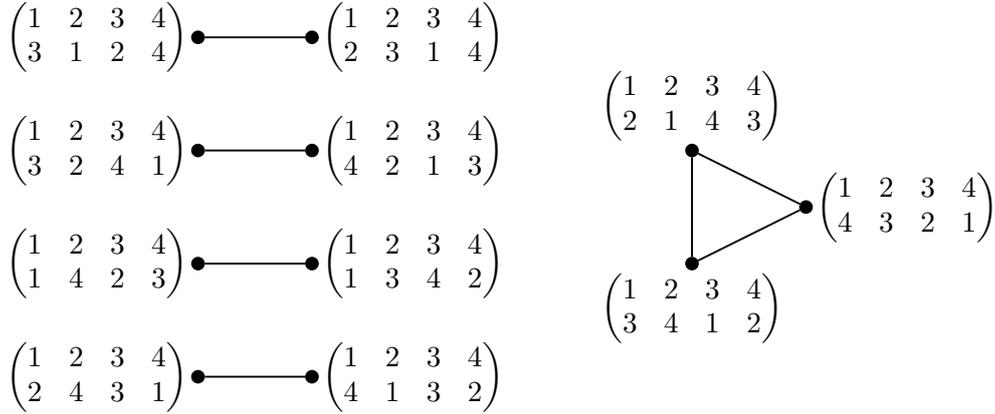

It follows from Theorem~\ref{inv smg: clique number Clifford} that all positive integers greater than $1$ arise as the clique number of some finite non-commutative inverse semigroup. That statement raises the question of whether $1$ is a possible value as well, that is, if it is possible for the commuting graph of an inverse semigroup to be a null graph --- Theorem~\ref{inv smg: clique number different from 1} shows us that it is not. We observe that the theorem also implies that $1$ is also not a possible value for the clique numbers of commuting graphs of groups and Clifford semigroups.

\begin{theorem}\label{inv smg: clique number different from 1}
	There is no finite non-commutative inverse semigroups whose commuting graph has clique number equal to $1$.
\end{theorem}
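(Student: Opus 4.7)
The plan is to argue by contradiction: suppose $\cliquenumber{\commgraph{S}}=1$, equivalently, no two distinct non-central elements of $S$ commute, and derive a contradiction. By the Vagner--Preston Theorem~\ref{preli: vagner--preston theorem inv} I may assume $S$ is an inverse subsemigroup of $\psym{X}$ for some finite set $X$, which opens up the domain/image tools from Lemma~\ref{inv smg: lemma girth inv smg}. I will split on whether or not $S$ is a Clifford semigroup.

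First I would dispatch the case in which $S$ is not a Clifford semigroup. Since $S$ is regular (inverse semigroups are regular), Theorem~\ref{preli: characterization Clifford smg} produces a non-central idempotent in $S$; part~1 of Lemma~\ref{inv smg: lemma girth inv smg} then gives some $\alpha\in S$ with $\dom\alpha\neq\im\alpha$, and part~2 shows that $\alpha\alpha^{-1}$ and $\alpha^{-1}\alpha$ are two distinct non-central idempotents. Idempotents in an inverse semigroup commute (Theorem~\ref{preli: characterization inv smg}), so $\set{\alpha\alpha^{-1},\alpha^{-1}\alpha}$ is an edge of $\commgraph{S}$, contradicting $\cliquenumber{\commgraph{S}}=1$.

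If instead $S$ is a Clifford semigroup, I would combine the Clifford axiom $xx^{-1}=x^{-1}x$ with part~1 of Lemma~\ref{inv smg: Clifford smg, lemma girth}, which says that $x$ is non-central if and only if $x^{-1}$ is. Consequently, for any $x\in S\setminus\centre{S}$ with $x\neq x^{-1}$ the pair $\set{x,x^{-1}}$ is already an edge of $\commgraph{S}$, so the assumption $\cliquenumber{\commgraph{S}}=1$ forces $x=x^{-1}$ for every non-central $x$. Picking $a,b\in S$ with $ab\neq ba$ (using non-commutativity), both $a$ and $b$ are non-central and hence self-inverse, so $a^2=aa^{-1}$ and $b^2=bb^{-1}$ are idempotents, hence central, and $a^3=a$ follows from $a\cdot aa^{-1}=aa^{-1}\cdot a=a$. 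If $ab$ is non-central, then $ab=(ab)^{-1}=b^{-1}a^{-1}=ba$, a contradiction; if $ab$ is central, then $(ab)a=a(ab)$ gives $aba=a^{2}b=ba^{2}$, and multiplying on the right by $a$ (while using $a^3=a$ and $a^2\in\centre{S}$) collapses the two sides to $ab$ and $ba$ respectively, again yielding $ab=ba$.

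The main obstacle is this last sub-case — $S$ Clifford with every non-central element self-inverse — because there the structural tools coming from the embedding into $\psym{X}$ play no role, and the contradiction must be extracted purely from $a=a^{-1}$, $b=b^{-1}$, centrality of $a^2,b^2$, and the identity $a^3=a$. Everything else is essentially a direct application of the lemmas already established for the girth results of this section.
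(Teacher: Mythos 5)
Your proof is correct. The overall decomposition matches the paper's: you split on whether $S$ is a Clifford semigroup, handle the non-Clifford case identically via the Vagner--Preston Theorem and both parts of Lemma~\ref{inv smg: lemma girth inv smg}, and in the Clifford case you make the same first reduction (if some non-central $x$ has $x\neq x^{-1}$, then $\set{x,x^{-1}}$ is already an edge by part~1 of Lemma~\ref{inv smg: Clifford smg, lemma girth} and \eqref{preli: Clifford smg, x commutes with inverse}). Where you genuinely diverge is the remaining sub-case. The paper, having only $x=x^{-1}$ and $y=y^{-1}$ for the chosen non-commuting pair, exhibits the explicit $2$-clique $\set{xy,\parens{xy}^{-1}}$, invoking part~2 of Lemma~\ref{inv smg: Clifford smg, lemma girth} to see that $xy$ is non-central --- the same device as in Case~2 of Theorem~\ref{inv smg: girth Clifford smg}. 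You instead exploit the \emph{global} reduction (every non-central element is self-inverse, so in particular $ab$ is self-inverse whenever it is non-central) and derive $ab=ba$ outright: either $ab=\parens{ab}^{-1}=b^{-1}a^{-1}=ba$, or $ab$ is central and the computation with $a^2=aa^{-1}\in\idemp{S}\subseteq\centre{S}$ and $a^3=a$ collapses $aba^2$ to $ab$ and $ba^3$ to $ba$. Your route is slightly more elementary and bypasses part~2 of Lemma~\ref{inv smg: Clifford smg, lemma girth} altogether in that sub-case; the paper's route has the mild advantage of producing an actual edge of $\commgraph{S}$ rather than a bare contradiction. Both arguments are sound.
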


\begin{proof}
	Let $S$ be a finite non-commutative inverse semigroup. Our aim is to prove that $\cliquenumber{\commgraph{S}}\geqslant 2$.
	
	\smallskip
	
	\textit{Case 1:} Suppose that $S$ is a Clifford semigroup. Due to the fact that $S$ is not commutative, there exist $x,y\in S$ such that $xy\neq yx$.
	
	\smallskip
	
	\textsc{Sub-case 1:} Assume that $x\neq x^{-1}$ or $y\neq y^{-1}$. We have, by part 1 of Lemma~\ref{inv smg: Clifford smg, lemma girth}, that $x^{-1}y\neq yx^{-1}$ and $xy^{-1}\neq y^{-1}x$, which implies that $x^{-1}, y^{-1}\in S\setminus\centre{S}$ (that is, $x^{-1}$ and $y^{-1}$ are vertices of $\commgraph{S}$). Since $S$ is a Clifford semigroup, then we have that $xx^{-1}=x^{-1}x$ and $yy^{-1}=y^{-1}y$ (by \eqref{preli: Clifford smg, x commutes with inverse}). Since $x\neq x^{-1}$ or $y\neq y^{-1}$, then we can conclude that $x$ is adjacent to $x^{-1}$ or $y$ is adjacent to $y^{-1}$. Thus $\cliquenumber{\commgraph{S}}\geqslant 2$.
	
	\smallskip
	
	\textsc{Sub-case 2:} Assume that $x=x^{-1}$ and $y=y^{-1}$. It follows from \eqref{preli: inv smg, inverse product} that $xy\neq yx =y^{-1}x^{-1}=\parens{xy}^{-1}$, and it follows from \eqref{preli: Clifford smg, x commutes with inverse} that $\parens{xy}\parens{xy}^{-1}=\parens{xy}^{-1}\parens{xy}$. Additionally, part 2 of Lemma~\ref{inv smg: Clifford smg, lemma girth} allows us to conclude that $x\parens{xy}\neq \parens{xy}x$ and, consequently, part 1 of Lemma~\ref{inv smg: Clifford smg, lemma girth} ensures that we also have $x\parens{xy}^{-1}\neq \parens{xy}^{-1}x$. Hence $xy,\parens{xy}^{-1}\in S\setminus\centre{S}$ and $xy$ is adjacent to $\parens{xy}^{-1}$. Therefore $\cliquenumber{\commgraph{S}}\geqslant 2$.
	
	\smallskip
	
	\textit{Case 2:} Suppose that $S$ is not a Clifford semigroup. Since $S$ is an inverse semigroup, we can assume, by the Vagner--Preston Theorem~\ref{preli: vagner--preston theorem inv}, that $S$ is an inverse subsemigroup of $\psym{X}$, for some finite set $X$. Additionally, we know that $S$ contains a non-central idempotent (because $S$ is not a Clifford semigroup) and, if we use part 1 of Lemma~\ref{inv smg: lemma girth inv smg}, followed by part 2, we can conclude that there exists $\alpha\in S$ such that $\alpha\alpha^{-1}\neq\alpha^{-1}\alpha$ and $\alpha\alpha^{-1},\alpha^{-1}\alpha\in S\setminus\centre{S}$. Moreover, $\alpha\alpha^{-1}$ and $\alpha^{-1}\alpha$ commute since they are idempotents. Therefore, they are adjacent vertices of $\commgraph{S}$ and, consequently, $\cliquenumber{\commgraph{S}}\geqslant 2$.
\end{proof}


\section{Possible values for the chromatic number of commuting graphs}\label{sec: chromatic number inv smg}

The aim of this section is to determine which positive integers arise as the chromatic number of the commuting graph of some group/Clifford semigroup/inverse semigroup/completely regular semigroup. In \cite[Corollary 4.4]{Completely_simple_semigroups_paper} the present author proved that the set of possible values for the chromatic number of the commuting graph of a completely simple semigroup is $\mathbb{N}$. Hence we can conclude that $\mathbb{N}$ is also the set of possible values for the chromatic number of the commuting graph of a completely regular semigroup. This means we only need to study this problem for groups, Clifford semigroups and inverse semigroups.

These results rely on two results that were obtained in \cite{Semigroup_constructions_paper} and which we present below.

\begin{corollary}\cite[Theorem 3.2 and Corollary 3.5]{Semigroup_constructions_paper}\label{0Union: chromatic number}
	Let $n\in\mathbb{N}\setminus\set{1}$ and let $S$ be the zero-union of the finite semigroups $S_1,\ldots,S_n$. Let $\NC=\{i\in\Xn: S_i \text{ is not}\allowbreak \text{commutative}\}$ and suppose that $\NC\neq\emptyset$. We have that $\chromaticnumber{\commgraph{S}}=\sum_{i\in\NC}\chromaticnumber{\commgraph{S_i}}$.
\end{corollary}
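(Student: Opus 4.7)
The plan is to identify $\commgraph{S}$ as the graph-theoretic join of the commuting graphs of the non-commutative factors, and then invoke the classical identity that the chromatic number of a join is the sum of the chromatic numbers of its components.

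First I would compute $\centre{S}$ by a straightforward case analysis, essentially the same argument used in Proposition~\ref{inv smg: zero-union Clifford smg is Clifford smg}. Since products across different summands vanish (and any product involving $0$ equals $0$), one sees that $0$ is central, every element of a commutative factor $S_j$ lies in $\centre{S}$, and for $i\in\NC$ an element $x\in S_i$ is central in $S$ if and only if $xy=yx$ holds for every $y\in S_i$, i.e.\ $x\in\centre{S_i}$. Hence the vertex set of $\commgraph{S}$ is the disjoint union $\bigcup_{i\in\NC}(S_i\setminus\centre{S_i})$.

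Next I would pin down the edges. If $x\in S_i\setminus\centre{S_i}$ and $y\in S_j\setminus\centre{S_j}$ are distinct vertices with $i,j\in\NC$ and $i\neq j$, then $xy=0=yx$, so they are adjacent; if $i=j$, then $x$ and $y$ are adjacent in $\commgraph{S}$ exactly when they are adjacent in $\commgraph{S_i}$. Thus $\commgraph{S}$ is precisely the join $\bignabla_{i\in\NC}\commgraph{S_i}$: it consists of one induced copy of each $\commgraph{S_i}$ together with every possible edge between distinct copies.

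Finally I would invoke the well-known fact that $\chromaticnumber{G_1\graphjointwo G_2}=\chromaticnumber{G_1}+\chromaticnumber{G_2}$, iterated over $\NC$. The upper bound is obtained by assigning disjoint palettes to the different copies and combining optimal colourings on each. The lower bound follows because every vertex of one copy is adjacent to every vertex of another, which forces the palettes used on distinct copies in any proper colouring of the join to be pairwise disjoint, so the total number of colours is at least $\sum_{i\in\NC}\chromaticnumber{\commgraph{S_i}}$. Applying this yields the claimed formula. The only non-routine step is the centre computation; once the vertex-set and adjacency structure of $\commgraph{S}$ are in hand, the result is a purely graph-theoretic consequence of the join decomposition.
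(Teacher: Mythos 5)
The paper gives no proof of this corollary --- it is imported verbatim from \cite[Theorem 3.2 and Corollary 3.5]{Semigroup_constructions_paper} --- so there is nothing internal to compare against; judged on its own, your argument is correct and complete. Your centre computation, the identification of $\commgraph{S}$ as the join of the graphs $\commgraph{S_i}$ for $i\in\NC$, and the additivity of the chromatic number under joins are all sound, and this is almost certainly the route the cited source takes (its Theorem 3.2 evidently establishes the join decomposition from which both the clique-number and chromatic-number corollaries follow).
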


\begin{theorem}\label{direct prod: chromatic number}
	Let $n\in\mathbb{N}\setminus\set{1}$ and let $S$ be the direct product of the finite semigroups $S_1,\ldots,S_n$. Let $C=\gset{i\in\Xn}{S_i \text{ is commutative}}$ and let $\NC=\gset{i\in\Xn}{S_i \text{ is not commutative}}$ and suppose that $\NC\neq\emptyset$. We have that
	\begin{displaymath}
		\chromaticnumber{\commgraph{S}}\leqslant\parens[\bigg]{\,\prod_{i\in C}\abs{S_i}}\parens[\bigg]{\,\prod_{i\in \NC}{\parens[\big]{\abs{\centre{S_i}}+\chromaticnumber{\commgraph{S_i}}}}} -\prod_{i=1}^n{\abs{\centre{S_i}}}.
	\end{displaymath}
\end{theorem}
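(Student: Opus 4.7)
The plan is to build a proper coloring of $\commgraph{S}$ componentwise from proper colorings of each $\commgraph{S_i}$, being careful to treat central elements separately so that the count of unused colors comes out exactly to $\prod_i\abs{\centre{S_i}}$. I recall that for a direct product $\centre{S}=\prod_i \centre{S_i}$, and that two tuples $\parens{x_1,\ldots,x_n}$ and $\parens{y_1,\ldots,y_n}$ commute in $S$ if and only if $x_iy_i=y_ix_i$ holds in every coordinate.

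First I would set up the palette of colors. For each $i\in C$, put $A_i=S_i$ (so $\abs{A_i}=\abs{S_i}=\abs{\centre{S_i}}$) and let $c_i\colon S_i\to A_i$ be the identity. For each $i\in \NC$, fix a proper coloring $f_i\colon S_i\setminus\centre{S_i}\to\set{1,\ldots,\chromaticnumber{\commgraph{S_i}}}$ and take $A_i=\centre{S_i}\sqcup\set{1,\ldots,\chromaticnumber{\commgraph{S_i}}}$ (a disjoint union), so $\abs{A_i}=\abs{\centre{S_i}}+\chromaticnumber{\commgraph{S_i}}$; then define $c_i\colon S_i\to A_i$ by $c_i(z)=z$ if $z\in\centre{S_i}$ and $c_i(x)=f_i(x)$ otherwise. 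Finally, define the product coloring $c\colon S\to\prod_{i=1}^n A_i$ by $c\parens{x_1,\ldots,x_n}=\parens{c_1(x_1),\ldots,c_n(x_n)}$.

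Next I would verify that $c$, restricted to $S\setminus\centre{S}$, is a proper coloring of $\commgraph{S}$. Suppose two distinct vertices $\parens{x_1,\ldots,x_n}$ and $\parens{y_1,\ldots,y_n}$ share the same color. Coordinate by coordinate: for $i\in C$ we immediately get $x_i=y_i$ (and in particular $x_iy_i=y_ix_i$); for $i\in\NC$, since the two halves of $A_i$ are disjoint, either both $x_i,y_i\in\centre{S_i}$ (whence $x_i=y_i$) or both $x_i,y_i\notin\centre{S_i}$ with $f_i(x_i)=f_i(y_i)$, in which case either $x_i=y_i$ or $x_i$ and $y_i$ are non-adjacent vertices of $\commgraph{S_i}$ --- that is, $x_iy_i\neq y_ix_i$. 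Because the two tuples are distinct, at least one coordinate has $x_i\neq y_i$, and by the analysis that coordinate must lie in $\NC$ and witness $x_iy_i\neq y_ix_i$. Hence the two tuples do not commute, so they are not adjacent in $\commgraph{S}$.

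Finally I would count the colors actually used. The total palette $\prod_{i=1}^n A_i$ has size $\parens[\big]{\prod_{i\in C}\abs{S_i}}\parens[\big]{\prod_{i\in \NC}\parens{\abs{\centre{S_i}}+\chromaticnumber{\commgraph{S_i}}}}$. A color $\parens{a_1,\ldots,a_n}$ with $a_i\in\centre{S_i}$ for all $i$ can only appear as $c$ of an element of $\prod_i\centre{S_i}=\centre{S}$, which contains no vertex of $\commgraph{S}$; so these $\prod_{i=1}^n\abs{\centre{S_i}}$ colors are unused, and the bound follows. There is no serious obstacle here; the only delicate point is the disjoint-union construction in the $\NC$ coordinates, which is precisely what prevents a central element and a non-central element with the same $f_i$-color from colliding and thereby undercounting the unused colors.
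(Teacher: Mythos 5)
Your proof is correct. Note that the paper does not actually prove this statement: Theorem~\ref{direct prod: chromatic number} is one of the two results imported from \cite{Semigroup_constructions_paper} (``These results rely on two results that were obtained in \cite{Semigroup_constructions_paper} and which we present below''), so there is no in-paper argument to compare against. Your construction --- the product coloring with palette $A_i=S_i$ for $i\in C$ and $A_i=\centre{S_i}\sqcup\set{1,\ldots,\chromaticnumber{\commgraph{S_i}}}$ for $i\in\NC$, followed by discarding the $\prod_{i=1}^n\abs{\centre{S_i}}$ ``all-central'' colours, whose only possible preimages lie in $\centre{S}=\prod_i\centre{S_i}$ --- is the natural one, and you handle the one genuinely delicate point (keeping the central colours disjoint from the $f_i$-colours so the unused-colour count is exact, including the degenerate case $\centre{S_i}=\emptyset$) correctly.
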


We start by proving that for each even number there is a group whose commuting graph has chromatic number equal to that even number.

\begin{theorem}\label{inv smg: chromatic number groups}
	For each $n\in\mathbb{N}$, there is a (finite non-abelian) group whose commuting graph has chromatic number equal to $2n$.
\end{theorem}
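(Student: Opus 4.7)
The strategy exactly parallels the proof of Theorem~\ref{inv smg: clique number groups}. I would again take $G=\Sym{3}\times C_n$, which is a finite non-abelian group (non-abelian because $\Sym{3}$ is). Applying Theorem~\ref{direct prod: chromatic number} with $\Sym{3}$ as the non-commutative factor and $C_n$ as the commutative one, and using $\abs{\centre{\Sym{3}}}=1$, $\abs{\centre{C_n}}=\abs{C_n}=n$, together with $\chromaticnumber{\commgraph{\Sym{3}}}=2$ (evident from Figure~\ref{inv smg, Figure: commuting graph of S_3}, where $\commgraph{\Sym{3}}$ is a single edge plus three isolated vertices, so two colours clearly suffice and at least two are needed), I would obtain the upper bound
\begin{displaymath}
    \chromaticnumber{\commgraph{\Sym{3}\times C_n}} \leqslant \parens[\big]{\abs{\centre{\Sym{3}}}+\chromaticnumber{\commgraph{\Sym{3}}}}\cdot\abs{C_n} - \abs{\centre{\Sym{3}}}\cdot\abs{\centre{C_n}} = 3n - n = 2n.
\end{displaymath}

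For the matching lower bound, I would invoke the elementary graph-theoretic fact that $\chromaticnumber{H}\geqslant\cliquenumber{H}$, valid for any simple graph $H$, since the vertices of a maximum clique must all receive pairwise distinct colours. Theorem~\ref{inv smg: clique number groups} has already established $\cliquenumber{\commgraph{\Sym{3}\times C_n}}=2n$, and this immediately yields $\chromaticnumber{\commgraph{\Sym{3}\times C_n}}\geqslant 2n$. Combining the two inequalities gives $\chromaticnumber{\commgraph{\Sym{3}\times C_n}}=2n$.

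The delicate point is not any single computation but the realization that Theorem~\ref{direct prod: chromatic number} supplies only an inequality, so in general the direct-product bound need not be tight. What makes the strategy work is the fortunate fact that $\Sym{3}$ satisfies $\chromaticnumber{\commgraph{\Sym{3}}}=\cliquenumber{\commgraph{\Sym{3}}}=2$, and $C_n$ is abelian (contributing the same factor on the chromatic side as on the clique side); as a result, the clique lower bound already reaches the chromatic upper bound, and no separate ad hoc colouring or sharper estimate is required.
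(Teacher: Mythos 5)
Your proposal is correct and follows essentially the same route as the paper's own proof: the same group $\Sym{3}\times C_n$, the same upper bound via Theorem~\ref{direct prod: chromatic number}, and the same lower bound via $\chromaticnumber{\commgraph{\Sym{3}\times C_n}}\geqslant\cliquenumber{\commgraph{\Sym{3}\times C_n}}=2n$ from Theorem~\ref{inv smg: clique number groups}. Your closing remark about why the inequality from the direct-product theorem suffices here is exactly the point the paper relies on.
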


\begin{proof}
	Let $n\in\mathbb{N}$. Just as in the proof of Theorem~\ref{inv smg: clique number groups}, we consider the group $\Sym{3}\times C_n$. In Figure~\ref{inv smg, Figure: commuting graph of S_3} we can easily verify that $\chromaticnumber{\commgraph{\Sym{3}}}=2$. Moreover, $\abs{\centre{\Sym{3}}}=1$ and $\abs{\centre{C_n}}=\abs{C_n}=n$. Then it follows from Theorem~\ref{direct prod: chromatic number} that
	\begin{align*}
		\chromaticnumber{\commgraph{\Sym{3}\times C_n}}&\leqslant\parens[\big]{\abs{\centre{\Sym{3}}}+\chromaticnumber{\commgraph{\Sym{3}}}}\cdot\abs{C_n}-\parens{\abs{\centre{\Sym{3}}}\cdot\abs{\centre{C_n}}}\\
		&=3n-n\\
		&=2n.
	\end{align*}
	Furthermore, the clique number is a lower bound for the chromatic number. Thus, by the proof of Theorem~\ref{inv smg: clique number groups}, we also have $\chromaticnumber{\commgraph{\Sym{3}\times C_n}}\geqslant \cliquenumber{\commgraph{\Sym{3}\times C_n}}=2n$, which concludes the proof.
\end{proof}

In Theorem~\ref{inv smg: chromatic number Clifford} we will see that every positive integer greater than $1$ is the chromatic number of the commuting graph of some Clifford semigroup. Since Clifford semigroups are inverse and completely regular, then this result also holds for inverse and completely regular semigroups. Moreover, Corollary~\ref{inv smg: chromatic number different from 1} guarantees that $1$ is not a possible value for the chromatic number of the commuting graph of any inverse semigroup (and, consequently, Clifford semigroup and group).

\begin{theorem}\label{inv smg: chromatic number Clifford}
	For each $n\in\mathbb{N}$ such that $n\geqslant 2$, there is a (finite non-commutative) Clifford semigroup whose commuting graph has chromatic number equal to $n$.
\end{theorem}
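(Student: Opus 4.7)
The strategy mirrors the proof of Theorem~\ref{inv smg: clique number Clifford}, replacing the clique number by the chromatic number throughout. I would split into three cases according to the parity and size of $n$.

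First, if $n$ is even, apply Theorem~\ref{inv smg: chromatic number groups} directly: it produces a finite non-abelian group $G$ with $\chromaticnumber{\commgraph{G}} = n$, and every group is a Clifford semigroup, so we are done. Second, for $n = 3$, take $S = \Alt{4}$ (a group, hence a Clifford semigroup). From Figure~\ref{inv smg, Figure: commuting graph of A_4} one sees that $\commgraph{\Alt{4}}$ consists of three disjoint edges together with one triangle, so it is properly $3$-colourable and contains $K_3$; thus $\chromaticnumber{\commgraph{\Alt{4}}} = 3$.

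Third, for odd $n \geqslant 5$, write $n = 2k+1$ with $k \geqslant 2$ and let $S$ be the zero-union of $\Alt{4}$ with $k-1$ disjoint copies of $\Sym{3}$. By Proposition~\ref{inv smg: zero-union Clifford smg is Clifford smg} (all summands are Clifford) $S$ is a Clifford semigroup, and it is non-commutative because its summands are (cf.\ the argument in Theorem~\ref{inv smg: clique number Clifford} using \cite[Proposition 3.1]{Semigroup_constructions_paper}). From Figure~\ref{inv smg, Figure: commuting graph of S_3} we have $\chromaticnumber{\commgraph{\Sym{3}}} = 2$, and from the previous paragraph $\chromaticnumber{\commgraph{\Alt{4}}} = 3$. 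Applying Corollary~\ref{0Union: chromatic number},
\begin{displaymath}
    \chromaticnumber{\commgraph{S}} = \chromaticnumber{\commgraph{\Alt{4}}} + (k-1)\cdot\chromaticnumber{\commgraph{\Sym{3}}} = 3 + 2(k-1) = 2k+1 = n,
\end{displaymath}
as required.

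There is no real obstacle: the only things to check carefully are that $\chromaticnumber{\commgraph{\Alt{4}}} = 3$ (reading off from Figure~\ref{inv smg, Figure: commuting graph of A_4}) and that every hypothesis of Proposition~\ref{inv smg: zero-union Clifford smg is Clifford smg} and Corollary~\ref{0Union: chromatic number} is satisfied by the zero-union. The proof essentially reuses, for the chromatic number, the same ingredients — Clifford-preservation under zero-union plus the additive behaviour of $\chi$ on zero-unions — that the clique-number proof used.
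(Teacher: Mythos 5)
Your proposal is correct and follows essentially the same route as the paper's own proof: even $n$ via Theorem~\ref{inv smg: chromatic number groups}, $n=3$ via $\mathcal{A}_4$, and odd $n\geqslant 5$ via the zero-union of $\mathcal{A}_4$ with $k-1$ copies of $\mathcal{S}_3$ combined with Corollary~\ref{0Union: chromatic number} and Proposition~\ref{inv smg: zero-union Clifford smg is Clifford smg}. No gaps to report.
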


\begin{proof}
	Assume that $n$ is even. It follows from Theorem~\ref{inv smg: chromatic number groups} that there exists a finite non-abelian group (and, consequently, a Clifford semigroup) $G$ such that $\chromaticnumber{\commgraph{G}}=n$.
	
	Assume that $n=3$. We have that $\Alt{4}$ is a Clifford semigroup (because it is a group) and it is straightforward from Figure~\ref{inv smg, Figure: commuting graph of A_4} that $\chromaticnumber{\commgraph{\Alt{4}}}=3=n$.
	
	Now assume that $n$ is odd and that $n\geqslant 5$. Let $k\in\mathbb{N}$ be such that $n=2k+1$. Then $k\geqslant 2$. Let $S$ be the zero-union of $\Alt{4}$ with $k-1$ copies of $\Sym{3}$. Due to the fact that $\Alt{4}$ and $\Sym{3}$ are Clifford semigroups (because they are groups), and by Proposition~\ref{inv smg: zero-union Clifford smg is Clifford smg}, we have that $S$ is a Clifford semigroup. In addition, $S$ is not commutative because $\Alt{4}$ and $\Sym{3}$ are not commutative (see \cite[Proposition 3.1]{Semigroup_constructions_paper}). Furthermore, we have $\chromaticnumber{\commgraph{\Sym{3}}}=2$ (which can be verified in Figure~\ref{inv smg, Figure: commuting graph of S_3}) and $\chromaticnumber{\commgraph{\Alt{4}}}=3$ (which can be verified in Figure~\ref{inv smg, Figure: commuting graph of A_4}). Thus Corollary~\ref{0Union: chromatic number} guarantees that \begin{displaymath}
		\chromaticnumber{\commgraph{S}}=\chromaticnumber{\commgraph{\Alt{4}}}+\parens{k-1}\cdot\chromaticnumber{\commgraph{\Sym{3}}}=3 + \parens{k-1}\cdot 2=2k+1=n. \qedhere
	\end{displaymath}
\end{proof}

\begin{corollary}\label{inv smg: chromatic number different from 1}
	There is no finite non-commutative inverse semigroups whose commuting graph has chromatic number equal to $1$.
\end{corollary}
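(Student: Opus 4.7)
The plan is to deduce this directly from Theorem~\ref{inv smg: clique number different from 1} using the standard inequality $\chromaticnumber{G}\geqslant\cliquenumber{G}$ that holds for every simple graph. The point is that a clique of size $k$ forces $k$ pairwise distinct colours in any proper colouring, so the chromatic number is bounded below by the clique number.

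Concretely, let $S$ be a finite non-commutative inverse semigroup. By Theorem~\ref{inv smg: clique number different from 1} we have $\cliquenumber{\commgraph{S}}\geqslant 2$, so there exist two distinct vertices $x,y\in S\setminus\centre{S}$ with $xy=yx$, giving an edge of $\commgraph{S}$. Any proper colouring of $\commgraph{S}$ must assign different colours to $x$ and $y$, so at least two colours are needed; that is, $\chromaticnumber{\commgraph{S}}\geqslant 2$. Hence $\chromaticnumber{\commgraph{S}}\neq 1$, which is exactly the statement of the corollary.

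There is essentially no obstacle here: the argument is a one-line consequence of Theorem~\ref{inv smg: clique number different from 1} combined with $\chromaticnumber{G}\geqslant\cliquenumber{G}$. The only thing to be mindful of is that the commuting graph of a finite non-commutative semigroup has a non-empty vertex set by definition (since $S\setminus\centre{S}\neq\emptyset$), so the inequality $\chromaticnumber{\commgraph{S}}\geqslant\cliquenumber{\commgraph{S}}\geqslant 2$ is meaningful and the chromatic number is well defined.
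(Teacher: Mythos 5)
Your argument is correct and is exactly the paper's proof: the corollary is deduced from Theorem~\ref{inv smg: clique number different from 1} together with the standard inequality that the clique number is a lower bound for the chromatic number. The extra details you supply (extracting an edge from a clique of size two) are fine but not needed beyond citing that inequality.
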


\begin{proof}
	The result is an immediate consequence of Theorem~\ref{inv smg: clique number different from 1} and the fact that the clique number of a simple graph is a lower bound for its chromatic number.
\end{proof}

\section{Possible values for the knit degree}\label{sec: knit degree inv smg}

In \cite[Corollary 4.5]{Completely_simple_semigroups_paper} the present author showed that the commuting graphs of groups have no left paths. The first result we prove in this section states that the commuting graphs of Clifford semigroups also have no left paths. Furthermore, we will see that the commuting graphs of inverse semigroups can have left paths and that $1$ is a possible value for the knit degrees of inverse semigroups. Finally, we will see that, if the commuting graph of a (finite non-commutative) completely regular semigroup has at least one left path, then the knit degree of that semigroup must be at least $2$.

\begin{theorem}\label{inv smg: knit degree}
	Let $S$ be a finite non-commutative Clifford semigroup. Then $\commgraph{S}$ contains no left paths.
\end{theorem}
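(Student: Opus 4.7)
My plan is to derive a contradiction from the mere existence of the endpoints of a left path; the intermediate vertices will play no role. Suppose $x_1-x_2-\cdots-x_n$ is a left path in $\commgraph{S}$, so by definition $x_1 \neq x_n$ and $x_1 x_i = x_n x_i$ for every $i \in \set{1,\ldots,n}$. Specializing to $i=1$ and $i=n$ already gives the two identities $x_1^2 = x_n x_1$ and $x_1 x_n = x_n^2$, and these alone will suffice.

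Write $e_1 = x_1 x_1^{-1}$ and $e_n = x_n x_n^{-1}$; both are idempotents and therefore central by Theorem~\ref{preli: characterization Clifford smg}. Right-multiplying $x_1 x_n = x_n^2$ by $x_n^{-1}$ collapses the right-hand side using \eqref{preli: Clifford smg, x commutes with inverse} and \eqref{preli: Clifford smg, regular}, yielding $x_1 e_n = x_n$; the symmetric manipulation of $x_1^2 = x_n x_1$ gives $x_n e_1 = x_1$. Substituting each of these into the other and using that $e_1$ and $e_n$ commute then produces $x_n = x_n(e_1 e_n)$ and $x_1 = x_1(e_1 e_n)$. Left-multiplying the first by $x_n^{-1}$ simplifies both sides (using $e_n^2 = e_n$ on the right and the same two Clifford identities on the left) to give $e_n = e_1 e_n$; symmetrically $e_1 = e_1 e_n$; hence $e_1 = e_n$.

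With the two idempotents identified, $x_1 = x_n e_1 = x_n e_n = x_n$ --- the last equality coming from centrality of $e_n$ together with \eqref{preli: Clifford smg, regular} --- which contradicts $x_1 \neq x_n$. The only step that takes any care is the identification $e_1 = e_n$; conceptually this reflects the fact that a Clifford semigroup is a semilattice of groups (Theorem~\ref{preli: comp reg, semillatice comp simple} together with Theorem~\ref{preli: characterization Clifford smg}), and the two endpoint identities force $x_1$ and $x_n$ to lie in a common maximal subgroup, inside which they are cancellable. Once that identification is in place, the contradiction is immediate.
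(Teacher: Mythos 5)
Your proof is correct and takes essentially the same route as the paper's: both arguments use only the two endpoint identities $x_1^2=x_nx_1$ and $x_1x_n=x_n^2$ together with the Clifford axioms \eqref{preli: Clifford smg, regular}, \eqref{preli: Clifford smg, x commutes with inverse} and centrality of idempotents to force $x_1=x_n$, contradicting the definition of a left path. The paper carries this out as one uninterrupted chain of equalities, whereas you factor it through the intermediate identification $x_1x_1^{-1}=x_nx_n^{-1}$; the underlying computation is the same.
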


\begin{proof}
	Suppose, with the aim of obtaining a contradiction, that $\commgraph{S}$ contains left paths. Let $x_1-x_2-\cdots-x_n$ be a left path in $\commgraph{G}$. Then we have
	\begin{align*}
		x_1&=x_1x_1^{-1}x_1 &\bracks{\text{by \eqref{preli: Clifford smg, regular}}} \\
		&=\parens{x_1x_1^{-1}}x_1 &\bracks{\text{rearranging parentheses}} \\
		&=\parens{x_1^{-1}x_1}x_1 &\bracks{\text{by \eqref{preli: Clifford smg, x commutes with inverse}}} \\
		&=x_1^{-1}\parens{x_1x_1} &\bracks{\text{rearranging parentheses}} \\
		&=x_1^{-1}\parens{x_nx_1} &\bracks{\text{because } x_1x_1=x_nx_1} \\
		&=x_1^{-1}\parens{\parens{x_nx_n^{-1}x_n}x_1} &\bracks{\text{by \eqref{preli: Clifford smg, regular}}} \\
		&=x_1^{-1}\parens{x_nx_n^{-1}}x_nx_1 &\bracks{\text{rearranging parentheses}} \\
		&=x_1^{-1}x_nx_1\parens{x_nx_n^{-1}} &\bracks{\text{because } x_nx_n^{-1}\in\idemp{S}\cap\centre{S}}\\
		&=x_1^{-1}\parens{x_nx_1}x_nx_n^{-1} &\bracks{\text{rearranging parentheses}}\\
		&=x_1^{-1}\parens{x_1x_1}x_nx_n^{-1} &\bracks{\text{because } x_1x_1=x_nx_1} \\
		&=\parens{x_1^{-1}x_1}x_1x_nx_n^{-1} &\bracks{\text{rearranging parentheses}} \\
		&=\parens{x_1x_1^{-1}}x_1x_nx_n^{-1} &\bracks{\text{by \eqref{preli: Clifford smg, x commutes with inverse}}} \\
		&=\parens{x_1x_1^{-1}x_1}x_nx_n^{-1} &\bracks{\text{rearranging parentheses}} \\
		&=\parens{x_1x_n}x_n^{-1} &\bracks{\text{by \eqref{preli: Clifford smg, regular}}}\\
		&=\parens{x_nx_n}x_n^{-1} &\bracks{\text{because } x_1x_n=x_nx_n} \\
		&=x_n\parens{x_nx_n^{-1}} &\bracks{\text{rearranging parentheses}} \\
		&=x_n\parens{x_n^{-1}x_n} &\bracks{\text{by \eqref{preli: Clifford smg, x commutes with inverse}}} \\
		&=x_n, &\bracks{\text{by \eqref{preli: Clifford smg, regular}}}
	\end{align*}
	which is a contradiction. Thus $\commgraph{S}$ contains no left paths.
\end{proof}

Even though commuting graphs of Clifford semigroups contain no left paths (that is, there are no possible values for the knit degree of Clifford semigroups), there exist inverse semigroups whose commuting graphs have left paths. In the next  proposition we will see that, for a finite set $X$ such that $\abs{X}\geqslant 3$, the commuting graph of the symmetric inverse semigroup $\psym{X}$ on $X$ has left paths. We note that $\psym{X}$ is not commutative if and only if $\abs{X}\geqslant 2$ and, consequently, the commuting graph of $\psym{X}$ is only defined when $\abs{X}\geqslant 2$.

\begin{proposition}\label{P(X): knit degree P(X) I(X)}
	Suppose that $\abs{X}\geqslant 2$. We have that $\commgraph{\psym{X}}$ contains left paths if and only if $\abs{X}\geqslant 3$, in which case $\knitdegree{\psym{X}}=1$.
\end{proposition}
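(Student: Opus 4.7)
The plan is to handle the two directions of the biconditional separately. For $\abs{X}\geqslant 3$ I will exhibit an explicit left path of length $1$, which, since any path between distinct vertices has length at least $1$, immediately gives $\knitdegree{\psym{X}}=1$. For $\abs{X}=2$ I will rule out left paths altogether by a direct analysis of the seven-element semigroup $\psym{\set{1,2}}$.

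To produce the length-$1$ left path when $\abs{X}\geqslant 3$, I first unpack what is required: vertices $x_1,x_2\in\psym{X}\setminus\centre{\psym{X}}$ with $x_1\neq x_2$, $x_1x_2=x_2x_1$ (they are adjacent), $x_1^2=x_2x_1$, and $x_1x_2=x_2^2$. Together these force all four products $x_ix_j$ to coincide, suggesting I take an idempotent $e$ absorbing some ``square root'' $\alpha$ of itself on both sides. Fix three distinct elements $a,b,c\in X$ and set $e=\id{\set{a}}$ and $\alpha$ the partial injection with $\dom\alpha=\set{a,b}$, $(a)\alpha=a$ and $(b)\alpha=c$. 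Short domain computations give $\alpha^2=e$ (since $(b)\alpha^2$ is undefined because $c\notin\dom\alpha$) and $e\alpha=\alpha e=e$, so the four products collapse to $e$. To finish I verify that $e\neq\alpha$ (their domains differ) and that both are non-central: $e$ fails to commute with $\set{(a,b)}$, while $\alpha$ fails to commute with $\id{\set{c}}$. Hence $e-\alpha$ is a left path of length $1$.

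For the $\abs{X}=2$ case I exhaustively analyse $\psym{\set{1,2}}$. Its center is $\set{\emptyset,\id{X}}$, so the five non-central elements are $\id{\set{1}}$, $\id{\set{2}}$, $\set{(1,2)}$, $\set{(2,1)}$, and the swap $\set{(1,2),(2,1)}$. A quick check of the ten pairs --- most settled by observing that one of the two compositions produces the empty map while the other does not --- shows that the only commuting pair is the two restricted identities. This single edge fails the length-$1$ left-path condition since $\id{\set{1}}\cdot\id{\set{1}}=\id{\set{1}}\neq\emptyset=\id{\set{2}}\cdot\id{\set{1}}$, and the absence of longer paths precludes left paths of greater length, so $\commgraph{\psym{\set{1,2}}}$ has no left paths.

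The main obstacle is guessing the right example in the $\abs{X}\geqslant 3$ case: nothing a priori forces the knit degree to drop all the way to $1$ rather than to $2$ or more. Once the length-$1$ conditions are recognised as demanding a common square for $x_1$ and $x_2$, pairing an idempotent with a square root of it becomes the natural candidate, and the verification is routine. The $\abs{X}=2$ case is conceptually easy but requires tidy bookkeeping of several partial compositions.
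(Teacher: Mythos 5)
Your proof is correct and takes essentially the same approach as the paper: an explicit left path of length $1$ for $\abs{X}\geqslant 3$, and an exhaustive check (identical to the paper's, down to the single edge between the two restricted identities and the computation $\id{\set{1}}\id{\set{1}}\neq\id{\set{2}}\id{\set{1}}$) for $\abs{X}=2$. The only difference is the witness in the first part: the paper uses the two rank-one maps $x_1\mapsto x_2$ and $x_1\mapsto x_3$, all four of whose pairwise products equal the empty map, whereas you pair the idempotent $\id{\set{a}}$ with a square root $\alpha$ of it satisfying $e\alpha=\alpha e=\alpha^2=e$; both verifications are routine and correct.
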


\begin{proof}
	We divide the proof into two parts. The first part is dedicated to verifying that, when $\abs{X}=2$, the graph $\commgraph{\psym{X}}$ does not contain left paths. The second part is used to establish that, when $\abs{X}\geqslant 3$, $\commgraph{\psym{X}}$ contains at least one left path of length $1$.
	
	\medskip
	
	\textbf{Part 1.} Suppose that $\abs{X}=2$ and assume that $X=\set{x_1,x_2}$. We know that left paths must have length at least $1$. By observation of Figure~\ref{P(X), Figure: commuting graph of P_2 and I_2} we immediately conclude that the unique non-trivial path in $\commgraph{\psym{X}}$ is
	\begin{displaymath}
		\begin{pmatrix}
			x_1\\x_1
		\end{pmatrix}-\begin{pmatrix}
			x_2\\x_2
		\end{pmatrix}.
	\end{displaymath}
	However
	\begin{displaymath}
		\begin{pmatrix}
			x_1\\x_1
		\end{pmatrix}\begin{pmatrix}
			x_1\\x_1
		\end{pmatrix}
		=\begin{pmatrix}
			x_1\\x_1
		\end{pmatrix}\neq\emptyset
		=\begin{pmatrix}
			x_2\\x_2
		\end{pmatrix}\begin{pmatrix}
			x_1\\x_1
		\end{pmatrix},
	\end{displaymath}
	which implies that the path in question is not a left path. Since $\commgraph{\psym{X}}$ contain no other non-trivial paths, we can conclude that $\commgraph{\psym{X}}$ contain no left paths.
	
	\begin{figure}[hbt]
		\begin{center}
			\begin{tikzpicture}
				\node[vertex] (21) at (2,-0.5) {};
				\node[vertex] (1) at (6,0) {};
				\node[vertex] (2) at (6,-1) {};
				\node[vertex] (a) at (4,0) {};
				\node[vertex] (b) at (4,-1) {};
				
				\node[anchor=east] at (21) {$\begin{pmatrix}x_1&x_2 \\ x_2&x_1\end{pmatrix}$};
				\node[anchor=south] at (1) {$\begin{pmatrix}x_1 \\ x_1\end{pmatrix}$};
				\node[anchor=north] at (2) {$\begin{pmatrix}x_2 \\ x_2\end{pmatrix}$};
				\node[anchor=south] at (a) {$\begin{pmatrix}x_1 \\ x_2\end{pmatrix}$};
				\node[anchor=north] at (b) {$\begin{pmatrix}x_2 \\ x_1\end{pmatrix}$};
				
				\draw[edge] (1) -- (2);
				
			\end{tikzpicture}
		\end{center}
		\caption{Commuting graph of the symmetric inverse semigroup $\psym{\set{x_1,x_2}}$ over $\set{x_1,x_2}$.}
		\label{P(X), Figure: commuting graph of P_2 and I_2}
	\end{figure}
	
	\medskip
	
	\textbf{Part 2.} Suppose that $\abs{X}\geqslant 3$. Then there exist pairwise distinct $x_1,x_2,x_3\in X$. Let
	\begin{displaymath}
		\alpha=\begin{pmatrix}
			x_1\\x_2
		\end{pmatrix} \quad \text{and} \quad
		\beta=\begin{pmatrix}
			x_1\\x_3
		\end{pmatrix}.
	\end{displaymath}
	We have that $\alpha\beta=\emptyset=\beta\alpha$, which implies that $\alpha-\beta$ is a path in $\commgraph{\psym{X}}$. Additionally, we have that $\alpha\alpha=\emptyset=\beta\alpha$ and $\alpha\beta=\emptyset=\beta\beta$. Therefore $\alpha-\beta$ is a left path (of length $1$) in $\commgraph{\psym{X}}$ and, thus, $\knitdegree{\psym{X}}=1$.
\end{proof}

\begin{theorem}\label{com reg: knit degree}
	There is no finite non-commutative completely regular semigroup whose knit degree is equal to $1$.
\end{theorem}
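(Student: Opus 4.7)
The plan is to argue by contradiction using the two structure theorems for completely regular semigroups recalled in the preliminaries. Suppose $S$ is a finite non-commutative completely regular semigroup with $\knitdegree{S} = 1$; then $\commgraph{S}$ contains a left path of length one, namely a pair of distinct vertices $x_1, x_2 \in S \setminus \centre{S}$ that are adjacent (so $x_1 x_2 = x_2 x_1$) and that satisfy the left-path conditions $x_1 x_1 = x_2 x_1$ and $x_1 x_2 = x_2 x_2$. My goal is to drive these relations to the conclusion $x_1 = x_2$.

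I would first reduce to the case where $x_1$ and $x_2$ lie in a common completely simple component. By Theorem~\ref{preli: comp reg, semillatice comp simple}, write $S$ as a semilattice $Y$ of completely simple subsemigroups $\set{S_\alpha}_{\alpha \in Y}$, and let $\alpha, \beta \in Y$ be such that $x_1 \in S_\alpha$ and $x_2 \in S_\beta$. Then $x_1^2 \in S_\alpha$ while $x_2 x_1 \in S_{\alpha \sqcap \beta}$; the equality $x_1^2 = x_2 x_1$ therefore forces $\alpha = \alpha \sqcap \beta$, i.e.\ $\alpha \leqslant \beta$. Symmetrically $x_2^2 = x_1 x_2$ yields $\beta \leqslant \alpha$, so $\alpha = \beta$ and both $x_1$ and $x_2$ lie in $S_\alpha$.

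Next I would use the Rees matrix description of $S_\alpha$. By Theorem~\ref{preli: completely simple semigroup <=> Rees matrix construction}, identify $S_\alpha$ with $\Rees{G}{I}{\Lambda}{P}$ for some group $G$, and write $x_1 = (i_1, g_1, \lambda_1)$, $x_2 = (i_2, g_2, \lambda_2)$. Computing
\begin{displaymath}
x_1 x_2 = (i_1, g_1 p_{\lambda_1 i_2} g_2, \lambda_2), \qquad x_2 x_1 = (i_2, g_2 p_{\lambda_2 i_1} g_1, \lambda_1),
\end{displaymath}
the equality $x_1 x_2 = x_2 x_1$ forces the first and third coordinates to match, so $i_1 = i_2 =: i$ and $\lambda_1 = \lambda_2 =: \lambda$ (that is, $x_1$ and $x_2$ lie in the same $\mathcal{H}$-class of $S_\alpha$). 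The left-path condition $x_1^2 = x_2 x_1$ then collapses to the middle-coordinate identity $g_1 p_{\lambda i} g_1 = g_2 p_{\lambda i} g_1$; cancelling $g_1$ and $p_{\lambda i}$ in $G$ gives $g_1 = g_2$, hence $x_1 = x_2$, the desired contradiction.

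I do not anticipate a genuine obstacle: the only step that requires care is the passage through the semilattice decomposition, which crucially depends on knowing that products of elements from distinct components land strictly lower in the semilattice, and then the Rees-matrix computation is routine because cancellation in the group $G$ does all the work at the end.
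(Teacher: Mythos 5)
Your proof is correct, and the first half (the reduction via the semilattice decomposition to a single completely simple component $S_\alpha$) is essentially identical to the paper's. Where you diverge is in how you finish: the paper splits into two cases according to whether both index sets of the Rees matrix representation of $S_\alpha$ are singletons. In the singleton case it observes $S_\alpha$ is a group and derives $x=y$ from $y^2=xy$ using the identity and inverses; in the other case it notes $\centre{S_\alpha}=\emptyset$, so $x-y$ is a left path in $\commgraph{S_\alpha}$, and it then invokes an external result (\cite[Corollary 4.5]{Completely_simple_semigroups_paper}) that finite non-commutative completely simple semigroups admit no left paths. You instead carry out a single uniform coordinate computation in $\Rees{G}{I}{\Lambda}{P}$: adjacency forces $x_1$ and $x_2$ into the same $\mathcal{H}$-class, and then $x_1^2=x_2x_1$ reduces to $g_1p_{\lambda i}g_1=g_2p_{\lambda i}g_1$, which cancellation in $G$ collapses to $g_1=g_2$. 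Your route is more self-contained and elementary --- it needs no case distinction and no citation beyond the two structure theorems already stated in the preliminaries --- while the paper's route avoids explicit Rees coordinates by delegating the hard case to a previously established theorem. Both are valid; the computations are sound (in particular the equality of the first and third coordinates really does follow from $x_1x_2=x_2x_1$, and the right-cancellation of $p_{\lambda i}g_1$ is legitimate since $G$ is a group).
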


\begin{proof}
	Assume, with the aim of obtaining a contradiction, that there exists a finite non-commutative completely regular semigroup $S$ such that $\knitdegree{S}=1$. Then $\commgraph{S}$ contains at least one left path of length $1$. Let $x-y$ be one of those left paths. We have that $x\neq y$ and $xy=yx$ and $x^2=yx$ and $y^2=xy$.
	
	It follows from Theorem~\ref{preli: comp reg, semillatice comp simple} that there exist a semilattice $Y$ and completely simple subsemigroups $S_{\alpha}$ of $S$, where $\alpha\in Y$, such that $\set{S_{\alpha}}_{\alpha\in Y}$ is a partition of $S$ and $S_{\alpha}S_{\beta}\subseteq S_{\alpha\sqcap\beta}$ for all $\alpha,\beta\in S$.
	
	Let $\alpha,\beta\in Y$ be such that $x\in S_\alpha$ and $y\in S_\beta$. We aim to prove that $\alpha=\beta$. We have that $x^2\in S_\alpha S_\alpha\subseteq S_{\alpha\sqcap\alpha}=S_\alpha$ and $yx\in S_\beta S_\alpha \subseteq S_{\beta\sqcap\alpha}$ and $y^2\in S_\beta S_\beta\subseteq S_{\beta\sqcap\beta}=S_\beta$ and $xy\in S_\alpha S_\beta\subseteq S_{\alpha\sqcap\beta}=S_{\beta\sqcap\alpha}$. Then $x^2=yx\in S_\alpha\cap S_{\beta\sqcap\alpha}$ and $y^2=xy\in S_\beta\cap S_{\beta\sqcap\alpha}$. Consequently, we have that $S_\alpha=S_{\beta\sqcap\alpha}$ and $S_\beta=S_{\beta\sqcap\alpha}$, which implies that $\alpha=\beta\sqcap\alpha=\beta$. Hence $x,y\in S_\alpha$.
	
	We have that $S_\alpha$ is a finite completely simple semigroup. Hence there exist a (finite) group $G$, (finite) index sets $I$ and $\Lambda$ and a $\Lambda\times I$ matrix $P$ whose entries are elements of $G$ such that $S_\alpha\simeq\Rees{G}{I}{\Lambda}{P}$. We have two possibilities: either $I$ and $\Lambda$ are both singletons, or at least one of $I$ and $\Lambda$ is not a singleton. We will see that both cases lead to a contradiction.
	
	\smallskip
	
	\textit{Case 1:} Assume that $I$ and $\Lambda$ are singletons. Then $\Rees{G}{I}{\Lambda}{P}\simeq G$ (see, for example, \cite[Exercise 4.1]{Nine_chapters_Cain}) and, consequently, $S_\alpha$ is a group. Let $e\in S_\alpha$ be the identity of $S_\alpha$ and let $x',y'\in S_\alpha$ be the inverses of $x$ and $y$, respectively, in $S_\alpha$. Thus, since we also have $y^2=xy$, we obtain
	\begin{displaymath}
		x=xe=x\parens{yy'}=\parens{xy}y'=y^2y'=y\parens{yy'}=ye=y,
	\end{displaymath}
	which is a contradiction.
	
	\smallskip
	
	\textit{Case 2:} Assume that $I$ is not a singleton or $\Lambda$ is not a singleton. We have that $\centre{S_\alpha}=\centre{\Rees{G}{I}{\Lambda}{P}}=\emptyset$ (see \cite[Proposition 3.1]{Completely_simple_semigroups_paper}). Hence $S_\alpha$ is not commutative and so its commuting graph is defined. Moreover, we also have that $x,y\in S_\alpha\setminus\centre{S_\alpha}$, which implies that $x-y$ is a left path in $\commgraph{S_\alpha}$. However, $S_\alpha$ is a finite non-commutative completely simple semigroup and, by \cite[Corollary 4.5]{Completely_simple_semigroups_paper}, finite non-commutative completely simple semigroups have no left paths. This means we have reached a contradiction.
	
	\smallskip
	
	Since in both cases we obtained a contradiction, then we can conclude that there is no finite non-commutative completely regular semigroup whose knit degree is equal to $1$.
\end{proof}

\section{Open problems}

In this section we discuss some problems concerning commuting graphs of groups, inverse semigroups and completely regular semigroups that remain unsolved.

We saw in Theorems~\ref{inv smg: clique number groups} and \ref{inv smg: chromatic number groups} that each even positive integer arises as the the clique number and the chromatic number of the commuting graph of some group, and we saw in Theorems~\ref{inv smg: clique number different from 1} and \ref{inv smg: chromatic number different from 1} that $1$ is not a possible value for neither of the properties. We also noted that there exist groups such that the clique/chromatic numbers of their commuting graphs is an odd number. This bring us to the first open problem:

\begin{problem}
    For each positive integer $n$, is there a (finite non-abelian) group whose commuting graph has clique/chromatic number equal to $2n+1$?
\end{problem}
	
It follows from Proposition~\ref{P(X): knit degree P(X) I(X)} that $1$ is a possible value for the knit degree of an inverse semigroup.

\begin{problem}
    Is there a (finite non-commutative) inverse semigroup whose knit degree greater than $1$?
\end{problem}

In Theorems~\ref{com reg: girth} we saw that all even integers greater than $2$ are possible values for the girth of the commuting graph of a completely regular semigroup. This leads to the following question:

\begin{problem}
    For each $n\in\mathbb{N}$ such that $n\geqslant 3$ is odd, is there a (finite non-commutative) completely regular semigroup whose commuting graph has girth equal to $n$?
\end{problem}

In \cite[Theorems 3.10 and 3.14]{Commuting_graph_T_X} it was proved that for each $n\in\mathbb{N}\setminus\set{1,3}$ there is a completely regular semigroup whose knit degree is equal to $n$. Moreover, in Theorem~\ref{com reg: knit degree} we established that $1$ is not a possible value for the knit degree of a completely regular semigroups. This leads to the following question:

\begin{problem}
    Is there a (finite non-commutative) completely regular semigroup whose knit degree is equal to $3$?
\end{problem}

We can also study this type of problems for other classes of semigroups:

\begin{problem}
    Determine the possible values for the properties of the commuting graph of a regular semigroup.
\end{problem}

    \bibliography{Bibliography} 
\bibliographystyle{alphaurl}

\end{document}